\theoremstyle{plain}
\newtheorem{theorem}{Theorem}[section]
\newtheorem{lemma}[theorem]{Lemma}
\newtheorem{corollary}[theorem]{Corollary}
\theoremstyle{definition}
\newtheorem{definition}[theorem]{Definition}
\newtheorem{assumption}[theorem]{Assumption}
\theoremstyle{remark}
\newtheorem{remark}[theorem]{Remark}
\DeclareMathOperator{\bary}{bar}
\DeclareMathOperator{\Bary}{Bar}
\DeclareMathOperator{\supp}{supp}
\DeclareMathOperator{\E}{\mathbb{E}}
\newcommand*{\rset}{\mathbb{R}}
\newcommand*{\nset}{\mathbb{N}}
\newcommand*{\qset}{\mathbb{Q}}
\newcommand*{\Prsp}[1]{\ensuremath{\mathcal{P}(#1)}}
\newcommand*{\diff}{\,d}
\newcommand*{\eqset}{\coloneqq}
\begin{document}
	\title{Fr{\'e}chet barycenters in the Monge--Kantorovich spaces}
	\thanks{$^*$\,The research was performed at IITP RAS with support of a Russian Science Foundation grant, project no. 14-50-00150.}
	\author{Alexey Kroshnin$^{1,2}$}
	\address{$^1$\,Moscow Institute of Physics and Technology}
	\address{$^2$\,Institute for Information Transmission Problems of RAS (Kharkevich Institute)}
	\email{kroshnin@phystech.edu}
	\date{2017}
	
	\begin{abstract}
		We consider the space \Prsp{X} of probability measures on arbitrary Radon space $X$ endowed with a transportation cost $J(\mu, \nu)$ generated by a nonnegative continuous cost function.
		For a probability distribution on~\Prsp{X} we formulate a notion of average with respect to this transportation cost, called here the \emph{Fr{\'e}chet barycenter}, prove a version of the law of large numbers for Fr{\'e}chet barycenters, and discuss the structure of~\Prsp{X} related to the transportation cost~$J$.
	\end{abstract}
	
	\maketitle

	\section{Introduction}\label{sec:intro}
	
	In this paper we consider averaging in the space~$\Prsp{X}$ of probability measures over a Radon space~$X$, using a transport optimization procedure to define a suitable concept of a ``typical element'', which extends the notion of \emph{Fr{\'e}chet mean}. For the first time a construction of this kind was introduced by M.~Agueh and G.~Carlier in \cite{AguehCarlier2011}: a \emph{Wasserstein barycenter} of a family of measures on the Euclidean space~$\rset^d$ is defined as the Fr{\'e}chet mean using the $2$-Wasserstein distance $W_2$ on~$\mathcal{P}(\rset^d)$, which is given by minimization of the mean-square displacement. In~\cite{AguehCarlier2011}, the authors establish existence, uniqueness, and regularity results for the Wasserstein barycenter and, when $d = 1$, provide an explicit formula for the Wasserstein barycenter in terms of quantile functions of the measures involved.
	
	Here we take a general Radon space~$X$ (e.g.\ Polish space) and a general transportation cost
	\begin{equation}
		J(\mu, \nu) = \inf \Bigl\{\int c(x, y) \diff\gamma(x, y) : \gamma \in \Prsp{X \times X},\, \pi^x_\# \gamma = \mu,\, \pi^y_\# \gamma = \nu\Bigr\} \ge 0,
	\end{equation}
	where~$c(\cdot, \cdot) \ge 0$ is a continuous \emph{cost function} that satisfies $c(x, y) = 0$ iff $x = y$. 
	
	Since $J(\mu, \nu) = 0$ iff $\mu = \nu$, this cost quantifies separation between measures $\mu$ and $\nu$ in $\Prsp{X}$ but does not necessarily satisfy the triangle inequality. Although $J(\cdot, \cdot)$ is not a metric, it generates a \emph{transportation topology} on $X$, and the space $X$ endowed with this topology is divided into equivalence classes each of which is a Radon space (in particular, the classes are separable and metrizable).
	
	Let a measure $\nu$ be fixed and $\bm\mu$ be a random element of~$\Prsp{X}$ with distribution $P_\mu$. We introduce a notion of Fr{\'e}chet typical element of~$P_\mu$ with respect to~$J(\cdot, \cdot)$, which we propose to call the \emph{Fr{\'e}chet barycenter} of~$P_\mu$. It is defined as any measure~$\nu$ for which the expected cost
	\begin{equation}
		\E J(\bm\mu, \nu) = \int_{\Prsp{X}} J(\mu, \nu) \diff P_\mu
	\end{equation}
	attains its minimum over~$\Prsp{X}$. Rigorous definitions of such a distribution and an integral are formulated in Section~\ref{sec:barycenters}. Suppose $\E J(\bm\mu, \cdot)$ is not identically equal to~$+\infty$ on~$\Prsp{X}$. Then there exists a Fr{\'e}chet barycenter of $P_\mu$. This averaging appears to be quite reasonable. Namely, if distributions $P_n$ converge to $P$ with respect to the transportation cost in $\mathcal{P}\bigl(\Prsp{X}\bigr)$ corresponding to $J(\cdot, \cdot)$ as cost function, then barycenters of $P_n$ also converge in some sense to the barycenter of $P$. For instance, this result implies a law of large numbers for Fr{\'e}chet barycenters.
	
	The paper is organized as follows.
	In Section~\ref{sec:MK_dist} we introduce some standard definitions and notations and recall properties of the Monge--Kantorovich distance.
	In Section~\ref{sec:topology} properties of the topology on~$\Prsp{X}$ induced by the Monge--Kantorovich transportation cost $J(\cdot, \cdot)$ are considered.
	In Section~\ref{sec:euclidean} we consider the particular case of $X = \rset^d$ and $c(x, y) = g(x - y)$, where $g(\cdot) \ge 0$ is a convex function.
	Then we define in Section~\ref{sec:barycenters} a generalized barycenter of distribution on $\Prsp{X}$ for general $X$. The central result of this paper is proved in Subsection~\ref{subsec:consistency}: the convergence of barycenters of distributions~$P_n$ is established provided $P_n$ themselves converge to some distribution~$P$.

	\section{The Monge--Kantorovich distance}\label{sec:MK_dist}

	\subsection{Notations}\label{subsec:notation}
	
	For a measurable space $X$ denote the space of probability measures on~$X$ by \Prsp{X}. In particular, if the space $X$ is topological, we assume it is endowed with the standard Borel $\sigma$-algebra $\mathcal{B}(X)$.
	 
	For two measurable spaces $X$, $Y$, a measurable map $T \colon X \to Y$ induces a map $T_\# \colon \Prsp{X} \to \Prsp{Y}$ given by $T_\# \mu(A) \eqset \mu\bigl(T^{-1}(A)\bigr)$ for any measurable $A \subset Y$. Recall that for any integrable function $f$
	\begin{equation}
		\int_Y f(y) \diff (T_\# \mu) = \int_X f\bigl(T(x)\bigr) \diff \mu.
	\end{equation}
	
	For a measure $\mu$ on $X$ and an integrable function $f \colon X \to \rset$ define the measure $f \lfloor \mu$ as 
	\[
	(f \lfloor \mu)(A) \eqset \int_A f(x) \diff \mu
	\] 
	for any measurable $A \subset Y$. Moreover, for any measurable set $B$ define $B \lfloor \mu \eqset \chi_B\lfloor \mu$, where $\chi_B(\cdot)$ is the characteristic function of~$B$.
	
	We will often drop the argument of function and the symbol of domain of integration if there is no risk of confusion.

	\subsection{The transport functional}\label{subsec:transp_distance}
	
	Let $(X, \rho)$ be a \emph{Radon space}, i.e.\ a separable metric space such that for every Borel probability measure $\mu \in \Prsp{X}$ and $\epsilon > 0$ there exists a compact set $D_\epsilon^\mu$ for which $\mu(X \setminus D_\epsilon^\mu) < \epsilon$ \cite[def.~5.1.4]{AmbrosioGigliSavare2008}. E.g., any Polish space is a Radon space.
	
	Fix a measurable nonnegative function $c \colon X \times X \to [0, \infty)$ and call it the \emph{cost function}. Assume $c(\cdot, \cdot)$ is continuous and \textit{consistent} in the sense that $c(x, x_n) \to 0$ iff $c(x_n, x) \to 0$ iff $x_n \to x$ for any $x \in X$, $\{x_n\}_{n \in \nset} \subset X$ (cf.\ discussion after Assumption~\ref{asm:loc_compact}). In particular, $c(x, y) = 0$ iff $x = y$.

	For two measures $\mu, \nu \in \Prsp{X}$ define the set of \emph{transport plans} taking $\mu$ to~$\nu$ as
	\begin{equation}
		\Pi(\mu, \nu) \eqset \bigl\{\gamma \in \Prsp{X \times X}: \pi^x_\# \gamma = \mu,\, \pi^y_\# \gamma = \nu\bigr\},
	\end{equation}
	where $\pi^x$ and~$\pi^y$ are the projections of~$X \times X$ to the first and second factor respectively. Observe that $\Pi(\mu, \nu)$ is always nonempty because it contains the direct product measure~$\mu \otimes \nu$. 

	The \emph{transportation cost} of a transport plan~$\gamma$ is defined as
	\begin{equation}
		K(\gamma) \eqset \int_{\rset \times \rset} c(x, y) \diff \gamma.
	\end{equation}
	It is easy to obtain that $K(\cdot)$ is lower semicontinuous with respect to the weak convergence of measures. The \emph{Monge--Kantorovich problem} for given $\mu, \nu \in \mathcal{P}(X)$ consists in minimizing the transportation cost $K(\gamma)$ over all $\gamma \in \Pi(\mu, \nu)$. Accordingly, the \emph{Monge--Kantorovich distance} (or \emph{transportation functional}) between $\mu$ and $\nu$ is the infimum of transportation costs:
	\begin{equation}
		J(\mu, \nu) \eqset \inf_{\gamma \in \Pi(\mu, \nu)} K(\gamma).
	\end{equation}
	 A transport plan $\gamma^*$ is called \emph{optimal} if $K(\cdot)$ attains its minimum over $\Pi(\mu, \nu)$ at~$\gamma^*$.
	
	\begin{theorem}\label{thm:optimal_plan}
		For any $\mu, \nu \in \Prsp{X}$ there exists an optimal transport plan.
	\end{theorem}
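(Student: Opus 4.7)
The plan is to apply the direct method of the calculus of variations: pick a minimizing sequence of transport plans, use tightness to extract a weakly convergent subsequence via Prokhorov's theorem, and then invoke the weak closedness of $\Pi(\mu, \nu)$ together with the lower semicontinuity of $K$ that has already been recorded. The only nontrivial content is to verify tightness, and this is where the Radon hypothesis on $X$ enters.

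First, if $J(\mu, \nu) = +\infty$ the statement is vacuous: every $\gamma \in \Pi(\mu, \nu)$ achieves the infimum $+\infty$ and $\mu \otimes \nu$ is available. So I assume $J(\mu, \nu) < +\infty$ and pick a sequence $\gamma_n \in \Pi(\mu, \nu)$ with $K(\gamma_n) \to J(\mu, \nu)$.

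Next I show the whole set $\Pi(\mu, \nu)$ is tight. Fix $\epsilon > 0$. Because $X$ is Radon, the singletons $\{\mu\}$ and $\{\nu\}$ are tight: there exist compacts $D_\mu, D_\nu \subset X$ with $\mu(X \setminus D_\mu) < \epsilon/2$ and $\nu(X \setminus D_\nu) < \epsilon/2$. For any $\gamma \in \Pi(\mu, \nu)$, the compact set $D_\mu \times D_\nu \subset X \times X$ satisfies
\begin{equation}
\gamma\bigl((X \times X) \setminus (D_\mu \times D_\nu)\bigr)
\le \gamma\bigl((X \setminus D_\mu) \times X\bigr) + \gamma\bigl(X \times (X \setminus D_\nu)\bigr)
= \mu(X \setminus D_\mu) + \nu(X \setminus D_\nu) < \epsilon,
\end{equation}
using the marginal constraints. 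Hence $\Pi(\mu, \nu)$ is tight, and Prokhorov's theorem gives a subsequence (still denoted $\gamma_n$) with $\gamma_n \rightharpoonup \gamma^*$ weakly for some $\gamma^* \in \Prsp{X \times X}$.

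It remains to check that $\gamma^* \in \Pi(\mu, \nu)$ and that $K(\gamma^*) = J(\mu, \nu)$. The first follows because the pushforward under the continuous projections is weakly continuous, so $\pi^x_\# \gamma^* = \lim \pi^x_\# \gamma_n = \mu$ and likewise for $\pi^y$; thus $\Pi(\mu, \nu)$ is weakly closed. The second follows from the already-stated lower semicontinuity of $K$ with respect to weak convergence:
\begin{equation}
J(\mu, \nu) \le K(\gamma^*) \le \liminf_{n \to \infty} K(\gamma_n) = J(\mu, \nu),
\end{equation}
so $\gamma^*$ is optimal.

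The main obstacle is really only the tightness step, and it reduces to a one-line computation using the marginals once the Radon property is invoked to tighten $\mu$ and $\nu$ separately. All other ingredients (Prokhorov, weak continuity of pushforward under continuous maps, the lower semicontinuity of $K$) are either standard or asserted earlier in the excerpt.
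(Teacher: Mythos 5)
Your proof is correct and follows essentially the same route as the paper's: a minimizing sequence, tightness of $\Pi(\mu,\nu)$ from the Radon property of $X$ applied to $\mu$ and $\nu$ separately, Prokhorov's theorem, and lower semicontinuity of $K$ under weak convergence. The only differences are cosmetic — you spell out the weak closedness of $\Pi(\mu,\nu)$ and the trivial case $J(\mu,\nu)=+\infty$, which the paper leaves implicit.
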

	
	\begin{proof}
		Let $J(\mu, \nu) < \infty$ and $\{\gamma_n\}_{n \in \nset} \subset \Pi(\mu, \nu)$ be a minimizing sequence for $K(\cdot)$. Observe that $\mu$ and $\nu$ are tight measures as $X$ is a Radon space, hence the set of transport plans $\Pi(\mu, \nu)$ is also tight. Indeed, for any $\epsilon > 0$ there exist such compact sets $D_\mu^\epsilon$, $D_\nu^\epsilon$ that $\mu(X \setminus D_\mu^\epsilon) < \epsilon / 2$ and $\nu(X \setminus D_\nu^\epsilon) < \epsilon / 2$; then $D^\epsilon \eqset D_\mu^\epsilon \times D_\nu^\epsilon \subset X \times X$ is compact and for any $\gamma \in \Pi(\mu, \nu)$ it holds
		\[
		\gamma(X \times X \setminus D^\epsilon) \le \gamma\bigl(X \times (X \setminus D_\nu^\epsilon)\bigr) + \gamma\bigl((X \setminus D_\mu^\epsilon) \times X\bigr) = \nu(X \setminus D_\nu^\epsilon) + \mu(X \setminus D_\mu^\epsilon) < \epsilon.
		\]
		Then by Prokhorov's theorem one can choose a weakly convergent subsequence $\gamma_{n_k} \rightharpoonup \gamma^* \in \Pi(\mu, \nu)$. Since $K(\cdot)$ is lower semicontinuous with respect to the  weak convergence,
		\[
		K(\gamma^*) \le \liminf K(\gamma_{n_k}) = \inf_{\gamma \in \Pi(\mu, \nu)} K(\gamma) \eqset J(\mu, \nu),
		\]
		i.e.\ $\gamma^*$ is an optimal transport plan from $\mu$ to $\nu$.
	\end{proof}
	
	Notice, however, that optimal transport plan from $\mu$ to $\nu$ may be not unique.
	
	\begin{corollary}
		$J(\mu, \nu) = 0$ iff $\mu = \nu$.
	\end{corollary}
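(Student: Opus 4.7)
The plan is to verify the two implications separately, using Theorem~\ref{thm:optimal_plan} for the nontrivial direction.

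For the easy direction $\mu = \nu \Rightarrow J(\mu,\nu) = 0$, I would exhibit an explicit plan of zero cost. Take $\gamma := (\mathrm{id} \times \mathrm{id})_\# \mu$, the pushforward of $\mu$ under the diagonal map $x \mapsto (x,x)$. Then $\pi^x_\# \gamma = \pi^y_\# \gamma = \mu = \nu$, so $\gamma \in \Pi(\mu,\nu)$, and by the change-of-variables formula
\[K(\gamma) = \int_X c(x,x) \diff \mu(x) = 0,\]
using $c(x,x) = 0$. Since $J \ge 0$ always, this gives $J(\mu,\nu) = 0$.

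For the converse $J(\mu,\nu) = 0 \Rightarrow \mu = \nu$, the key tool is Theorem~\ref{thm:optimal_plan}: it yields an optimal plan $\gamma^* \in \Pi(\mu,\nu)$ with $K(\gamma^*) = 0$. Since $c \ge 0$, this forces $\gamma^*$ to be concentrated on the set $\Delta := \{(x,y) \in X \times X : c(x,y) = 0\}$. By the consistency assumption on $c$, we have $\Delta = \{(x,x) : x \in X\}$, the diagonal of $X \times X$; note that $\Delta$ is closed as a preimage of $\{0\}$ under the continuous map $c$, so it is Borel and there is no measurability concern.

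Finally, I would read off $\mu = \nu$ from the marginals of $\gamma^*$. For any Borel $A \subset X$, writing $\widetilde A := \{(x,x) : x \in A\} = \Delta \cap (A \times X) = \Delta \cap (X \times A)$ and using $\gamma^*(\Delta^c) = 0$, we get
\[\mu(A) = \gamma^*(A \times X) = \gamma^*(\widetilde A) = \gamma^*(X \times A) = \nu(A),\]
so $\mu = \nu$. The main (minor) obstacle is simply making precise that zero integral of a nonnegative continuous function against $\gamma^*$ forces support in the zero set, which is immediate from Chebyshev's inequality applied to $c$.
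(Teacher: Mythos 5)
Your proof is correct and follows exactly the route the paper intends: the corollary is stated without proof immediately after Theorem~\ref{thm:optimal_plan} precisely because the nontrivial direction reduces to taking an optimal plan of zero cost and observing it must be concentrated on the diagonal $\{c = 0\} = \{(x,x)\}$, while the easy direction uses the diagonal pushforward. Both halves of your argument, including the marginal computation and the measurability remark, are sound.
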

	
	Let us recall some properties of the transportation functional $J(\cdot, \cdot)$ (see e.g.\ \cite{AmbrosioGigliSavare2008,Santambrogio2015,Villani2009}).
	
	\begin{lemma}\label{lem:convex}
		The functional $J(\cdot, \cdot)$ is convex, i.e.\ for any measures $\mu_0, \nu_0, \mu_1, \nu_1 \in \Prsp{X}$ and $t \in [0, 1]$ it holds that
		\[
		J(\mu_t, \nu_t) \le (1 - t) J(\mu_0, \nu_0) + t J(\mu_1, \nu_1),
		\]
		where $\mu_t \eqset (1 - t) \mu_0 + t \mu_1$, $\nu_t \eqset (1 - t) \nu_0 + t \nu_1$.
	\end{lemma}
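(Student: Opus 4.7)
The plan is to build an explicit competitor transport plan for the pair $(\mu_t,\nu_t)$ by taking a convex combination of optimal plans for the endpoint pairs, and then observe that both the marginal constraint and the cost functional behave linearly under such convex combinations.

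Concretely, I would first dispense with the trivial case where $J(\mu_0,\nu_0)=\infty$ or $J(\mu_1,\nu_1)=\infty$, since then the right-hand side is $+\infty$ and the inequality is vacuous. Assuming both are finite, Theorem~\ref{thm:optimal_plan} supplies optimal plans $\gamma_0\in\Pi(\mu_0,\nu_0)$ and $\gamma_1\in\Pi(\mu_1,\nu_1)$ with $K(\gamma_i)=J(\mu_i,\nu_i)$. Define the convex combination
\[
\gamma_t := (1-t)\gamma_0 + t\gamma_1 \in \Prsp{X\times X}.
\]
I would then verify that $\gamma_t\in\Pi(\mu_t,\nu_t)$ by a direct marginal computation: for any Borel $A\subset X$,
\[
\pi^x_\#\gamma_t(A) = (1-t)\pi^x_\#\gamma_0(A) + t\pi^x_\#\gamma_1(A) = (1-t)\mu_0(A) + t\mu_1(A) = \mu_t(A),
\]
and analogously for the $y$-marginal giving $\nu_t$.

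Since $K(\gamma) = \int c\,d\gamma$ is linear in $\gamma$, we get
\[
K(\gamma_t) = (1-t)K(\gamma_0) + tK(\gamma_1) = (1-t)J(\mu_0,\nu_0) + tJ(\mu_1,\nu_1),
\]
and the definition of $J$ as the infimum over $\Pi(\mu_t,\nu_t)$ yields
\[
J(\mu_t,\nu_t) \le K(\gamma_t) = (1-t)J(\mu_0,\nu_0) + tJ(\mu_1,\nu_1),
\]
which is the claim.

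There is essentially no obstacle here: the only subtlety is making sure that optimal plans exist in order to use them, which is exactly what Theorem~\ref{thm:optimal_plan} provides (and the infinite-cost case handled separately above). Alternatively one could avoid invoking Theorem~\ref{thm:optimal_plan} by choosing minimizing sequences $\gamma_0^n,\gamma_1^n$ with $K(\gamma_i^n)\to J(\mu_i,\nu_i)$, forming $\gamma_t^n=(1-t)\gamma_0^n+t\gamma_1^n\in\Pi(\mu_t,\nu_t)$, and passing to the limit in $K(\gamma_t^n)$; this variant makes the argument self-contained and avoids any measurable selection considerations.
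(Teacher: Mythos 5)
Your proposal is correct and follows essentially the same route as the paper: take optimal plans $\gamma_0,\gamma_1$ for the endpoint pairs, form the convex combination $\gamma_t=(1-t)\gamma_0+t\gamma_1\in\Pi(\mu_t,\nu_t)$, and use linearity of $K$ together with the definition of $J$ as an infimum. Your extra remarks (handling the infinite case, and the variant with minimizing sequences) are fine but not needed beyond the paper's argument.
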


	\begin{proof}
		Let $\gamma_s$ be an optimal transport plan from $\mu_s$ to $\nu_s$, $s \in \{0, 1\}$. Then $\gamma_t \eqset (1 - t) \gamma_0 + t \gamma_1 \in \Pi(\mu_t, \nu_t)$, hence
		\[
		J(\mu_t, \nu_t) \le K(\gamma_t) = (1 - t) K(\gamma_0) + t K(\gamma_1) = (1 - t) J(\mu_0, \nu_0) + t J(\mu_1, \nu_1).\qedhere
		\]
	\end{proof}

	\begin{corollary}\label{cor:lip}
		Fix $\mu_0, \mu_1 \in \Prsp{X}$. Then for any $0 \le t < t' \le 1$ 
		\[
		J(\mu_t, \mu_{t'}) \le (t' - t) J(\mu_0, \mu_1).
		\]
	\end{corollary}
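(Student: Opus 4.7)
The plan is to derive this Lipschitz-type estimate as a direct consequence of the convexity established in Lemma~\ref{lem:convex}. The trick is to rewrite $\mu_t$ and $\mu_{t'}$ as convex combinations of the \emph{same} pair of measures, so that one of the two ``endpoint costs'' in the convexity bound vanishes.

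Assume $t < 1$ (the case $t=1$ forces $t'=1$ and the inequality is trivial). Set $s := (t'-t)/(1-t) \in [0,1]$. A direct computation shows
\[
	\mu_{t'} = (1-s)\mu_t + s\mu_1, \qquad \mu_t = (1-s)\mu_t + s\mu_t.
\]
Applying Lemma~\ref{lem:convex} to these two representations yields
\[
	J(\mu_t,\mu_{t'}) \le (1-s) J(\mu_t,\mu_t) + s J(\mu_t,\mu_1) = s J(\mu_t,\mu_1),
\]
since $J(\mu_t,\mu_t) = 0$.

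It remains to control $J(\mu_t,\mu_1)$, which I would do by the same device: write $\mu_t = (1-t)\mu_0 + t\mu_1$ and $\mu_1 = (1-t)\mu_1 + t\mu_1$, so that another application of Lemma~\ref{lem:convex} gives $J(\mu_t,\mu_1) \le (1-t) J(\mu_0,\mu_1)$. Combining the two estimates,
\[
	J(\mu_t,\mu_{t'}) \le s(1-t) J(\mu_0,\mu_1) = (t'-t) J(\mu_0,\mu_1),
\]
as desired.

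There is essentially no obstacle here; the only thing to be careful about is the degenerate case $t=1$ in the choice of $s$. As a sanity check, one can also produce an explicit witness plan: taking an optimal $\gamma^* \in \Pi(\mu_0,\mu_1)$ and defining
\[
	\gamma := (1-t')(\mathrm{id},\mathrm{id})_\# \mu_0 + t\,(\mathrm{id},\mathrm{id})_\# \mu_1 + (t'-t)\gamma^*,
\]
one checks the marginals are $\mu_t$ and $\mu_{t'}$ and that $K(\gamma) = (t'-t) J(\mu_0,\mu_1)$, giving the same bound without any case distinction. I would probably present the convexity argument in the paper since it reuses the lemma just proved.
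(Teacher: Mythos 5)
Your argument is correct and is essentially the same as the paper's: the paper likewise writes $\mu_{t'} = \frac{1-t'}{1-t}\mu_t + \frac{t'-t}{1-t}\mu_1$ and $\mu_t$ as the corresponding trivial combination, applies Lemma~\ref{lem:convex} twice, and uses $J(\mu_t,\mu_t)=J(\mu_1,\mu_1)=0$. The explicit witness plan you sketch is a valid alternative but not what the paper does.
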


	\begin{proof}
		From the convexity of $J(\cdot, \cdot)$ it follows that
		\begin{multline}
			J(\mu_t, \mu_{t'}) \le \frac{1 - t'}{1 - t} J(\mu_t, \mu_t) + \frac{t' - t}{1 - t} J(\mu_t, \mu_1)\\
			{} \le \frac{t' - t}{1 - t} \bigl((1 - t) J(\mu_0, \mu_1) + t J(\mu_1, \mu_1)\bigr) = (t' - t) J(\mu_0, \mu_1).\qedhere
		\end{multline}
	\end{proof}
	
	\begin{lemma}\label{lem:l.s.c.}
		The functional $J(\cdot, \cdot)$ is lower semicontinuous with respect to the weak convergence of measures.
	\end{lemma}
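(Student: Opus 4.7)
The plan is the standard compactness/lower-semicontinuity argument, essentially a parametric version of the proof of Theorem~\ref{thm:optimal_plan}. Suppose $\mu_n \rightharpoonup \mu$ and $\nu_n \rightharpoonup \nu$ in $\Prsp{X}$. I want to show $J(\mu,\nu) \le \liminf_n J(\mu_n,\nu_n)$. It suffices to assume the right hand side is finite, and by passing to a subsequence one may assume the $\liminf$ is an honest limit $L < \infty$ and that $J(\mu_n,\nu_n) < \infty$ for each $n$.

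For each $n$, Theorem~\ref{thm:optimal_plan} supplies an optimal $\gamma_n \in \Pi(\mu_n,\nu_n)$ with $K(\gamma_n) = J(\mu_n,\nu_n)$. The first key step is to show that $\{\gamma_n\}$ is tight in $\Prsp{X\times X}$. Because the sequences $\{\mu_n\}$ and $\{\nu_n\}$ are weakly convergent on the Radon space $X$, each of them is tight (a convergent sequence together with its limit is tight by the Radon property applied to each measure and a standard diagonal argument). Thus for every $\epsilon > 0$ one can pick compact sets $D_\mu^\epsilon, D_\nu^\epsilon \subset X$ with $\mu_n(X \setminus D_\mu^\epsilon) < \epsilon/2$ and $\nu_n(X \setminus D_\nu^\epsilon) < \epsilon/2$ for all $n$; exactly as in the proof of Theorem~\ref{thm:optimal_plan}, $D^\epsilon := D_\mu^\epsilon \times D_\nu^\epsilon$ is compact in $X\times X$ and $\gamma_n(X\times X \setminus D^\epsilon) < \epsilon$ uniformly in $n$.

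The second step is to apply Prokhorov's theorem to extract a subsequence $\gamma_{n_k} \rightharpoonup \gamma^*$. Since the projections $\pi^x, \pi^y\colon X\times X \to X$ are continuous, weak convergence of $\gamma_{n_k}$ pushes forward to weak convergence of the marginals, and by uniqueness of limits $\pi^x_\# \gamma^* = \mu$, $\pi^y_\# \gamma^* = \nu$; hence $\gamma^* \in \Pi(\mu,\nu)$. Finally, using lower semicontinuity of $K(\cdot)$ with respect to weak convergence (which holds because $c \ge 0$ is continuous, via the standard approximation by bounded continuous functions),
\[
J(\mu,\nu) \le K(\gamma^*) \le \liminf_{k \to \infty} K(\gamma_{n_k}) = \lim_{k\to\infty} J(\mu_{n_k},\nu_{n_k}) = L,
\]
which is the desired inequality.

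The only place requiring a bit of care, and hence the main obstacle, is the tightness of $\{\gamma_n\}$: one must argue that a weakly convergent sequence of Borel probability measures on the Radon space $X$ is jointly tight, not merely that each individual measure is tight. This is handled by noting that tightness of $\{\mu_n\} \cup \{\mu\}$ (and similarly for $\nu$) follows from choosing, for each $\epsilon$, a single compact set that works for the limit measure and all but finitely many $\mu_n$ (via the portmanteau theorem applied to a slight thickening), and then enlarging it to absorb the finitely many exceptional measures. Once this is in hand, the rest of the argument is the routine Prokhorov plus lower semicontinuity scheme already used for Theorem~\ref{thm:optimal_plan}.
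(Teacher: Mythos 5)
Your proposal is correct and follows essentially the same route as the paper: extract optimal plans $\gamma_n$, observe that tightness of the weakly convergent marginal sequences gives tightness of $\{\gamma_n\}$, pass to a subsequence realizing the $\liminf$, apply Prokhorov's theorem to get a limit plan in $\Pi(\mu,\nu)$, and conclude by lower semicontinuity of $K(\cdot)$. The only difference is that you spell out the uniform tightness of a weakly convergent sequence on a Radon space, which the paper simply cites from Ambrosio--Gigli--Savar\'e.
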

	
	\begin{proof}
		Let $\mu_n \rightharpoonup \mu$, $\nu_n \rightharpoonup \nu$ and $\gamma_n \in \Pi(\mu_n, \nu_n)$ be an optimal transport plan from $\mu_n$ to $\nu_n$. Notice that $\{\mu_n\}_{n \in \nset}$ and $\{\nu_n\}_{n \in \nset}$ weakly converge so they are tight \cite[p.~108]{AmbrosioGigliSavare2008} and $\{\gamma_n\}_{n \in \nset}$ is the same. Consider subsequence whose lower limit $\liminf J(\mu_n, \nu_n) \in [0, \infty]$ is attained. It contains a weakly convergent subsequence $\gamma_{n_k} \rightharpoonup \tilde\gamma \in \Pi(\mu, \nu)$. So, due to the lower semicontinuity of $K(\cdot)$ one can obtain 
		\begin{equation}
			J(\mu, \nu) \le K(\tilde\gamma) \le \liminf K(\gamma_{n_k}) = \lim J(\mu_{n_k}, \nu_{n_k}) = \liminf J(\mu_n, \nu_n).\qedhere
		\end{equation}
	\end{proof}
	
	\begin{corollary}
		$J(\cdot, \cdot)$ is measurable with respect to the product of Borel $\sigma$-algebras $\mathcal{B}_w\bigl(\Prsp{X}\bigr) \otimes \mathcal{B}_w\bigl(\Prsp{X}\bigr)$ induced by the topology of weak convergence.
	\end{corollary}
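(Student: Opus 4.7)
My plan is to derive this corollary from Lemma~\ref{lem:l.s.c.} combined with the classical identification of the Borel $\sigma$-algebra of a product of separable metric spaces with the product of Borel $\sigma$-algebras.

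First I would observe that the argument in the proof of Lemma~\ref{lem:l.s.c.} in fact yields \emph{joint} sequential lower semicontinuity of $J$ on $\Prsp{X} \times \Prsp{X}$ equipped with the product weak topology: the proof applies verbatim to any sequence $(\mu_n, \nu_n) \to (\mu, \nu)$ in that product, since by definition this means $\mu_n \rightharpoonup \mu$ and $\nu_n \rightharpoonup \nu$. Because $X$ is a separable metric space, $\Prsp{X}$ with the weak topology is itself separable and metrizable (via the L\'evy--Prokhorov metric), hence so is the product, and on a metrizable space sequential lower semicontinuity coincides with ordinary lower semicontinuity. Therefore every sublevel set $\bigl\{(\mu, \nu) : J(\mu, \nu) \le a\bigr\}$ is closed in the product weak topology, so $J$ is measurable with respect to the Borel $\sigma$-algebra $\mathcal{B}_w\bigl(\Prsp{X} \times \Prsp{X}\bigr)$ of that product.

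Second, I would invoke the standard fact that for second-countable (in particular, separable metric) spaces $Y_1, Y_2$ the Borel $\sigma$-algebra of the product equals the product of Borel $\sigma$-algebras; this is because open rectangles generated by countable bases form a countable subbase for the product topology. Applied to $\Prsp{X}$ with the weak topology this gives
\[\mathcal{B}_w\bigl(\Prsp{X} \times \Prsp{X}\bigr) = \mathcal{B}_w\bigl(\Prsp{X}\bigr) \times \mathcal{B}_w\bigl(\Prsp{X}\bigr),\]
so the measurability obtained in the previous step transfers to measurability with respect to the product $\sigma$-algebra, which is exactly the conclusion.

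There is no substantive obstacle here: once joint lower semicontinuity is read off from the proof of Lemma~\ref{lem:l.s.c.}, the rest is a routine appeal to the product $\sigma$-algebra identification. The only point that deserves a line of justification is the separability and metrizability of $\Prsp{X}$ in the weak topology, which follows from the Radon (and hence separable metric) structure of $X$.
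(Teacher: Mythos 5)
Your proposal is correct and is exactly the argument the paper intends: the corollary is stated without proof as an immediate consequence of Lemma~\ref{lem:l.s.c.}, the point being that joint weak lower semicontinuity makes the sublevel sets of $J$ closed in the (separable, metrizable) product weak topology, whose Borel $\sigma$-algebra coincides with the product of the Borel $\sigma$-algebras by second countability. Your two added justifications (metrizability of $\Prsp{X}$ to pass from sequential to topological lower semicontinuity, and the product-Borel identification) are precisely the routine facts being suppressed.
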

	
	\begin{theorem}\label{thm:strong_to_weak}
		Let $\{\nu_n\}_{n \in \nset}$ be a sequence of measures from \Prsp{X} such that\\
		${J(\nu^*, \nu_n) \to 0}$ \textup{(}or $J(\nu_n, \nu^*) \to 0$\textup{)} for some $\nu^* \in \Prsp{X}$; then $\nu_n \rightharpoonup \nu^*$.
	\end{theorem}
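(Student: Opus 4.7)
The plan is to use optimal transport plans $\gamma_n \in \Pi(\nu^*, \nu_n)$, establish their tightness, extract a weakly convergent subsequence, and invoke the lower semicontinuity of $K(\cdot)$ to force the weak limit to be $\nu^*$.

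By Theorem~\ref{thm:optimal_plan}, for each $n$ pick an optimal plan $\gamma_n \in \Pi(\nu^*, \nu_n)$, so that $K(\gamma_n) = J(\nu^*, \nu_n) \to 0$. Markov's inequality then yields $\gamma_n(\{c > \delta\}) \le K(\gamma_n)/\delta \to 0$ for every fixed $\delta > 0$: the plans $\gamma_n$ concentrate on arbitrarily small $c$-neighborhoods of the diagonal. Next I would prove $\{\gamma_n\}$ is tight on $X \times X$. Its first marginal $\nu^*$ is tight as a single measure on a Radon space; for the second marginal, given $\epsilon > 0$, I would choose compact $K \subset X$ with $\nu^*(X \setminus K) < \epsilon/2$ and estimate, for suitable $\delta > 0$ and $n$ large,
\begin{equation*}
\nu_n(X \setminus \tilde K) \le \gamma_n\bigl((X \setminus K) \times X\bigr) + \gamma_n\bigl(\{c > \delta\}\bigr) < \epsilon,
\end{equation*}
where $\tilde K$ is a compact set containing $\{y : c(x, y) \le \delta \text{ for some } x \in K\}$. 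Together with individual tightness of the finitely many remaining $\nu_n$, this gives uniform tightness of $\{\nu_n\}$ and hence of $\{\gamma_n\}$.

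Prokhorov's theorem then provides a weakly convergent subsequence $\gamma_{n_k} \rightharpoonup \gamma^*$, and weak continuity of the marginal projections forces $\pi^x_\# \gamma^* = \nu^*$ and $\pi^y_\# \gamma^* = \tilde\nu$ for some $\tilde\nu \in \Prsp{X}$. By the lower semicontinuity of $K(\cdot)$ noted just before Theorem~\ref{thm:optimal_plan},
\begin{equation*}
K(\gamma^*) \le \liminf_{k \to \infty} K(\gamma_{n_k}) = 0,
\end{equation*}
so $\gamma^*$ is concentrated on $\{(x,y) : c(x,y) = 0\} = \{(x,x) : x \in X\}$. Hence $\gamma^* = (\mathrm{id}, \mathrm{id})_\# \nu^*$ and $\tilde\nu = \nu^*$. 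Since every subsequence of $\{\nu_n\}$ admits a further subsequence converging weakly to $\nu^*$, the whole sequence satisfies $\nu_n \rightharpoonup \nu^*$; the case $J(\nu_n, \nu^*) \to 0$ is treated symmetrically using plans in $\Pi(\nu_n, \nu^*)$.

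The main obstacle is the tightness step. Continuity and consistency of $c$ tell us that the sublevel sets $\{y : c(x, y) \le \delta\}$ form a neighborhood basis at each individual point $x$, but promoting this pointwise behavior to a compact enlargement $\tilde K$ of the compact $K$ requires uniformity over $x \in K$. Making this rigorous in a general Radon space is delicate and will rest on joint continuity of $c$ restricted to $K \times X$ combined with the consistency condition, arguing by contradiction through extraction of a subsequence $x_n \to x \in K$ with $c(x_n, y_n) \to 0$ and showing that $y_n$ itself must cluster at $x$.
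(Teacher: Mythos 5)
Your overall strategy (optimal plans, Prokhorov, lower semicontinuity of $K$ forcing the limit plan onto the diagonal) is internally coherent once tightness of $\{\gamma_n\}$ is granted, but the tightness step is a genuine gap, not merely a delicate point, and it is exactly the step you cannot close in the generality in which Theorem~\ref{thm:strong_to_weak} is stated. Your compact enlargement $\tilde K \supset \{y : c(x,y) \le \delta \text{ for some } x \in K\}$ need not exist: in a non-locally-compact Radon space such as an infinite-dimensional separable Hilbert space with $c(x,y) = \|x-y\|^2$, already for $K = \{x\}$ a single point the sublevel set $\{y : c(x,y) \le \delta\}$ is a closed ball, which is not contained in any compact set. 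The hypothesis that closed $c$-balls are compact is precisely Assumption~\ref{asm:loc_compact}, which the paper only introduces later and which Theorem~\ref{thm:strong_to_weak} is not allowed to use. Your fallback in the last paragraph also fails: from $x_n \to x$ and $c(x_n, y_n) \to 0$ you cannot conclude that $y_n$ clusters at $x$. Joint continuity of $c$ only helps if $y_n$ already has a convergent subsequence (which is what you are trying to prove), and consistency controls $c(x, y_n)$ for a \emph{fixed} first argument; passing from $c(x_n, y_n)$ to $c(x, y_n)$ requires a triangle-type inequality, i.e.\ Assumption~\ref{asm:general_growth_condition}, again not available here. Note also a circularity lurking in the background: a posteriori the $\nu_n$ are tight because they converge weakly in a Radon space, but you need tightness before you can extract the convergent subsequence.

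The paper avoids all of this by arguing directly through the portmanteau characterization: if $\nu_n \not\rightharpoonup \nu^*$ there is a closed set $F$ with $\limsup \nu_n(F) > \nu^*(F) + 2\epsilon$; the open ``$c$-neighbourhoods'' $F_r = \{x : \inf_{y \in F} c(x,y) < r\}$ shrink to $F$ (this is where continuity and consistency of $c$ enter), so $\nu^*(F_{r_0})$ is close to $\nu^*(F)$ for small $r_0$, and then any plan in $\Pi(\nu^*,\nu_n)$ must move mass at least $\epsilon$ from $X \setminus F_{r_0}$ into $F$ at unit cost at least $r_0$, giving $J(\nu^*,\nu_n) \ge \epsilon r_0$. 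No tightness of $\{\nu_n\}$, no extraction of limit plans. If you want to salvage your argument, you would need either to restrict to the setting of Assumption~\ref{asm:loc_compact} or to replace the compactness extraction by a direct estimate of this kind; as written, the proof does not go through.
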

	
	\begin{proof}
		Assume $\{\nu_n\}_{n \in \nset}$ fails to converge to $\nu^*$. Then there exists a closed set $F \subset X$ such that $\limsup \nu_n(F) > \nu^*(F)$. Let $3 \epsilon \eqset \lim \nu_n(F) - \nu^*(F) > 0$ without relabelling. Consider the following open neighbourhood of $F$:
		\[
		F_r \eqset \bigl\{x \in X: \inf_{y \in F} c(x, y) < r\bigr\} \supset F,\; r > 0.
		\]
		For any $r > 0$ set $F_r$ is open due to continuity of $c(\cdot, \cdot)$. One can obtain that for any $x \notin F$ there exist an ``open ball'' $B_r^c(x) \eqset \bigl\{y \in X: c(x, y) < r\bigr\}$ such that $B_r^c(x) \cap F = \emptyset$. Therefore $\bigcap_{r > 0} F_r = F$ and $\nu^*(F) = \lim_{r \to 0} \nu^*(F_r)$. Let $r_0 > 0$ be such that $\nu^*(F_{r_0}) < \nu^*(F) + \epsilon$. As $\nu_n(F) > \nu^*(F) + 2 \epsilon$ starting from some $n$,
		\[
		\gamma_n\bigl((X \setminus F_{r_0}) \times F\bigr) = \nu_n(F) - \gamma_n(F_{r_0} \times F) \ge \nu_n(F) - \nu^*(F_{r_0}) > \epsilon,
		\]
		where $\gamma_n \in \Pi(\nu^*, \nu_n)$ is an optimal transport plan. Consequently,
		\[
		J(\nu^*, \nu_n) = K(\gamma_n) \ge \gamma_n\bigl((X \setminus F_{r_0}) \times F\bigr) \inf\bigl\{c(x, y): x \notin F_{r_0},\, y \in F\bigr\} \ge \epsilon r_0
		\]
		and $\liminf J(\nu^*, \nu_n) \ge \epsilon r_0 > 0$, which contradicts the condition of the theorem.
	\end{proof}
	
	As we have seen, convergence with respect to the transportation functional implies the weak convergence. Actually, the converse also holds under some additional assumptions.
	
	\begin{theorem}\label{thm:bound_weak_to_strong}
		Let $\nu_n \rightharpoonup \nu^*$ and $\supp \nu_n \subset F$ for all $n$, where $F \subset X$ is a closed set such that $\sup_{x, y \in F} c(x, y) < \infty$. Then $\lim J(\nu_n, \nu^*) = \lim J(\nu^*, \nu_n) = 0$.
	\end{theorem}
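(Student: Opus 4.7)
The key structural observation is that $\supp \nu^* \subset F$: since $F$ is closed and $\nu_n \rightharpoonup \nu^*$, Portmanteau gives $\nu^*(F) \ge \limsup \nu_n(F) = 1$. Consequently every transport plan in $\Pi(\nu_n, \nu^*)$ is supported in $F \times F$, where $c$ is uniformly bounded by $M := \sup_{F \times F} c < \infty$. The remainder of the proof amounts to exhibiting one concrete plan $\gamma_n \in \Pi(\nu_n, \nu^*)$ for which $K(\gamma_n) \to 0$, and then symmetrising.

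The cleanest route is via Skorokhod's representation theorem: since $X$ is a separable metric space and $\nu^*$ is tight, one can realise $\nu_n$ and $\nu^*$ as the laws of random elements $\xi_n, \xi^*$ on a common probability space with $\xi_n \to \xi^*$ almost surely. Let $\gamma_n$ be the law of the pair $(\xi_n, \xi^*)$; then $\gamma_n \in \Pi(\nu_n, \nu^*)$ and $\xi_n, \xi^* \in F$ almost surely. Continuity of $c$ together with $c(\xi^*, \xi^*) = 0$ yields $c(\xi_n, \xi^*) \to 0$ a.s., while the pointwise bound $c(\xi_n, \xi^*) \le M$ permits dominated convergence, so $K(\gamma_n) = \est c(\xi_n, \xi^*) \to 0$. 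Hence $J(\nu_n, \nu^*) \le K(\gamma_n) \to 0$. The statement $J(\nu^*, \nu_n) \to 0$ follows identically from the reversed coupling $(\xi^*, \xi_n)$.

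I do not anticipate a real obstacle: the Skorokhod coupling combined with the uniform bound on $F \times F$ reduces everything to dominated convergence, and the only detail that must be checked is the inclusion $\supp \nu^* \subset F$ that licenses this bound. If one prefers to avoid Skorokhod, the parallel elementary route is a partition construction: use tightness of $\nu^*$ to pick a compact $K \subset F$ with $\nu^*(F \setminus K)$ small; cover $K$ by finitely many $c$-balls $B_{r_i}^c(x_i)$ (which form a neighbourhood basis at each point by the consistency of $c$) chosen small enough that $c < \epsilon$ on each $B_{r_i}^c(x_i) \times B_{r_i}^c(x_i)$ and with $\nu^*$-null boundaries; disjointify to obtain $\nu^*$-continuity sets $A_i$; and build $\gamma_n$ that retains mass $\min\bigl(\nu_n(A_i), \nu^*(A_i)\bigr)$ in each $A_i$ (contributing $\le \epsilon$ per unit mass) and moves the residue arbitrarily within $F \times F$ (contributing $\le M$ per unit mass). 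Portmanteau applied to the continuity sets $A_i$ shows that the residue mass vanishes as $n \to \infty$.
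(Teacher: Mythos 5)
Your argument is correct, but it takes a genuinely different route from the paper. The paper proves this by hand: it covers $X$ by countably many balls on which $c<\epsilon$, takes a subordinate partition of unity $\{f_i\}$, couples the masses $f_i\lfloor\nu_n$ with $f_i\lfloor\nu^*$ ball by ball (cost $\le\epsilon$ per unit mass), and dumps the residual mass into a product coupling whose total mass is forced to be $<\epsilon$ in the limit by weak convergence of the integrals $\int f_i\diff\nu_n$; the residual cost is then $\le M\epsilon$, giving $\limsup J(\nu_n,\nu^*)\le(1+M)\epsilon$. Your primary route instead invokes the Skorokhod/Dudley representation theorem and reduces everything to dominated convergence via the coupling $\mathrm{law}(\xi_n,\xi^*)$; your ``elementary'' fallback with continuity sets is essentially the paper's construction in measurable rather than partition-of-unity form. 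Both proofs hinge on the same two facts, which you identify correctly: $\supp\nu^*\subset F$ by the Portmanteau theorem, so every plan lives in $F\times F$ where $c\le M$, and one can concentrate almost all mass near the diagonal. What your route buys is brevity and conceptual clarity; what it costs is reliance on a nontrivial external theorem, and you should be careful with its scope: $X$ here is only a Radon (separable metric, not necessarily complete) space, so the classical Polish-space Skorokhod theorem does not literally apply and you need Dudley's extension to separable metric spaces (valid since the limit law automatically has separable support). The paper's construction is self-contained and reuses machinery (explicit surgery on plans) that appears again later in the paper, which is presumably why the author prefers it. One small point of hygiene in your write-up: the bound $c(\xi_n,\xi^*)\le M$ holds only on the almost-sure event $\{\xi_n\in F\ \forall n\}\cap\{\xi^*\in F\}$, which is exactly where $\supp\nu_n\subset F$ and $\supp\nu^*\subset F$ enter; you note this, so the dominated convergence step is sound.
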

	
	\begin{proof}
		Fix $\epsilon > 0$. Due to separability of $X$ and continuity of $c(\cdot, \cdot)$ one can cover $X$ with a countable union of closed balls $\{\overline{B}_{r_i}(x_i)\}_{i \in \nset}$ such that $c(x, y) < \epsilon$ whenever $x, y \in \overline{B}_{2 r_i}(x_i)$ for all $i$. Fix $m \in \nset$ such that $\nu^*\bigl(X \setminus \bigcup_{i = 1}^m B_{r_i}(x_i)\bigr) < \epsilon$ and consider continuous functions $f_i(\cdot) \colon X \to [0, 1]$, $0 \le i \le m$ satisfying
		\begin{align}
			\sum_{i = 0}^m f_i(x) &\equiv 1;\\
			f_0(x) &= 0,\; x \in \overline{B}_{r_i}(x_i),\; 1 \le i \le m;\\
			f_i(x) &= 0,\; x \notin \overline{B}_{2 r_i}(x_i),\; 1 \le i \le m.
		\end{align}
		Without loss of generality $\int f_i \diff\nu^* > 0$ for all $i$. Define measures
		\begin{align}
			\lambda_n^i &\eqset \frac{(f_i\lfloor \nu_n) \otimes (f_i\lfloor \nu^*)}{\max\{\int f_i \diff\nu_n, \int f_i \diff\nu^*\}},\; 1 \le i \le m,\; n \in \nset;\\
			\hat\nu_n &\eqset \nu_n - \sum_{i = 1}^m \pi^x_\# \lambda_n^i = \biggl(1 - \sum_{i = 1}^m \frac{\int f_i \diff\nu^*}{\max\{\int f_i \diff\nu_n, \int f_i \diff\nu^*\}} f_i\biggr)\lfloor \nu_n\ge f_0\lfloor \nu_n \ge 0;\\
			\hat\nu_n^* &\eqset \nu^* - \sum_{i = 1}^m \pi^y_\# \lambda_n^i = \biggl(1 - \sum_{i = 1}^m \frac{\int f_i \diff\nu_n}{\max\{\int f_i \diff\nu_n, \int f_i \diff\nu^*\}} f_i\biggr)\lfloor \nu^*\ge f_0\lfloor \nu^* \ge 0.
		\end{align}
		Since $\nu_n \rightharpoonup \nu^*$
		\begin{multline}
			\hat\nu_n(X) = \hat\nu_n^*(X) = \int \biggl(1 - \sum_{i = 1}^m \frac{\int f_i \diff\nu_n}{\max\{\int f_i \diff\nu_n, \int f_i \diff\nu^*\}} f_i\biggr) \diff\nu^*\\
			{} \to \int \Bigl(1 - \sum_{i = 1}^m f_i\Bigr) \diff\nu^* = \int f_0 \diff\nu^*\le \nu^*\Bigl(X \setminus \bigcup_{i = 1}^m B_{r_i}(x_i)\Bigr) < \epsilon.
		\end{multline}
		
		Consider the following transport plans:
		\[
		\gamma_n \eqset \frac{\hat\nu_n \otimes \hat\nu_n^*}{\hat\nu_n(X)} + \sum_{i = 1}^m \lambda_n^i \in \Pi(\nu_n, \nu^*),\; n \in \nset.
		\]
		From $\supp \nu_n \subset F$ it follows $\supp \nu^* \subset F$ and $\supp \gamma_n \subset F \times F$. Define $M \eqset {\sup_{x, y \in F} c(x, y) < \infty}$. $\supp \lambda_n^i \subset \overline{B}_{2 r_i}(x_i) \times \overline{B}_{2 r_i}(x_i)$ by the definition of functions $f_i(\cdot)$. Now one can obtain that
		\begin{multline}
			\limsup J(\nu_n, \nu^*) \le \limsup K(\gamma_n) \le \limsup \biggl(M \frac{\hat\nu_n(X) \hat\nu_n^*(X)}{\hat\nu_n(X)} + \epsilon \sum_{i = 1}^m \lambda_n^i(X \times X)\biggr)\\
			{} \le M \limsup \hat\nu_n^*(X) + \epsilon \le (1 + M) \epsilon.
		\end{multline}
		It proves that $J(\nu_n, \nu^*) \to 0$ because of the arbitrary choice of $\epsilon$. In the same way one can show that $J(\nu^*, \nu_n) \to 0$.
	\end{proof}

	\section{Transportation topology}\label{sec:topology}
	
	Assume that the cost function $c(\cdot, \cdot)$ satisfies the following weak triangle inequalities.
	
	\begin{assumption}\label{asm:general_growth_condition}
		There exist constants $A, B \ge 0$ such that the following set of inequalities holds for all $x, y, z \in X$:
		\begin{align}
			c(x, y) &\le A + B \bigl(c(x, z) + c(y, z)\bigr),\\
			c(x, y) &\le A + B \bigl(c(x, z) + c(z, y)\bigr),\\
			c(x, y) &\le A + B \bigl(c(z, x) + c(y, z)\bigr),\\
			c(x, y) &\le A + B \bigl(c(z, x) + c(z, y)\bigr).
		\end{align}
	\end{assumption}
	
	This is a quite natural assumption which holds for a wide class of functions, e.g.\ for $c(x, y) = \rho^p(x, y)$, where $\rho(\cdot, \cdot)$ is a metric on $X$ and $p > 0$ (the case of $p$-Wasserstein spaces).
	
	Let us now show that the Monge--Kantorovich distance ``inherits'' the inequalities for the cost function.
	
	\begin{lemma}\label{lem:general_weak_triangle}
		For all $\mu, \nu, \lambda \in \Prsp{X}$
		\begin{align}
			J(\mu, \nu) &\le A + B \bigl(J(\mu, \lambda) + J(\nu, \lambda)\bigr),\\
			J(\mu, \nu) &\le A + B \bigl(J(\mu, \lambda) + J(\lambda, \nu)\bigr),\\
			J(\mu, \nu) &\le A + B \bigl(J(\lambda, \mu) + J(\nu, \lambda)\bigr),\\
			J(\mu, \nu) &\le A + B \bigl(J(\lambda, \mu) + J(\lambda, \nu)\bigr).
		\end{align}
	\end{lemma}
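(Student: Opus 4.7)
The plan is to reduce each of the four inequalities to its pointwise counterpart from Assumption~\ref{asm:general_growth_condition} by a standard gluing construction. If either $J(\mu,\lambda)$ or $J(\lambda,\nu)$ (or the appropriate reversed quantity) is infinite, the inequality is trivial, so we may assume both transportation costs are finite.

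First I would treat the second inequality, which is the cleanest case. By Theorem~\ref{thm:optimal_plan} pick optimal plans $\gamma_{12}\in\Pi(\mu,\lambda)$ and $\gamma_{23}\in\Pi(\lambda,\nu)$. Since $X$ is Radon, the gluing lemma (see e.g.\ \cite[Lem.~5.3.2]{A-G-S}) produces a measure $\sigma\in\Prsp{X\times X\times X}$ with $\pi^{xy}_\#\sigma=\gamma_{12}$ and $\pi^{yz}_\#\sigma=\gamma_{23}$. Then $\gamma:=\pi^{xz}_\#\sigma$ lies in $\Pi(\mu,\nu)$, and applying the pointwise inequality $c(x,z)\le A+B(c(x,y)+c(y,z))$ under the integral yields
\[
J(\mu,\nu)\le K(\gamma)=\int c(x,z)\diff\sigma \le A+B\Bigl(\int c(x,y)\diff\sigma+\int c(y,z)\diff\sigma\Bigr)=A+B\bigl(J(\mu,\lambda)+J(\lambda,\nu)\bigr).
\]

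The remaining three inequalities follow by the same recipe, only swapping the roles of the two marginals of one of the building-block plans. For instance, for the first inequality I would take $\gamma_{12}\in\Pi(\mu,\lambda)$ and $\gamma_{32}\in\Pi(\nu,\lambda)$, glue along the common marginal $\lambda$ on the $y$-coordinate, and then apply the corresponding pointwise bound $c(x,z)\le A+B(c(x,y)+c(z,y))$; the fourth inequality glues $\gamma_{21}\in\Pi(\lambda,\mu)$ and $\gamma_{23}\in\Pi(\lambda,\nu)$ along the $\lambda$-marginal placed on the first coordinate of each; the third inequality is analogous.

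I do not anticipate a genuine obstacle: the only subtlety is choosing, for each of the four variants, the right orientation of the two building-block plans so that their common $\lambda$-marginal can be identified on a single coordinate of $X^3$, after which the matching inequality from Assumption~\ref{asm:general_growth_condition} applies verbatim under the integral sign. The gluing lemma itself requires only that $X$ be Radon, which is assumed throughout Section~\ref{subsec:transp_distance}.
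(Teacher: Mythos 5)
Your proposal is correct and matches the paper's argument: the paper likewise takes optimal plans into/out of $\lambda$, glues them along the common $\lambda$-marginal via the disintegration/gluing lemma of \cite[Lemma~5.3.2]{A-G-S}, projects to obtain a competitor in $\Pi(\mu,\nu)$, and integrates the corresponding pointwise inequality from Assumption~\ref{asm:general_growth_condition}, treating the remaining three cases ``similarly.'' Your explicit handling of the infinite-cost case and of the orientation bookkeeping for each variant is a harmless elaboration of the same proof.
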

	
	\begin{proof}
		Consider measures $\mu, \nu, \lambda \in \Prsp{X}$ and optimal plans $\gamma_1 \in \Pi(\mu, \lambda)$, $\gamma_2 \in \Pi(\nu, \lambda)$. By disintegration theorem \cite[Theorem~5.3.1]{AmbrosioGigliSavare2008} there exists a measure $\sigma \in \Pi(\mu, \nu, \lambda)$ such that $\pi^{x, z}_\# \sigma = \gamma_1$ and $\pi^{y, z}_\# \sigma = \gamma_2$ \cite[Lemma~5.3.2]{AmbrosioGigliSavare2008}. Now, applying the weak triangle inequality, one can obtain
		\begin{multline}
			J(\mu, \nu) \le K(\pi^{x, y}_\# \sigma) = \int c(x, y) \diff \sigma \le \int \Bigl(A + B \bigl(c(x, z) + c(y, z)\bigr)\Bigr) \diff \sigma\\
			{} = A + B \bigl(K(\gamma_1) + K(\gamma_2)\bigr) = A + B \bigl(J(\mu, \lambda) + J(\nu, \lambda)\bigr).
		\end{multline}
		The other inequalities might be proved similarly.
	\end{proof}

	Notice that if $c(\cdot, \cdot)$ is a metric on $X$, then $J(\cdot, \cdot)$ is a metric on $\Prsp{X}$ (which may make the value $+\infty$). Moreover, it is an inner metric, even if $X$ is a disconnected space. Indeed, if $J(\mu, \nu) < \infty$, then the curve given by $[0, 1] \owns t \mapsto (1 - t) \mu + t \nu$ is a minimizing geodesic connecting $\mu$ to $\nu$ due to Corollary~\ref{cor:lip}.
	
	\begin{lemma}\label{lem:compact_growth_condition}
		Let $D \subset X$ be a compact set. Then for any $\epsilon > 0$ there exist an open neighbourhood $U_\epsilon(D) \supset D$ and a constant $B_\epsilon^D$ such that
		\begin{align}
			c(x, y) &\le \epsilon + c(x, z) + B_\epsilon^D c(y, z),\\
			c(x, y) &\le \epsilon + c(z, y) + B_\epsilon^D c(z, x)
		\end{align}
		for all $x, y, z \in U_\epsilon(D)$.
	\end{lemma}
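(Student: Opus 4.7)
My plan is to argue by contradiction, trading the compactness of $D$ and the consistency of $c$ against the negation of the conclusion. I will establish the first inequality in detail; the second one follows by applying the same argument to the (continuous, consistent) cost $\tilde c(a,b) := c(b,a)$, and then the final neighbourhood is the intersection of the two and the final constant is the maximum of the two.

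Suppose, for contradiction, that some $\epsilon > 0$ admits no valid pair $(U, M)$: for every open neighbourhood $U \supset D$ and every $M > 0$ there exist points $x, y, z \in U$ with
\[c(x, y) > \epsilon + c(x, z) + M\,c(y, z).\]
Let $\rho$ denote the metric on the Radon space $X$ and set $U_n := \{x \in X : \rho(x, D) < 1/n\}$, which is open because $x \mapsto \rho(x, D)$ is continuous. Plugging in $(U_n, M_n)$ with $M_n := n$ yields sequences $x_n, y_n, z_n \in U_n$ with
\[c(x_n, y_n) > \epsilon + c(x_n, z_n) + n\,c(y_n, z_n).\]
Since $\rho(x_n, D), \rho(y_n, D), \rho(z_n, D) < 1/n$ and $D$ is compact, a standard diagonal extraction produces subsequences (not relabelled) with $x_n \to x^*$, $y_n \to y^*$, $z_n \to z^*$ in $(X, \rho)$, with $x^*, y^*, z^* \in D$.

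Now I exploit the violation inequality and the continuity/consistency of $c$. The inequality gives $n\,c(y_n, z_n) \le c(x_n, y_n)$; since $c(x_n, y_n) \to c(x^*, y^*) < \infty$ by continuity, this forces $c(y_n, z_n) \to 0$, so by continuity $c(y^*, z^*) = 0$, and since $c(p,q) = 0$ iff $p = q$ we conclude $y^* = z^*$. Passing to the limit in the violation inequality then produces
\[c(x^*, y^*) \ge \epsilon + c(x^*, z^*) + 0 = \epsilon + c(x^*, y^*),\]
which is impossible. This contradiction yields the desired $(U_\epsilon(D), B_\epsilon^D)$ for the first inequality.

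The step I expect to be the most delicate is the clean use of the consistency assumption: one needs to be sure that $\rho$-convergence of the triples to points in $D$ together with $c(y_n, z_n) \to 0$ really pins the two limits together, and that the continuity of $c$ is applied at every place where we interchange a limit and the evaluation of $c$. Once this is in place, the rest of the argument is routine bookkeeping, and the symmetric inequality is handled by the obvious analogue (with $c(z_n, x_n)$ playing the role of $c(y_n, z_n)$ and the $\tilde c$ substitution above making this formal if desired).
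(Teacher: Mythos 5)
Your argument is correct, but it takes a genuinely different route from the paper's. You run a sequential-compactness/contradiction argument: extract violating triples $(x_n,y_n,z_n)$ from shrinking metric neighbourhoods $U_n$ of $D$ with constants $M_n=n$, pass to convergent subsequences with limits in $D$, use boundedness of $c(x_n,y_n)$ to force $c(y_n,z_n)\to 0$, hence $y^*=z^*$ via continuity and the fact that $c$ vanishes only on the diagonal, and land on the absurd inequality $c(x^*,y^*)\ge \epsilon + c(x^*,y^*)$. The paper instead argues directly and constructively: uniform continuity of $c$ on $D\times D$ yields a neighbourhood $V$ of the diagonal on which $c(x,y)<c(x,z)+\epsilon/2$, while on $D^2\setminus V$ one has $c(y,z)\ge\delta>0$, so the explicit constant $B_\epsilon^D=M/\delta$ with $M=\max_{D\times D}c$ works on $D$; a second continuity/perturbation step then enlarges $D$ to an open neighbourhood $U_\epsilon(D)$. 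Your version buys brevity and absorbs the extension to a neighbourhood into the choice of the $U_n$ from the outset (sidestepping the paper's somewhat delicate second step), at the price of a non-effective constant; the paper's version produces the constant explicitly. Both rest on the same ingredients: compactness of $D$, joint continuity of $c$, and $c(p,q)=0$ iff $p=q$. One small point to make explicit in a final write-up: the points $x_n\in U_n$ need not lie in $D$, so $x_n\to x^*\in D$ should be obtained by choosing $d_n\in D$ with $\rho(x_n,d_n)<1/n$ and extracting a convergent subsequence of $(d_n)$ --- this is exactly the routine bookkeeping you flag, and it goes through.
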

	
	\begin{proof}
		Fix $\epsilon > 0$. As $c(\cdot, \cdot)$ is continuous, it is uniformly continuous on $D \times D$, hence there exists an open set $V \in X \times X$ such that $(y, y) \in V$ for all $y \in D$ and $c(x, y) < c(x, z) + \epsilon / 2$ for all $x \in X$, $(y, z) \in V$. Define $M \eqset \max_{x, y \in D} c(x, y) < \infty$ and 
		\[\delta \eqset \min_{(y, z) \in D^2 \setminus V} c(y, z) > 0\]
		which is positive due to compactness of $D$. If $(y, z) \in D^2 \setminus V$ then $c(x, y) \le M \le \frac{M}{\delta} c(y, z)$ for all $x$. Consequently,
		\begin{equation}\label{eq:compact_cond}
			c(x, y) \le \epsilon / 2 + c(x, z) + \frac{M}{\delta} c(y, z)
		\end{equation}
		for all $x, y, z \in D$. Denote $\frac{M}{\delta}$ by $B_\epsilon^D$.
		
		Due to continuity of $c(\cdot, \cdot)$ one can choose an open neighbourhood $W$ of $D^3$ such that for all $x, y, z \in W$ there exist $x', y', z' \in D$ for which $|c(x, y) - c(x', y')| < \gamma$, $|c(x, z) - c(x', z')| < \gamma$ and $|c(y, z) - c(y', z')| < \gamma$ where $\gamma \eqset \epsilon / \bigr(6 (1 + B_\epsilon^D)\bigr)$. As $D^3$ is compact, $\rho(D^3, X^3 \setminus W) > 0$ and there exists such neighbourhood $U_\epsilon(D)$ that $\bigl(U_\epsilon(D)\bigr)^3 \subset W$. Now, applying inequality~\eqref{eq:compact_cond} and the definion of $W$ one can obtain that
		\begin{multline}
			c(x, y) \le \gamma + c(x', y') \le \gamma + \epsilon / 2 + c(x', z') + B_\epsilon^D c(y', z')\\
			{} \le \gamma (2 + B_\epsilon^D) + \epsilon / 2 + c(x, z) + B_\epsilon^D c(y, z) \le \epsilon + c(x, z) + B_\epsilon^D c(y, z)
		\end{multline}
		for all $x, y, z \in U_\epsilon(D)$. The second inequality can be treated in the same way.
	\end{proof}
	
	\begin{lemma}[continuity]
		Take two sequences $\{\mu_n\}_{n \in \nset}$, $\{\nu_n\}_{n \in \nset}$ such that\\
		$J(\mu^*, \mu_n) \to 0$ and $J(\nu^*, \nu_n) \to 0$ for some measures $\mu^*, \nu^* \in \Prsp{X}$. Then $J(\mu_n, \nu_n) \to J(\mu^*, \nu^*)$.
	\end{lemma}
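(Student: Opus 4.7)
The plan has two sides. For the lower bound, Theorem~\ref{thm:strong_to_weak} converts the hypotheses into the weak convergences $\mu_n \rightharpoonup \mu^*$ and $\nu_n \rightharpoonup \nu^*$, after which Lemma~\ref{lem:l.s.c.} yields $J(\mu^*, \nu^*) \le \liminf_n J(\mu_n, \nu_n)$.

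The substance is the reverse inequality $\limsup_n J(\mu_n, \nu_n) \le J(\mu^*, \nu^*)$, which I need only when $J(\mu^*, \nu^*) < \infty$. The strategy is to build explicit plans $\pi_n \in \Pi(\mu_n, \nu_n)$ whose costs approach $J(\mu^*, \nu^*)$. I take optimal plans $\alpha_n \in \Pi(\mu^*, \mu_n)$, $\gamma \in \Pi(\mu^*, \nu^*)$, $\beta_n \in \Pi(\nu^*, \nu_n)$, and glue them via iterated disintegration (as in the proof of Lemma~\ref{lem:general_weak_triangle}) to obtain $\sigma_n \in \Prsp{X^4}$ whose $(1,2)$-, $(1,3)$- and $(3,4)$-marginals equal $\alpha_n$, $\gamma$ and $\beta_n$ respectively. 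The $(2,4)$-marginal $\pi_n$ then lies in $\Pi(\mu_n, \nu_n)$, reducing the task to bounding $K(\pi_n) = \int c(x_2, x_4)\, d\sigma_n$.

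Fix $\eta > 0$. Tightness of $\gamma \in \Prsp{X^2}$ (since $X^2$ is Radon) together with the absolute continuity of $A \mapsto \int_A c\, d\gamma$, which is finite, produces a compact $D \subset X$ with $\mu^*(D^c), \nu^*(D^c) < \eta$ and $\int_{(D \times D)^c} c\, d\gamma < \eta$. Lemma~\ref{lem:compact_growth_condition} applied with tolerance $\eta$, together with the symmetric variants obtained by an analogous proof using consistency of $c$, yields an open neighborhood $U$ of $D$ and a constant $B > 0$ such that chaining two local weak-triangle inequalities gives $c(x_2, x_4) \le 2\eta + c(x_1, x_3) + B\bigl(c(x_1, x_2) + c(x_3, x_4)\bigr)$ on $U^4$. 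I then split $\int c(x_2, x_4)\, d\sigma_n$ over the good event $G := U^4$ and its complement. Using the marginals of $\sigma_n$, the integral over $G$ is bounded by $2\eta + J(\mu^*, \nu^*) + B\bigl(J(\mu^*, \mu_n) + J(\nu^*, \nu_n)\bigr)$, whose $\limsup$ is $J(\mu^*, \nu^*) + 2\eta$. Over $G^c$, Assumption~\ref{asm:general_growth_condition} yields a bound of the form $A(1+B) + B c(x_1, x_2) + B^2 c(x_1, x_3) + B^2 c(x_3, x_4)$; the $c(x_1, x_2)$ and $c(x_3, x_4)$ contributions vanish in the limit since they are dominated by $J(\mu^*, \mu_n)$ and $J(\nu^*, \nu_n)$, while the additive constant contributes at most $A(1+B)\sigma_n(G^c)$, which is small because $\sigma_n(G^c) \le \mu^*(U^c) + \mu_n(U^c) + \nu^*(U^c) + \nu_n(U^c)$ and each term is $O(\eta)$ for large $n$ (the $\mu_n, \nu_n$ parts by weak convergence against the closed set $U^c$).

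The main obstacle is the remaining term $\int_{G^c} c(x_1, x_3)\, d\sigma_n$: its $(x_1, x_3)$-marginal is $\gamma$, but $G^c$ is defined through the other two coordinates. I will split $G^c$ by which variable lies in $U^c$. On the parts $\{x_1 \in U^c\}$ and $\{x_3 \in U^c\}$, the integral reduces to $\gamma$-integrals of $c$ over subsets of $(D \times D)^c$ and is $<\eta$ by the choice of $D$. On the parts $\{x_2 \in U^c\}$ and $\{x_4 \in U^c\}$, the pair $(x_1, x_3)$ is not confined to a small-mass set, so I truncate: fix $M$ so that $\int_{\{c(x_1, x_3) > M\}} c\, d\gamma < \eta$, bound the truncated integrand by $M$, and use $\sigma_n(\{x_2 \in U^c\}) = \mu_n(U^c) = O(\eta)$ eventually (and similarly for $x_4$). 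The four parameters $\eta$, $D$, $B$, $M$ have to be orchestrated so the total error is a controlled multiple of $\eta$ in the limit; the $\limsup$ is then $J(\mu^*, \nu^*) + O(\eta)$, and letting $\eta \to 0$ closes the argument.
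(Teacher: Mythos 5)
Your proposal is correct in outline and follows the same backbone as the paper's proof: glue the optimal plans $\alpha_n$, $\gamma$, $\beta_n$ into $\sigma_n\in\Prsp{X^4}$, bound $J(\mu_n,\nu_n)$ by the cost of the induced $(\mu_n,\nu_n)$-marginal, and split the integral over a ``good'' region built from a compact set $D$ carrying most of the mass of $\gamma$ (where Lemma~\ref{lem:compact_growth_condition} gives an almost-additive triangle inequality) and its complement (where Assumption~\ref{asm:general_growth_condition} applies). You also supply the $\liminf$ direction explicitly via Theorem~\ref{thm:strong_to_weak} and Lemma~\ref{lem:l.s.c.}, which the paper leaves implicit. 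The one genuinely different ingredient is your treatment of the awkward cross term $\int_{G^c} c(x_1,x_3)\,d\sigma_n$ on the slices where only the $\mu_n$- or $\nu_n$-coordinate leaves the good set: the paper argues that $\sigma_n$ converges weakly to the measure $(\pi^x\times\pi^x\times\pi^y\times\pi^y)_\#\gamma$ concentrated on the diagonal and then uses continuity and boundedness of $c$ on the relevant closed set, whereas you truncate $c(x_1,x_3)$ at a level $M$ with $\int_{\{c>M\}}c\,d\gamma<\eta$ and control the remaining mass by $\mu_n(U^c)$, $\nu_n(U^c)$ via the portmanteau theorem. Your route is arguably more elementary, since it sidesteps the (not entirely trivial) claim that the glued plans $\sigma_n$ converge weakly.

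One detail you should nail down is the order of the parameter choices in that truncation step: as written you pick $D$ (hence $U$) first and $M$ afterwards, so the error $M\cdot\limsup_n\mu_n(U^c)\le M\,\mu^*(D^c)$ is of size $M(\eta)\cdot\eta$, which need not vanish since $M(\eta)$ may blow up as $\eta\to 0$. The fix is simply to choose $M$ from $\gamma$ and $\eta$ first, and only then take $D$ large enough that $\mu^*(D^c),\nu^*(D^c)<\eta/(1+M)$ (enlarging $D$ only improves all the other estimates, and the resulting constant $B_\eta^D$ is harmless because it multiplies quantities tending to $0$). You flag that the parameters ``have to be orchestrated,'' so this is a matter of making the quantifier order explicit rather than a missing idea.
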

	
	\begin{proof}
		Let $\gamma_n^1 \in \Pi(\mu^*, \mu_n)$, $\gamma^2 \in \Pi(\mu^*, \nu^*)$, $\gamma_n^3 \in \Pi(\nu^*, \nu_n)$ be optimal transport plans. Consider measures $\sigma_n \in \Prsp{X^4}$ such that $(\pi^{x_2} \times \pi^{x_1})_\# \sigma_n = \gamma_n^1$,  $(\pi^{x_2} \times \pi^{x_3})_\# \sigma_n = \gamma^2$ and $(\pi^{x_3} \times \pi^{x_4})_\# \sigma_n = \gamma_n^3$. Since the sequences are tight one can fix $\epsilon > 0$ and a compact set $D$ such that $\mu_n(X \setminus D)$, $\mu^*(X \setminus D)$, $\nu_n(X \setminus D)$, $\nu^*(X \setminus D)$ and $\int_{X^2 \setminus D^2} c(x_2, x_3) \diff \gamma^2$ are less than $\epsilon$.
		Obviously,
		\[
		J(\mu_n, \nu_n) \le K\bigl((\pi^{x_1} \times \pi^{x_4})_\# \sigma_n\bigr) = \int c(x_1, x_4) \diff \sigma_n.
		\]
		
		Consider the set $Y \eqset U_\epsilon(D) \times D^2 \times U_\epsilon(D)$. Now one can obtain due to Lemma~\ref{lem:compact_growth_condition} that
		\begin{multline}
			\int_Y c(x_1, x_4) \diff \sigma_n \le \int_Y \bigl(\epsilon + B_\epsilon^D c(x_2, x_1) + (1 + \epsilon) c(x_2, x_4)\bigr) \diff \sigma_n\\
			{} \le \int_Y \bigl(\epsilon + B_\epsilon^D c(x_2, x_1) + (1 + \epsilon) \epsilon + (1 + \epsilon) B_\epsilon^D c(x_3, x_4) + (1 + \epsilon)^2 c(x_2, x_3)\bigr) \diff \sigma_n\\
			{} \le \epsilon + B_\epsilon^D K(\gamma_n^1) + (1 + \epsilon) \epsilon + (1 + \epsilon) B_\epsilon^D K(\gamma_n^3) + (1 + \epsilon)^2 K(\gamma^2)\\
			{} = \epsilon + B_\epsilon^D J(\mu^*, \mu_n) + (1 + \epsilon) \epsilon + (1 + \epsilon) B_\epsilon^D J(\nu^*, \nu_n) + (1 + \epsilon)^2 J(\mu^*, \nu^*)\\
			{} \xrightarrow[n \to \infty]{} \epsilon + (1 + \epsilon) \epsilon + (1 + \epsilon)^2 J(\mu^*, \nu^*) \to J(\mu^*, \nu^*) \text{ as } \epsilon \to 0.
		\end{multline}
		The remaining term may be bounded by Assumption~\ref{asm:general_growth_condition} in the following way:
		\begin{multline}
			\int_{X^4 \setminus Y} c(x_1, x_4) \diff \sigma_n \le \int_{X^4 \setminus Y} \bigl(A + B c(x_2, x_1) + B c(x_2, x_4)\bigr) \diff \sigma_n\\
			{} \le \int_{X^4 \setminus Y} \bigl(A + B c(x_2, x_1) + A B + B^2 c(x_2, x_3) + B^2 c(x_3, x_4)\bigr) \diff \sigma_n\\
			{} \le (A + A B) \sigma_n(X^4 \setminus Y) + B J(\mu^*, \mu_n) + B^2 \int_{X^4 \setminus Y} c(x_2, x_3) \diff \sigma_n + B^2 J(\nu^*, \nu_n)\\
			{} \le 4 (A + A B) \epsilon + B J(\mu^*, \mu_n) + B^2 J(\nu^*, \nu_n) + B^2 \int_{X^4 \setminus Y} c(x_2, x_3) \diff \sigma_n.
		\end{multline}
		Notice that $X^4 \setminus Y = \Bigl[X \times (X^2 \setminus D^2) \times X\Bigr] \cup \Bigl[\bigl(X \setminus U_\epsilon(D)\bigr) \times D^2 \times X\Bigr] \cup \Bigl[X \times D^2 \times \bigl(X \setminus U_\epsilon(D)\bigr)\Bigr]$.
		\[
		\int_{X \times (X^2 \setminus D^2) \times X} c(x_2, x_3) \diff \sigma_n = \int_{X^2 \setminus D^2} c(x_2, x_3) \diff \gamma^2 < \epsilon.
		\]
		Moreover, since $J(\mu^*, \mu_n) \to 0$ and $J(\nu^*, \nu_n) \to 0$, $\gamma_n^1 \rightharpoonup (x \times x)_\# \mu^*$ and $\gamma_n^3 \rightharpoonup (x \times x)_\# \nu^*$, so $\sigma_n \rightharpoonup (\pi^x \times \pi^x \times \pi^y \times \pi^y)_\# \gamma^2$. Since $\bigl(X \setminus U_\epsilon(D)\bigr) \times D^2 \times X$ is a closed set and $c(x_2, x_3)$ is continuous and bounded on it we have that
		\[
		\limsup \int_{\bigl(X \setminus U_\epsilon(D)\bigr) \times D^2 \times X} c(x_2, x_3) \diff \sigma_n \le \int_{\bigl(D \cap X \setminus U_\epsilon(D)\bigr) \times D} c(x, y) \diff \gamma^2 = 0,
		\]
		as $D \cap X \setminus U_\epsilon(D) = \emptyset$. In the same way one can obtain that 
		\[
		\limsup \int_{X \times D^2 \times \bigl(X \setminus U_\epsilon(D)\bigr)} c(x_2, x_3) \diff \sigma_n = 0.
		\] 
		Thus $J(\mu_n, \nu_n) \to J(\mu^*, \nu^*)$.
	\end{proof}
	
	\begin{corollary}[topology]\label{cor:topology}
		The set of all balls $B_r^J(\mu) \eqset \bigl\{\nu \in \Prsp{X}: J(\mu, \nu) < r\bigr\}$ form a basis of a \emph{topology $\tau_J$} on \Prsp{X}, and $J(\cdot, \cdot)$ is continuous with respect to this topology.
	\end{corollary}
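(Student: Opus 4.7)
The plan is to check the two axioms for a topological basis and then derive the joint continuity of $J$ from the preceding continuity lemma, which will act as a surrogate for the missing triangle inequality.

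The covering axiom is immediate, since $J(\mu,\mu)=0<r$ yields $\mu \in B_r^J(\mu)$ for every $r>0$. For the intersection axiom, fix $\nu \in B_{r_1}^J(\mu_1) \cap B_{r_2}^J(\mu_2)$ and suppose for contradiction that no radius $r>0$ satisfies $B_r^J(\nu) \subset B_{r_1}^J(\mu_1) \cap B_{r_2}^J(\mu_2)$. Taking $r=1/n$ and, after passing to a subsequence, fixing an index $i \in \{1,2\}$, we obtain $\lambda_n$ with $J(\nu,\lambda_n) \to 0$ and $J(\mu_i,\lambda_n) \ge r_i$ for all $n$. Applying the preceding continuity lemma to the constant sequence $\mu_n \equiv \mu_i$ (so $J(\mu_i,\mu_n) \to 0$) and to $\nu_n := \lambda_n$ (so $J(\nu,\nu_n) \to 0$) gives $J(\mu_i,\lambda_n) \to J(\mu_i,\nu) < r_i$, contradicting $J(\mu_i,\lambda_n) \ge r_i$.

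For continuity of $J$ on $\tau_J \times \tau_J$, fix $(\mu,\nu) \in \Prsp{X} \times \Prsp{X}$ and an open neighbourhood $U \subset [0,\infty]$ of $J(\mu,\nu)$. It suffices to produce $r_1,r_2 > 0$ with $J(\mu',\nu') \in U$ for all $(\mu',\nu') \in B_{r_1}^J(\mu) \times B_{r_2}^J(\nu)$, since this product is a basic neighbourhood of $(\mu,\nu)$ in $\tau_J \times \tau_J$. Should no such radii exist, the choice $r_1 = r_2 = 1/n$ would supply sequences $\mu_n,\nu_n$ with $J(\mu,\mu_n) \to 0$, $J(\nu,\nu_n) \to 0$, and $J(\mu_n,\nu_n) \in [0,\infty] \setminus U$; the continuity lemma then forces $J(\mu_n,\nu_n) \to J(\mu,\nu) \in U$, contradicting closedness of the complement of $U$.

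The only conceptually nontrivial step is the contradiction for the intersection axiom: the weak triangle inequality with additive constant~$A$ (Lemma~\ref{lem:general_weak_triangle}) does not by itself arrange $B_r^J(\nu) \subset B_{r_1}^J(\mu_1)$ for small~$r$, so the continuity lemma is the genuinely needed input. Once it is available, both halves of the corollary fall out by the same kind of sequential contradiction.
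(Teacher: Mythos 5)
Your proof is correct and follows exactly the route the paper intends: the corollary is stated without proof as an immediate consequence of the preceding continuity lemma, and your write-up supplies precisely the omitted verification (basis axioms via the lemma applied to a constant sequence and a sequence of centers, joint continuity via the neighbourhood bases $B_{1/n}^J$). No gaps.
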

	
	Let us denote convergence of a sequence $\{\nu_n\}_{n \in \nset}$ to $\nu^*$ in the topology $\tau_J$ (\emph{the transportation convergence}) by $\nu_n \xrightarrow{J} \nu^*$. It is equivalent to $J(\nu_n, \nu^*) \to 0$ and $J(\nu^*, \nu_n) \to 0$. If $X$ is a compact space, it follows from Theorems~\ref{thm:strong_to_weak} and~\ref{thm:bound_weak_to_strong} that weak convergence of measures is equivalent to the transportation convergence. But in this case the space \Prsp{X} with the topology of weak convergence is compact itself, and so is $\bigl(\Prsp{X}, \tau_J\bigr)$. Notice that if $X$ is not compact, \Prsp{X} is neither compact nor locally compact.
	
	\begin{lemma}\label{lem:assoc+symmetr}
		Let $\{\mu_n\}_{n \in \nset}$, $\{\nu_n\}_{n \in \nset}$, $\{\lambda_n\}_{n \in \nset}$ be tight sequences such that\\
		$J(\mu_n, \nu_n) \to 0$ and $J(\nu_n, \lambda_n) \to 0$; then $J(\nu_n, \mu_n) \to 0$ and $J(\mu_n, \lambda_n) \to 0$. 
	\end{lemma}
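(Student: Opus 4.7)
The plan is a two-region decomposition controlled by tightness. On a suitable compact set $D$ the cost function $c$ satisfies the sharp local estimate of Lemma~\ref{lem:compact_growth_condition}, while outside $D$ the global weak triangle inequality of Assumption~\ref{asm:general_growth_condition} controls the tail, whose mass is small by tightness. The crucial bookkeeping point is that the degenerate constant $B_\eta^D$ must always multiply a quantity that tends to $0$ as $n \to \infty$, while the stand-alone parameters $\eta$ and $\epsilon$ can both be sent to $0$ at the end.

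For the first claim ($J(\nu_n, \mu_n) \to 0$), let $\gamma_n \in \Pi(\mu_n, \nu_n)$ be optimal (existing since eventually $J(\mu_n, \nu_n) < \infty$). The reversed coupling $\tilde\gamma_n := (\pi^y \times \pi^x)_\# \gamma_n \in \Pi(\nu_n, \mu_n)$ gives $J(\nu_n, \mu_n) \le \int c(y, x)\diff\gamma_n$. Fix $\eta, \epsilon > 0$; by tightness pick a compact $D$ with $\mu_n(X \setminus D), \nu_n(X \setminus D) < \epsilon$ for all $n$, and apply Lemma~\ref{lem:compact_growth_condition} to $D$ with parameter $\eta$. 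Specializing its second inequality at $z = x \in D \subset U_\eta(D)$ yields, for $x, y \in D$,
\[c(y, x) \le \eta + B_\eta^D c(x, y),\]
while Assumption~\ref{asm:general_growth_condition}'s first inequality at $z = y$ gives, globally, $c(y, x) \le A + B c(x, y)$. Splitting the integral over $D \times D$ and $X^2 \setminus D^2$ (whose $\gamma_n$-mass is at most $2\epsilon$) produces
\[J(\nu_n, \mu_n) \le \int c(y, x)\diff\gamma_n \le \eta + 2 A \epsilon + (B_\eta^D + B)\,J(\mu_n, \nu_n).\]
Letting $n \to \infty$, then $\eta, \epsilon \to 0$, proves the first claim.

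For the second claim ($J(\mu_n, \lambda_n) \to 0$), take optimal $\gamma_n^1 \in \Pi(\mu_n, \nu_n)$ and $\gamma_n^2 \in \Pi(\nu_n, \lambda_n)$, and build (via the disintegration theorem, exactly as in Lemma~\ref{lem:general_weak_triangle}) a measure $\sigma_n \in \Prsp{X^3}$ with $(\pi^{x_1} \times \pi^{x_2})_\# \sigma_n = \gamma_n^1$ and $(\pi^{x_2} \times \pi^{x_3})_\# \sigma_n = \gamma_n^2$; then $(\pi^{x_1} \times \pi^{x_3})_\# \sigma_n \in \Pi(\mu_n, \lambda_n)$, so $J(\mu_n, \lambda_n) \le \int c(x_1, x_3)\diff\sigma_n$. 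Choose a compact $D$ with $\mu_n, \nu_n, \lambda_n(X \setminus D) < \epsilon$ uniformly. On $D^3$, Lemma~\ref{lem:compact_growth_condition}'s first inequality at $z = x_2$ gives
\[c(x_1, x_3) \le \eta + c(x_1, x_2) + B_\eta^D c(x_3, x_2),\]
and on $X^3 \setminus D^3$ (mass $< 3\epsilon$) Assumption~\ref{asm:general_growth_condition}'s second inequality at $z = x_2$ gives $c(x_1, x_3) \le A + B c(x_1, x_2) + B c(x_2, x_3)$. Integrating, every term except $B_\eta^D \int c(x_3, x_2)\diff\sigma_n = B_\eta^D \int c(y, x)\diff\gamma_n^2(x, y)$ manifestly tends to $0$ with $n$; this remaining term also vanishes by rerunning the estimate of Part 1 on the plan $\gamma_n^2$, which uses tightness of $\nu_n, \lambda_n$ and $K(\gamma_n^2) \to 0$. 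Sending $n \to \infty$, then $\eta, \epsilon \to 0$, concludes.

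The key difficulty is precisely the reversed cost $c(x_3, x_2)$ in the second claim: a naive second application of Lemma~\ref{lem:compact_growth_condition} to convert it to $c(x_2, x_3)$ would introduce a summand of order $\eta B_\eta^D$, which need not vanish as $\eta \to 0$ since $B_\eta^D$ may blow up. The fix is to prove Part 1 first and invoke it to drive the reversed integral to $0$ directly, ensuring that $B_\eta^D$ always multiplies an $n$-vanishing quantity rather than the constant $\eta$.
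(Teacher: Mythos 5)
Your proof is correct and follows essentially the same route as the paper: split the cost integral over a compact set $D$ obtained from tightness, apply Lemma~\ref{lem:compact_growth_condition} on $D^2$ (resp.\ $D^3$) and Assumption~\ref{asm:general_growth_condition} on the small-mass complement, and let $n \to \infty$ before sending the auxiliary parameters to zero. The one place you are more careful than the paper is the reversed term $c(x_3, x_2)$ in the gluing step: the paper's displayed pointwise bound $c(x,z) \le \epsilon + (1+\epsilon)\,c(x,y) + B_\epsilon^D\, c(y,z)$ quietly assumes the favourable orientation of the last term, whereas you prove the symmetry claim first and invoke it so that $B_\eta^D$ only ever multiplies a quantity vanishing in $n$ --- a legitimate and cleaner resolution of the same difficulty.
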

	
	\begin{proof}
		Fix $\epsilon > 0$ and a compact set $D \subset X$ such that $\mu_n(D)$, $\nu_n(D)$ and $\lambda_n(D)$ are greater than  $1 - \epsilon$. Consider again measures $\sigma_n$ such that $\pi^{x, y}_\# \sigma_n = \gamma_n^1$ and $\pi^{y, z}_\# \sigma_n = \gamma_n^2$ where $\gamma_n^1$ and $\gamma_n^2$ are optimal transport plans from $\mu_n$ to $\nu_n$ and from $\nu_n$ to $\lambda_n$, respectively. Due to Assumption~\ref{asm:general_growth_condition} and Lemma~\ref{lem:compact_growth_condition} one can obtain that
		\begin{multline}
			J(\mu_n, \lambda_n) \le K\bigl((\pi^x \times \pi^z)_\# \sigma_n\bigr) = \int c(x, z) \diff \sigma_n\\
			{} \le \int_{D^3} \bigl(\epsilon + (1 + \epsilon) c(x, y) + B_\epsilon^D c(y, z)\bigr) \diff \sigma_n + \int_{X^3 \setminus D^3} \bigl(A + B c(x, y) + B c(y, z)\bigr) \diff \sigma_n\\
			{} \le \epsilon + (1 + \epsilon) J(\mu_n, \nu_n) + B_\epsilon^D J(\nu_n, \lambda_n) + 3 \epsilon A + B J(\mu_n, \nu_n) + B J(\nu_n, \lambda_n)\\
			{} \to \epsilon + 3 \epsilon A \to 0 \text{ as } \epsilon \to 0.
		\end{multline}
		Thus $J(\mu_n, \lambda_n) \to 0$. Similarly,
		\begin{multline}
			J(\nu_n, \mu_n) \le K\bigl((\pi^y \times \pi^x)_\# \gamma_n^1\bigr) = \int c(y, x) \diff \gamma_n^1\\
			{} \le \int_{D^3} \bigl(\epsilon + B_\epsilon^D c(x, y)\bigr) \diff \gamma_n^1 + \int_{X^3 \setminus D^3} \bigl(A + B c(x, y)\bigr) \diff \gamma_n^1\\
			{} \le \epsilon + B_\epsilon^D J(\mu_n, \nu_n) + 2 \epsilon A + B J(\mu_n, \nu_n) \to \epsilon + 2 \epsilon A,
		\end{multline}
		therefore $J(\nu_n, \mu_n) \to 0$.
	\end{proof}
	
	Let the relation $\mu \sim \nu$ be defined as $J(\mu, \nu) < \infty$. Then it is an equivalence on~\Prsp{X} and splits the space into equivalence classes $E(\mu) \eqset \bigl\{\nu \in \Prsp{X}: J(\mu, \nu) < \infty\bigr\}$. Notice that every equivalence class is path-connected, even if $X$ is disconnected, since curve $[0, 1] \owns t \mapsto (1 - t) \mu + t \nu$ is continuous by Corollary~\ref{cor:lip}, whenever $J(\mu, \nu) < \infty$. Let us denote by $E_0$ the class containing delta-measures, i.e.\ $E_0 \eqset E(\delta_{x_0}) = \bigl\{\nu \in \Prsp{X}: \int c(x, x_0) \diff \nu(x) < \infty\bigr\}$ for an arbitrary $x_0 \in X$ (obviously, it does not depend on the choice of $x_0$). 
	
	Consider the following useful construction: fix some point $x_0 \in X$ and for given $R > 0$ take a continuous function $f_R \colon X \times X \to [0, 1]$ such that $f_R(x, y) = 1$ for $x, y \in B_R^c(x_0)$ and $f_R(x, y) = 0$ if $x \notin B_{R + 1}^c(x_0)$ or $y \notin B_{R + 1}^c(x_0)$. Let us take measures $\mu$, $\nu$, $\gamma \in \Pi(\mu, \nu)$ and consider $\lambda \eqset f_R \lfloor \gamma$. Define
	\[
	\tilde\gamma \eqset \gamma - \lambda + (\pi^y \times \pi^y)_\# \lambda
	\]
	and $\tilde\nu \eqset (\pi^x)_\# \tilde\gamma$. So $\tilde\gamma \in \Pi(\tilde\nu, \nu)$ and $J(\tilde\nu, \nu) \le K(\tilde\gamma) = K(\gamma) - K(\lambda) = K(\gamma) - K'(\gamma)$ where $K'(\gamma) = K'_R(\gamma) \eqset K(f_R \lfloor \gamma)$.
	
	Now consider a weakly convergent sequence of plans $\Pi(\mu, \nu_n) \owns \gamma_n \rightharpoonup \gamma^* \in \Pi(\mu, \nu^*)$. One has $\tilde\gamma_n \rightharpoonup \tilde\gamma^*$ hence $\tilde\nu_n \rightharpoonup \tilde\nu^*$. But on the complement of the ball $B_{R + 1}^c(x_0)$ all the measures $\tilde\nu$ coincide with $\mu$, consequently, $\tilde\nu_n \xrightarrow{J} \tilde\nu^*$ by Theorem~\ref{thm:bound_weak_to_strong}.
	
	Theorems~\ref{thm:strong_to_weak} and~\ref{thm:bound_weak_to_strong} state some results about the relation between the transportation convergence and the weak convergence. The following theorem clarifies this relation and gives the necessary and sufficient condition of convergence in the topology~$\tau_J$.
	
	\begin{theorem}[criterion of the transportation convergence]\label{thm:criterion}
		Take measures $\nu^*$ and $\{\nu_n\}_{n \in \nset} \subset \Prsp{X}$. The following conditions are equivalent:
		\begin{enumerate}
			\item $\nu_n \xrightarrow{J} \nu^*$;
			
			\item $\nu_n \rightharpoonup \nu^*$ and $J(\mu, \nu_n) \to J(\mu, \nu^*)$ for any $\mu \in \Prsp{X}$;
			
			\item $\nu_n \rightharpoonup \nu^*$ and $J(\mu, \nu_n) \to J(\mu, \nu^*) < \infty$ for some $\mu \in E(\nu^*)$.
		\end{enumerate}
	\end{theorem}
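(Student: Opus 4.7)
The easy directions go first. For (1)$\Rightarrow$(2): weak convergence is immediate from Theorem~\ref{thm:strong_to_weak}, and applying the continuity lemma (the unlabelled lemma stated just before Corollary~\ref{cor:topology}) to the constant sequence $\mu_n \equiv \mu$ alongside $\{\nu_n\}$ gives $J(\mu, \nu_n) \to J(\mu, \nu^*)$ for every $\mu$. For (2)$\Rightarrow$(3): simply take $\mu = \nu^* \in E(\nu^*)$.

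For the main direction (3)$\Rightarrow$(1) I would choose optimal plans $\gamma_n \in \Pi(\mu, \nu_n)$, so that $K(\gamma_n) = J(\mu, \nu_n) \to J(\mu, \nu^*) < \infty$. Since $\mu$ and $\{\nu_n\}$ are tight (the latter by weak convergence), $\{\gamma_n\}$ is tight, and by the usual subsequence argument I may pass to a subsequence along which $\gamma_n \rightharpoonup \gamma^* \in \Pi(\mu, \nu^*)$. Lower semicontinuity of $K$ combined with $K(\gamma_n) \to J(\mu, \nu^*)$ forces $\gamma^*$ to be optimal and $K(\gamma_n) \to K(\gamma^*)$.

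Then I would deploy the $f_R$-truncation described just before the theorem: for each $R > 0$ it produces measures $\tilde\nu_n^R$, $\tilde\nu^{*, R}$ (built from $\gamma_n$ and $\gamma^*$ respectively) that agree with $\mu$ off $\overline{B_{R+1}^c(x_0)}$ and satisfy
\begin{equation}
J(\tilde\nu_n^R, \nu_n) \le \int (1-f_R) c \, d\gamma_n, \qquad J(\tilde\nu^{*,R}, \nu^*) \le \int (1-f_R) c \, d\gamma^*,
\end{equation}
and, by that very discussion, $\tilde\nu_n^R \xrightarrow{J} \tilde\nu^{*, R}$ as $n \to \infty$ for each fixed $R$. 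Now $f_R c$ is continuous and bounded (its support is the closed bounded square $\overline{B_{R+1}^c(x_0)}^2$, on which $c$ is bounded via Assumption~\ref{asm:general_growth_condition}), so weak convergence of $\gamma_n$ together with $K(\gamma_n) \to K(\gamma^*)$ gives $\int (1-f_R) c \, d\gamma_n \to a_R := \int (1-f_R) c \, d\gamma^*$; dominated convergence (using $f_R \to 1$ pointwise and $c \in L^1(\gamma^*)$) then forces $a_R \to 0$ as $R \to \infty$.

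To finish, I would choose a diagonal $R_n \to \infty$ slowly enough that each of $J(\tilde\nu_n^{R_n}, \nu_n)$, $J(\tilde\nu_n^{R_n}, \tilde\nu^{*, R_n})$, $J(\tilde\nu^{*, R_n}, \nu^*)$ tends to zero. The four sequences $\{\nu_n\}$, $\{\tilde\nu_n^{R_n}\}$, $\{\tilde\nu^{*, R_n}\}$, $\{\nu^*\}$ are each tight (the truncated measures are dominated by $\mu + \nu_n$ or $\mu + \nu^*$), so iterated applications of Lemma~\ref{lem:assoc+symmetr}, using its symmetry clause to reverse directions when needed, chain the three vanishing $J$'s into $J(\nu_n, \nu^*), J(\nu^*, \nu_n) \to 0$ along the subsequence, and hence along the full sequence. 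The main obstacle I anticipate is the fixed-$R$ step $\tilde\nu_n^R \xrightarrow{J} \tilde\nu^{*, R}$: although the paper's preceding discussion reduces it to invoking Theorem~\ref{thm:bound_weak_to_strong} on the bounded ball (where $\tilde\nu_n^R, \tilde\nu^{*, R}$ coincide with $\mu$ outside and carry equal mass inside), one must carefully balance interior and exterior contributions and then calibrate $R_n$ so that Lemma~\ref{lem:assoc+symmetr} actually receives genuinely vanishing---not merely bounded---inputs.
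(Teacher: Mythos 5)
Your proof is correct and follows essentially the same route as the paper's: the easy implications via Theorem~\ref{thm:strong_to_weak} and the continuity lemma, and for (3)$\Rightarrow$(1) the same combination of optimal plans $\gamma_n \rightharpoonup \hat\gamma \in \Pi(\mu,\nu^*)$, the $f_R$-truncation with the bound $J(\tilde\nu_n,\nu_n) \le K(\gamma_n) - K'_R(\gamma_n)$, and Lemma~\ref{lem:assoc+symmetr} to chain the three vanishing costs. The only difference is bookkeeping: you diagonalize over $R_n \to \infty$, whereas the paper fixes $\epsilon$ and chooses $R$ with $K'_R(\hat\gamma) > K(\hat\gamma) - \epsilon$ --- the same estimate in a different order of quantifiers.
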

	
	\begin{proof}
		Clearly, $(1) \Rightarrow (2) \Rightarrow (3)$. Let us show that $(3) \Rightarrow (1)$. Without loss of generality assume that $J(\mu, \nu_n) < \infty$ for all $n$. Let $\gamma_n \in \Pi(\mu, \nu_n)$ be an optimal transport plan. Since the sequence $\{\gamma_n\}_{n \in \nset}$ is tight, one can extract a subsequence $\gamma_n \rightharpoonup \hat\gamma \in \Pi(\mu, \nu^*)$ (without relabelling). Fix $\epsilon > 0$ and an $R > 0$ such that $K'(\hat\gamma) = K'_R(\hat\gamma) > K(\hat\gamma) - \epsilon$. Using the construction above get transport plans $\Pi(\tilde\nu_n, \nu_n) \owns \tilde\gamma_n \rightharpoonup \tilde\gamma\in \Pi(\tilde\nu^*, \nu^*)$. Therefore $J(\tilde\nu_n, \tilde\nu^*) \to 0$, $J(\tilde\nu^*, \nu^*) \le K(\tilde\gamma) = K(\hat\gamma) - K'(\hat\gamma) < \epsilon$ and
		\begin{multline}
			J(\tilde\nu_n, \nu_n) \le K(\tilde\gamma_n) = K(\gamma_n) - K'(\gamma_n) = J(\mu, \nu_n) - K'(\gamma_n)\\
			{}\to J(\mu, \nu^*) - K'(\hat\gamma) \le K(\hat\gamma) - K'(\hat\gamma) < \epsilon.
		\end{multline}
		So, one can construct such sequences $\{\tilde\nu_n\}_{n \in \nset}$, $\{\tilde\nu^*_n\}_{n \in \nset}$ that 
		\[
		\lim J(\tilde\nu_n, \nu_n) = \lim J(\tilde\nu_n, \tilde\nu^*_n) = \lim J(\tilde\nu^*_n, \nu^*) = 0.
		\]
		All the sequences are tight hence $J(\nu_n, \nu^*) \to 0$ by Lemma~\ref{lem:assoc+symmetr}.
	\end{proof}
	
	\begin{remark}
		Obviously, the arguments of $J(\cdot, \cdot)$ can be simultaneously swapped in each of the conditions (2)--(3) without violating the theorem.
	\end{remark}
	
	Moreover, for the class $E_0$ there also exists the following dual formulation of transportation convergence which is extremely similar to the notion of the weak convergence of measures.
	
	\begin{theorem}\label{thm:dual}
		Take measures $\nu^* \in E_0$ and $\{\nu_n\}_{n \in \nset} \subset \Prsp{X}$. Then $\nu_n \xrightarrow{J} \nu^*$ iff $\int f \diff \nu_n \to \int f \diff \nu^*$ for any continuous function $f(\cdot)$ such that $|f(x)| \le \alpha + \beta c(x, x_0)$ for any $x \in X$ and for some constants $\alpha$, $\beta$. 
	\end{theorem}
	
	\begin{proof}
		\begin{enumerate}
			\item Let $\int f \diff \nu_n \to \int f \diff \nu^*$ for any continuous function $f(\cdot)$ such that $|f(x)| \le \alpha + \beta c(x, x_0)$. Then $\nu_n \rightharpoonup \nu^*$ and for any $x_0 \in X$
			\[J(\nu_n, \delta_{x_0}) = \int c(x, x_0) \diff \nu_n \to \int c(x, x_0) \diff \nu^* = J(\nu^*, \delta_{x_0}).\]
			By Theorem~\ref{thm:criterion} that implies $\nu_n \xrightarrow{J} \nu^*$.
			
			\item Now let $\nu_n \xrightarrow{J} \nu^*$. Then $\nu_n \rightharpoonup \nu^*$ and $\int c(x, x_0) \diff \nu_n \to \int c(x, x_0) \diff \nu^*$. Obviously, it is enough to consider the case when $f(\cdot)$ is nonnegative and $f(x) \le 1 + c(x, x_0)$. Consider functions $f_h(\cdot) \eqset \min\{h, f(\cdot)\} \in C_b(X)$, $h \ge 0$. From weak convergence of $\nu_n$ it follows that $\int f_h \diff \nu_n \to \int f_h \diff \nu^*$ for any $h \ge 0$. On the other hand,
			\begin{multline}
				0 \le \int (f - f_h) \diff \nu_n = \int (f - h)_+ \diff \nu_n \le \int \bigl(1 + c(x, x_0) - h\bigr)_+ \diff \nu_n\\
				{} = \int \bigl(c(x, x_0) - c_{h - 1}(x, x_0)\bigr) \diff \nu_n\\
				{} \xrightarrow[n \to \infty]{} \int \bigl(c(x, x_0) - c_{h - 1}(x, x_0)\bigr) \diff \nu^* \to 0 \text{ as } h \to +\infty.
			\end{multline}
			Consequently, $\int f \diff \nu_n \to \int f \diff \nu^*$. 	
		\end{enumerate}
	\end{proof}

	\begin{corollary}\label{cor:polish}
		Let $X$ be a Polish space. Then the space $(E_0, \tau_J)$ is also Polish. 
	\end{corollary}
	
	\begin{proof}
		Consider the following embedding of $E_0$ in the space $\mathcal{M}_+(X)$ of all finite nonnegative Borel measures on $X$: $\nu \mapsto F(\nu) \eqset \bigl(1 + c(x, x_0)\bigr) \lfloor \nu$. From Theorem~\ref{thm:dual} it follows that $\nu_n \xrightarrow{J} \nu^*$ iff $F(\nu_n) \rightharpoonup F(\nu^*)$. Moreover, the image $F(E_0)$ is weakly closed in $\mathcal{M}_+(X)$: indeed, if $\mu_n \eqset F(\nu_n) \rightharpoonup \mu$, then
		\[\int \frac1{1 + c(x, x_0)} \diff \mu = \lim \int \frac1{1 + c(x, x_0)} \diff \mu_n = \int \diff \nu_n = 1,\]
		thus $\nu \eqset \frac1{1 + c(x, x_0)} \lfloor \mu \in E_0$. Space $\mathcal{M}_+(X)$ endowed with the topology of weak convergence is Polish~\cite[Theorem~8.9.3]{Bogachev2007} and $(E_0, \tau_J)$ is isomorphic to its closed subspace, consequently, it also is Polish.
	\end{proof}
	
	As we have seen, the function $\nu \mapsto \int f \diff \nu$ for $f(\cdot)$ from the class $\bigl(1 + c(\cdot, x_0)\bigr) C_b(X)$ is continuous w.r.t.\ the topology $\tau_J$. The next theorem shows that in some cases it is possible to quantify the modulus of continuity of this function.
	
	\begin{theorem}
		Take function $f(\cdot)$ such that $\bigl|f(x) - f(y)\bigr| \le g\bigl(c(x, y)\bigr)$ for all $x, y \in X$ and for some concave function $g(\cdot)$. Then for any $\mu, \nu \in E_0$ the following inequality holds:
		\[
		\Bigl|\int f \diff \mu - \int f \diff \nu\Bigr| \le g\bigl(J(\mu, \nu)\bigr).
		\]
	\end{theorem}
	
	\begin{proof}
		Let $\gamma$ be an optimal transport plan from $\mu$ to $\nu$. Then by Jensen's inequality
		\begin{multline}
			\Bigl|\int f \diff \mu - \int f \diff \nu\Bigr| = \Bigl|\int f(x) \diff \gamma - \int f(y) \diff \gamma\Bigr| \le \int \bigl|f(x) - f(y)\bigr| \diff \gamma\\
			{} \le  \int g\bigl(c(x, y)\bigr) \diff \gamma \le g\Bigl(\int c \diff \gamma\Bigr) = g\bigl(J(\mu, \nu)\bigr).\qedhere
		\end{multline}
	\end{proof}
	
	For example, if $c(x, y) = \rho^p(x, y)$ where $p \ge 1$, and $f(\cdot)$ is Lipschitz continuous with a constant $L$ then $\bigl|\int f \diff \mu - \int f \diff \nu\bigr| \le L J(\mu, \nu)^{1 / p}$. Moreover, in this case $J(\mu, \nu)^{1 / p}$ itself is a metric on \Prsp{X} (see Section~\ref{sec:euclidean}), thus the function $\mu \mapsto \int f \diff \mu$ is also $L$-Lipschitz.
	
	Let us now show that, under Assumption~\ref{asm:general_growth_condition} and local compactness of~$X$, any class $E(\mu_0)$ endowed with the transportation topology is a Radon space. In order to prove this, we show that $\bigl(E(\mu_0), \tau_J\bigr)$ is separable, metrizable, and that any probability measure on this space is tight.
	
	\begin{lemma}\label{lem:separable}
		Take an arbitrary measure $\mu_0 \in \Prsp{X}$. The equivalence class $E(\mu_0)$ endowed with the topology $\tau_J$ is separable and metrizable.
	\end{lemma}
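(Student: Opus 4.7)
The plan is to realize $E(\mu_0)$ as the $\tau_J$-closure of a countable union of sets $E_R$, each consisting of probability measures that agree with $\mu_0$ outside a large cost-ball. Fix a reference point $x_0\in X$ and, for each $R\in\nset$, let $C_R:=\overline{B^c_{R+1}(x_0)}$; applying the inequality $c(x,y)\le A+B\bigl(c(z,x)+c(z,y)\bigr)$ from Assumption~\ref{asm:general_growth_condition} with $z=x_0$ gives $M_R:=\sup_{x,y\in C_R}c(x,y)\le A+2B(R+1)<\infty$. Define
\[E_R := \bigl\{\nu\in\Prsp{X}: (X\setminus C_R)\lfloor\nu = (X\setminus C_R)\lfloor\mu_0\bigr\}.\]

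First I would show that each $E_R$ is $\tau_J$-separable. Any $\nu\in E_R$ decomposes as $(X\setminus C_R)\lfloor\mu_0+C_R\lfloor\nu$, where $C_R\lfloor\nu$ has fixed mass $\alpha:=\mu_0(C_R)$ and support in $C_R$. For $\mu,\nu\in E_R$, the transport plan which is the diagonal on $X\setminus C_R$ and an optimal plan between $C_R\lfloor\mu$ and $C_R\lfloor\nu$ lies in $\Pi(\mu,\nu)$ and has cost at most $M_R\alpha$, whence $E_R\subset E(\mu_0)$; combined with Theorem~\ref{thm:bound_weak_to_strong} applied to the $C_R$-restrictions, the same construction shows that on $E_R$ the topology $\tau_J$ coincides with the weak topology. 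Since $C_R$ is a separable metric space, $\Prsp{C_R}$ is weakly separable, so $E_R$ is $\tau_J$-separable.

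Second I would prove that $\bigcup_{R\in\nset}E_R$ is $\tau_J$-dense in $E(\mu_0)$. Fix $\mu\in E(\mu_0)$ and an optimal $\gamma\in\Pi(\mu_0,\mu)$, and apply the truncation construction from the discussion preceding Theorem~\ref{thm:criterion}:
\[\tilde\gamma_R := \gamma - f_R\lfloor\gamma + (\pi^y\times\pi^y)_\#(f_R\lfloor\gamma),\qquad \tilde\nu_R := (\pi^x)_\#\tilde\gamma_R.\]
The correction measures $(\pi^x)_\#(f_R\lfloor\gamma)$ and $(\pi^y)_\#(f_R\lfloor\gamma)$ are both supported in $B^c_{R+1}(x_0)$, so $\tilde\nu_R$ agrees with $\mu_0$ outside $B^c_{R+1}(x_0)$ and hence lies in $E_R$. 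Moreover $\tilde\gamma_R\in\Pi(\tilde\nu_R,\mu)$ with $K(\tilde\gamma_R)=K(\gamma)-K'_R(\gamma)\to 0$ as $R\to\infty$ by dominated convergence, since $f_R\to 1$ pointwise and $c$ is $\gamma$-integrable. Thus $J(\tilde\nu_R,\mu)\to 0$, so by Theorem~\ref{thm:strong_to_weak} the sequence $\{\tilde\nu_R\}$ converges weakly to $\mu$ and in particular is tight. Applying Lemma~\ref{lem:assoc+symmetr} to the tight sequences $\tilde\nu_R$, $\mu$, $\mu$ (the last two constant) then yields the reverse direction $J(\mu,\tilde\nu_R)\to 0$, so $\tilde\nu_R\xrightarrow{J}\mu$.

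Picking a countable $\tau_J$-dense subset $D_R$ of each $E_R$ and setting $D:=\bigcup_R D_R$ produces a countable $\tau_J$-dense subset of $E(\mu_0)$. The step I expect to require the most care is the two-sided $\tau_J$-convergence of $\tilde\nu_R$ to $\mu$: the truncation construction only directly yields $J(\tilde\nu_R,\mu)\to 0$, and recovering $J(\mu,\tilde\nu_R)\to 0$ relies on combining the tightness supplied by Theorem~\ref{thm:strong_to_weak} with Lemma~\ref{lem:assoc+symmetr}. The minor technical point is verifying that on $E_R$ the topologies $\tau_J$ and weak convergence coincide, which uses the boundedness of $c$ on $C_R$ ensured by Assumption~\ref{asm:general_growth_condition}.
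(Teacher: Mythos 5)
Your proof is correct and follows essentially the same route as the paper's: approximate $\mu \in E(\mu_0)$ by the truncated measures $\tilde\nu_R$ agreeing with $\mu_0$ outside a cost-ball, and note that the measures of this type for fixed $R$ form a $\tau_J$-separable set because the cost is bounded on the ball, so weak convergence there upgrades to $J$-convergence (the paper simply writes down the countable family $\mathcal{S}_{\mu_0}$ of measures that are atomic with rational weights inside the ball, rather than invoking weak separability of $\Prsp{C_R}$ abstractly). Your explicit use of Lemma~\ref{lem:assoc+symmetr} to convert $J(\tilde\nu_R,\mu)\to 0$ into the two-sided convergence $\tilde\nu_R\xrightarrow{J}\mu$, which is what density with respect to the balls $B_r^J(\mu)$ actually requires, is a point the paper's own proof glosses over, and you handle it correctly.
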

	
	\begin{proof}
		\begin{enumerate}
			\item Let $\rho_w(\cdot, \cdot)$ be a metric on \Prsp{X} that induces weak convergence. Then
			\[\rho_J(\mu, \nu) \eqset \rho_w(\mu, \nu) + \bigl|J(\mu, \mu_0) - J(\nu, \mu_0)\bigr|\]
			is also a metric and, obviously, $\mu_n \xrightarrow{J} \mu^* \in E(\mu_0)$ iff $\rho_J(\mu_n, \mu^*) \to 0$ by Theorem~\ref{thm:criterion}.
	
			\item Let $\mathcal{S}_{\mu_0}$ be a countable family of measures of type $\nu \eqset \bigl(X \setminus B_m(x_0)\bigr)\lfloor \mu_0 + \alpha \sum_{i = 1}^n p_i \delta_{x_i}$ where $m, n\in \nset$, $p_i\in \qset_+$, $x_i$ belong to some countable dense subset of $X$ and $\alpha$ is a normalizing constant. Fix measure $\mu\in C(\mu_0)$, $\epsilon > 0$ and such $R > 0$ that $\int_{B_R(x_0) \times B_R(x_0)} c(x, y) \diff \gamma > K(\gamma) - \epsilon$, where $\gamma$ is an optimal transport plan from $\mu_0$ to $\mu$. Thus $K'(\gamma) \eqset K(f_R\lfloor \gamma) > K(\gamma) - \epsilon$ and $J(\tilde\mu, \mu) \le K(\gamma) - K'(\gamma) < \epsilon$. But $\tilde\mu$ obviously lie in the weak closure of $\mathcal{S}_{\mu_0}$, so there exists a sequence $\mathcal{S}_{\mu_0} \owns \nu_n \xrightarrow{J} \tilde\mu$ and $\lim J(\nu_n, \mu) = J(\tilde\mu, \nu) < \epsilon$. Consequently, $\mathcal{S}_{\mu_0}$ is a dense set in $E(\mu_0)$.
		\end{enumerate}
	\end{proof}
	
	The following assumption about the local compactness of the space~$X$ allows us to obtain the weak local compactness of \Prsp{X}.
	
	\begin{assumption}\label{asm:loc_compact}
		For some (and therefore for any) $x_0 \in X$ any ``closed ball'' $\overline{B}_r^c(x_0) \eqset \bigl\{y \in X: c(x_0, y) \le r\bigr\}$ is compact.
	\end{assumption}
	
	Notice that under this assumption from $c(x, y) = 0$ iff $x = y$ it follows that $c(x, x_n) \to 0$ iff $c(x_n, x) \to 0$ iff $x_n \to x$. Moreover, $X$ is locally-compact and therefore separability of the space implies that it is a Radon space without any additional assumption.
	
	\begin{lemma}\label{lem:tight}
		Let a sequence $\{\nu_n\}_{n \in \nset}$ be such that $\limsup J(\mu, \nu_n) < \infty$ for some $\mu \in \Prsp{X}$; then the sequence is tight.
	\end{lemma}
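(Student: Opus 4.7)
The plan is to use tightness of $\mu$ together with the weak triangle inequality (Assumption~\ref{asm:general_growth_condition}) to transfer mass control from $\mu$ to $\nu_n$ along an optimal transport plan, and then invoke Assumption~\ref{asm:loc_compact} to convert the resulting support estimate into a genuine compact set. The guiding intuition is: if $(x, y)$ lies in the support of an optimal plan $\gamma_n \in \Pi(\mu, \nu_n)$ with $x$ in a fixed tight compact $D$ and $c(x, y)$ not too large, then $c(x_0, y)$ is also bounded for a fixed reference point $x_0$, so $y$ belongs to a closed $c$-ball that is compact by Assumption~\ref{asm:loc_compact}.

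Concretely, I would fix $C > \limsup J(\mu, \nu_n)$ so that $J(\mu, \nu_n) \le C$ for all $n \ge N_0$, pick a compact $D \subset X$ with $\mu(X \setminus D) < \epsilon / 2$ (using that $\mu$ is tight as $X$ is Radon), and set $R := \max_{x \in D} c(x_0, x)$, which is finite by continuity of $c(x_0, \cdot)$. A Chebyshev estimate applied to an optimal plan $\gamma_n \in \Pi(\mu, \nu_n)$ gives $\gamma_n\bigl(\{(x, y): c(x, y) > r\}\bigr) \le K(\gamma_n) / r = J(\mu, \nu_n) / r \le C / r$. The second form of the weak triangle inequality yields $c(x_0, y) \le A + B\bigl(c(x_0, x) + c(x, y)\bigr) \le A + B (R + r)$ on the set $\{(x, y): x \in D,\, c(x, y) \le r\}$, so the $y$-marginal of $\gamma_n$ restricted to this set is supported in the compact ball $K := \overline{B}_{A + B(R + r)}^c(x_0)$. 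Estimating $\nu_n(X \setminus K) = \gamma_n\bigl(X \times (X \setminus K)\bigr) \le \mu(X \setminus D) + \gamma_n(\{c(x,y) > r\})$ and choosing $r$ so large that $C/r < \epsilon/2$ then gives $\nu_n(X \setminus K) < \epsilon$ uniformly for $n \ge N_0$.

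The remaining finitely many indices $n < N_0$ are handled by individual tightness of each $\nu_n$ (again since $X$ is Radon): pick compacts $K_n$ with $\nu_n(X \setminus K_n) < \epsilon$ and replace $K$ by $K \cup K_1 \cup \dots \cup K_{N_0 - 1}$, which is still compact. The only real obstacle is arranging the compact set to be independent of $n$; this is resolved by the uniform bound $J(\mu, \nu_n) \le C$ making the Chebyshev estimate uniform, combined with Assumption~\ref{asm:loc_compact} turning closed $c$-balls into genuine compact sets. No technical subtleties beyond this are expected.
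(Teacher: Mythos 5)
Your proof is correct and follows essentially the same route as the paper's: both arguments transport the $\epsilon/2$-mass estimate from a compact set carrying most of $\mu$ through an optimal plan, use the weak triangle inequality of Assumption~\ref{asm:general_growth_condition} to bound $c(x_0, y)$, and invoke Assumption~\ref{asm:loc_compact} to get a compact $c$-ball independent of $n$. You are merely more explicit than the paper about the Chebyshev step, where the triangle inequality enters, and the finitely many initial indices.
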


	\begin{proof}
		Fix $\epsilon > 0$ and a ball $\overline{B}_r^c(x_0)$ such that $\mu\bigl(X \setminus \overline{B}_r^c(x_0)\bigr) < \epsilon / 2$. Consider $R > r$ and any measure $\lambda_R$ such that $\lambda_R\bigl(X \setminus B_R^c(x_0)\bigr) \ge \epsilon$. Then
		\[
		J(\mu, \lambda_R)\ge \frac{\epsilon}2 \inf\bigl\{c(x, y): x\in \overline{B}_r^c(x_0),\, y\notin B_R^c(x_0)\bigr\}\to \infty \text{ as } R \to \infty.
		\]
		But $\limsup J(\mu, \nu_n) < \infty$ and therefore one can find such a compact ball $\overline{B}_{R_\epsilon}^c(x_0)$ that $\nu_n(X \setminus \overline{B}_{R_\epsilon}^c(x_0)) < \epsilon$ for all $n$, i.e. $\{\nu_n\}_{n \in \nset}$ is a tight sequence.
	\end{proof}
	
	\begin{corollary}
		Any ``closed ball'' $\overline{B}_r^J(\mu) \eqset \bigl\{\nu \in \Prsp{X}: J(\mu, \nu) \le r\bigr\}$ is weakly compact.
	\end{corollary}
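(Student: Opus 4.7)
The plan is to combine two ingredients already available: Lemma~\ref{lem:tight}, which controls tightness of families with bounded $J$-distance to~$\mu$, and Lemma~\ref{lem:l.s.c.}, which gives lower semicontinuity of~$J$ under weak convergence. By Prokhorov's theorem, weak compactness of a subset of~$\Prsp{X}$ reduces to checking (i) relative weak compactness (i.e.\ tightness) and (ii) weak closedness.

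For (i), I would apply Lemma~\ref{lem:tight} uniformly: every $\nu \in \overline{B}_r^J(\mu)$ satisfies $J(\mu, \nu) \le r$, so the constant sequence bound $\limsup J(\mu, \nu) \le r < \infty$ puts the entire ball into the hypothesis of Lemma~\ref{lem:tight}. Inspecting its proof, the compact ball $\overline{B}_{R_\epsilon}^c(x_0)$ selected there depends only on $\epsilon$, on $\mu$, and on the uniform upper bound~$r$ on $J(\mu, \nu_n)$; it does not depend on the individual sequence. Hence the same $\overline{B}_{R_\epsilon}^c(x_0)$ works for the whole family, giving uniform tightness of $\overline{B}_r^J(\mu)$, and Prokhorov's theorem yields relative weak compactness.

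For (ii), take any sequence $\{\nu_n\} \subset \overline{B}_r^J(\mu)$ with $\nu_n \rightharpoonup \nu^*$. By Lemma~\ref{lem:l.s.c.} applied with the constant first argument~$\mu$,
\begin{equation}
J(\mu, \nu^*) \le \liminf_n J(\mu, \nu_n) \le r,
\end{equation}
so $\nu^* \in \overline{B}_r^J(\mu)$, proving weak closedness. Combining (i) and (ii), the set is weakly compact.

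There is no substantive obstacle here; the only point to check carefully is that the tightness bound in Lemma~\ref{lem:tight} is uniform over the whole ball rather than merely sequential, which is immediate from the proof since the radius~$R_\epsilon$ is controlled by the uniform constant~$r$ alone.
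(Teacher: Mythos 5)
Your proof is correct and follows essentially the same route as the paper: tightness from Lemma~\ref{lem:tight} plus Prokhorov, and weak lower semicontinuity of $J(\mu,\cdot)$ from Lemma~\ref{lem:l.s.c.} to keep the limit in the ball. The paper simply phrases it sequentially (extract a weakly convergent subsequence from an arbitrary sequence in the ball), which sidesteps the need to check that the radius $R_\epsilon$ in Lemma~\ref{lem:tight} is uniform over the whole ball, but your uniformity observation is also valid.
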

	
	\begin{proof}
		Consider a sequence $\{\nu_n\}_{n \in \nset} \subset \overline{B}_r^J(\mu)$. It is tight by the Lemma~\ref{lem:tight} and hence there is weakly convergent subsequence $\nu_{n_k} \rightharpoonup \nu^*$. By lemma~\ref{lem:l.s.c.} $J(\mu, \nu^*) \le \liminf J(\mu, \nu_{n_k}) \le r$ thus $\nu^* \in \overline{B}^J_r(\mu)$.
	\end{proof}
	
	Let us show that space \Prsp{X} is ``complete'' with respect to the Monge--Kantorovich distance.
	
	\begin{lemma}\label{lem:complete}
		Let $\{\nu_n\}_{n \in \nset} \subset \Prsp{X}$ be a sequence such that $J(\nu_n, \nu_m) \to 0$ as $n \ge m \to \infty$; then $\nu_n \xrightarrow{J} \nu^*$ for some $\nu^* \in \Prsp{X}$.
	\end{lemma}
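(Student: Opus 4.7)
The plan is to extract a weak subsequential limit of $\{\nu_n\}$ and then upgrade this to $\tau_J$-convergence of the full sequence via the criterion of Theorem~\ref{thm:criterion}.

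First I would establish tightness of $\{\nu_n\}_{n \in \nset}$. The Cauchy-type hypothesis yields an index $N$ with $J(\nu_n, \nu_N) \le 1$ for all $n \ge N$. Applying the weak triangle inequality of Lemma~\ref{lem:general_weak_triangle} with $\mu = \nu_N$, $\nu = \nu_n$, $\lambda = \nu_N$ produces
\[J(\nu_N, \nu_n) \le A + B\bigl(J(\nu_N, \nu_N) + J(\nu_n, \nu_N)\bigr) \le A + B\]
for $n \ge N$, so $\limsup_n J(\nu_N, \nu_n) < \infty$. Lemma~\ref{lem:tight} then gives tightness of the tail, and therefore of the whole sequence.

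By Prokhorov's theorem one extracts a weakly convergent subsequence $\nu_{n_k} \rightharpoonup \nu^* \in \Prsp{X}$. Fix $\epsilon > 0$, pick $M$ large enough that $J(\nu_n, \nu_m) < \epsilon$ whenever $n \ge m \ge M$, and fix any $m \ge M$. For all sufficiently large $k$ one has $n_k \ge m$, so Lemma~\ref{lem:l.s.c.} (applied to the sequence $\nu_{n_k} \rightharpoonup \nu^*$ together with the constant sequence $\nu_m$) gives
\[J(\nu^*, \nu_m) \le \liminf_k J(\nu_{n_k}, \nu_m) \le \epsilon.\]
Since $\epsilon$ was arbitrary, this shows $J(\nu^*, \nu_m) \to 0$ as $m \to \infty$.

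With this in hand, Theorem~\ref{thm:strong_to_weak} immediately yields $\nu_n \rightharpoonup \nu^*$ for the full sequence. Taking $\mu := \nu^* \in E(\nu^*)$, one has $J(\mu, \nu_n) = J(\nu^*, \nu_n) \to 0 = J(\nu^*, \nu^*) = J(\mu, \nu^*) < \infty$, so condition~(3) of Theorem~\ref{thm:criterion} is satisfied. The criterion therefore promotes this to $\nu_n \xrightarrow{J} \nu^*$, as required. The only genuinely delicate point in the argument is the tightness step, since the Cauchy hypothesis controls $J(\nu_n, \nu_m)$ only in one argument order; once Assumption~\ref{asm:general_growth_condition} is used to restore the other direction, the rest is a standard subsequence-plus-criterion routine.
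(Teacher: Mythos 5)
Your proof is correct and follows essentially the same route as the paper: tightness via Lemma~\ref{lem:tight}, extraction of a weak subsequential limit, lower semicontinuity against the tail $\nu_m$ to get $J(\nu^*,\nu_m)\to 0$, and then the upgrade to $\tau_J$-convergence. In fact you are slightly more careful than the paper at the two points it glosses over — using Lemma~\ref{lem:general_weak_triangle} to flip the argument order before invoking Lemma~\ref{lem:tight}, and using Theorem~\ref{thm:criterion} to turn the one-sided convergence $J(\nu^*,\nu_n)\to 0$ into full convergence in $\tau_J$.
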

	
	\begin{proof}
		Since $\{\nu_n\}_{n \in \nset}$ is a tight sequence by Lemma~\ref{lem:tight} there exists a subsequence $\nu_{n_k} \rightharpoonup \nu^*\in \Prsp{X}$. Assume $\nu_{n_k}$ fails to converge to $\nu^*$; then one can choose such $\epsilon > 0$, $N \in \nset$ that $J(\nu^*, \nu_N) > \epsilon$ and $J(\nu_n, \nu_N) < \epsilon$ for $n \ge N$. But from weak lower semicontinuity of $J(\cdot, \cdot)$ it holds $\epsilon \ge \liminf J(\nu_{n_k}, \nu_N) \ge J(\nu^*, \nu_N) > \epsilon$ what leads to a contradiction. Thereby $\nu_n \xrightarrow{J} \nu^*$.
	\end{proof}
	
	Now it is enough to show that any Borel probability measure over $E(\mu_0)$ is tight in order to prove that $\bigl(E(\mu_0), \tau_J\bigr)$ is a Radon space. Let us adapt to our case the proof for Polish space from~\cite[Theorem~7.1.7]{Bogachev2007}.
	
	\begin{theorem}\label{thm:radon}
		For any $\mu_0 \in \Prsp{X}$ the class $E(\mu_0)$ endowed with the transportation topology $\tau_J$ is a Radon space.
	\end{theorem}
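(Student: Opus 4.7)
The plan is to imitate the classical proof that any Borel probability measure on a complete separable metric space is Radon, using Lemmas~\ref{lem:separable}, \ref{lem:metrizable}, and~\ref{lem:complete} as the $\tau_J$-analogues of separability, metrizability, and completeness. Given a Borel probability $P$ on $\bigl(E(\mu_0), \tau_J\bigr)$ and $\epsilon > 0$, I aim to exhibit a $\tau_J$-compact set $K \subset E(\mu_0)$ with $P(K) > 1 - \epsilon$.

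First I would localize in $J$. The closed balls $\overline{B}_R^J(\mu_0)$ are $\tau_J$-closed by Corollary~\ref{cor:topology}, and their union is $E(\mu_0)$, so by $\sigma$-additivity one can choose $R_0$ with $P\bigl(\overline{B}_{R_0}^J(\mu_0)\bigr) > 1 - \epsilon/2$. Next, fix a countable $\tau_J$-dense set $\mathcal{S} \subset E(\mu_0)$ from Lemma~\ref{lem:separable}. For every $n \in \nset$ the balls $\bigl\{B_{1/n}^J(\mu) : \mu \in \mathcal{S}\bigr\}$ cover $E(\mu_0)$, so I select a finite subfamily $\mathcal{F}_n \subset \mathcal{S}$ with
\[P\Bigl(\bigcup_{\mu \in \mathcal{F}_n} \overline{B}_{1/n}^J(\mu)\Bigr) > 1 - \frac{\epsilon}{2^{n+1}},\]
and set
\[K := \overline{B}_{R_0}^J(\mu_0) \cap \bigcap_{n \in \nset} \bigcup_{\mu \in \mathcal{F}_n} \overline{B}_{1/n}^J(\mu).\]
Then $P(K) > 1 - \epsilon$, and $K$ is $\tau_J$-closed, $J$-bounded, and \emph{$J$-totally bounded} in the sense that it is covered by finitely many $J$-balls of radius $1/n$ for every $n$.

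It remains to prove that $K$ is $\tau_J$-compact; by Lemma~\ref{lem:metrizable} it suffices to check sequential compactness. Take any $\{\nu_m\} \subset K$. Since $K \subset \overline{B}_{R_0}^J(\mu_0)$, Lemma~\ref{lem:tight} shows that $\{\nu_m\}$ is tight as a sequence of measures on $X$. A standard diagonal argument through the finite covers $\mathcal{F}_n$ produces an unrelabelled subsequence together with centers $\mu_n^* \in \mathcal{F}_n$ such that $J(\mu_n^*, \nu_m) < 1/n$ whenever $m \ge m_n$. Applying the weak triangle inequality of Lemma~\ref{lem:general_weak_triangle} to $(\mu_0, \mu_n^*, \nu_{m_n})$ uniformly bounds $J(\mu_0, \mu_n^*)$, so by Lemma~\ref{lem:tight} the sequence $\{\mu_n^*\}$ is tight as well.

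The main obstacle is that $J$ lacks a bona fide triangle inequality, so one cannot directly deduce from $J(\mu_n^*, \nu_m) < 1/n$ that $\{\nu_m\}$ is $J$-Cauchy; the additive constant $A$ in Assumption~\ref{asm:general_growth_condition} would otherwise spoil the bound. I would circumvent this with Lemma~\ref{lem:assoc+symmetr}: applied to the tight triples $\bigl(\nu_{m_1(n)}, \mu_n^*, \nu_{m_2(n)}\bigr)$ with $m_1(n), m_2(n) \ge m_n$ tending to infinity, it forces $J(\nu_{m_1(n)}, \nu_{m_2(n)}) \to 0$, showing that the diagonal subsequence is $J$-Cauchy. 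Lemma~\ref{lem:complete} then yields a $\tau_J$-limit $\nu^* \in \Prsp{X}$, Lemma~\ref{lem:l.s.c.} gives $J(\mu_0, \nu^*) \le R_0 < \infty$ so $\nu^* \in E(\mu_0)$, and the $\tau_J$-closedness of $K$ places $\nu^*$ in $K$. Hence $K$ is sequentially compact, $P$ is tight, and $\bigl(E(\mu_0), \tau_J\bigr)$ is a Radon space.
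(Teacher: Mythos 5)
Your proof is correct and follows essentially the same route as the paper, which simply invokes the classical tightness argument of \cite[Theorem~7.1.7]{B_en} together with Lemma~\ref{lem:precompactness}; your construction of the totally bounded set $K$ and your diagonal argument (tightness via Lemma~\ref{lem:tight}, then Lemma~\ref{lem:assoc+symmetr} to get a $J$-Cauchy subsequence, then Lemma~\ref{lem:complete}) is exactly the content of that cited proof and of Lemma~\ref{lem:precompactness}, written out explicitly.
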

	
	\begin{lemma}[compactness in $\tau_J$]\label{lem:precompactness}
		Let a closed set $\mathcal{H} \subset E(\mu_0)$ be totally bounded with respect to the transportation cost $J(\cdot, \cdot)$, i.e.\ for any $\epsilon > 0$ let there exist measures $\mu_1^\epsilon, \dots, \mu_n^\epsilon$ such that $\mathcal{H} \subset \bigcup_{i = 1}^n B_\epsilon^J(\mu_i^\epsilon)$; then $\mathcal{H}$ is compact in the transportation topology.
	\end{lemma}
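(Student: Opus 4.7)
The plan is to prove sequential compactness of $\mathcal{H}$, which suffices since $\bigl(E(\mu_0), \tau_J\bigr)$ is metrizable by Lemma~\ref{lem:metrizable}. The argument has three stages: (i) tightness of $\mathcal{H}$; (ii) extraction of a $J$-Cauchy subsequence from any sequence in $\mathcal{H}$; (iii) identification of the limit via Lemma~\ref{lem:complete} and closedness of $\mathcal{H}$.

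For (i), total boundedness gives a finite cover of $\mathcal{H}$ by unit $J$-balls with centers $\mu_1, \dots, \mu_N$. For every center $\mu_i$ whose ball meets $\mathcal{H}$, picking $\nu_0 \in B_1^J(\mu_i) \cap \mathcal{H}$ and applying Lemma~\ref{lem:general_weak_triangle} yields $J(\mu_0, \mu_i) \le A + B\bigl(J(\mu_0, \nu_0) + J(\mu_i, \nu_0)\bigr) < \infty$, so $\mu_i \in E(\mu_0)$. A second application then bounds $J(\mu_0, \nu) \le A + B\bigl(J(\mu_0, \mu_i) + J(\mu_i, \nu)\bigr)$ uniformly over $\nu \in \mathcal{H}$, and Lemma~\ref{lem:tight} delivers tightness.

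For (ii), given $\{\nu_n\} \subset \mathcal{H}$ I would iterate the $(1/m)$-covers and diagonalize, producing a subsequence $\{\nu_{n_k}\}$ and centers $\eta_m$ with $J(\eta_m, \nu_{n_k}) < 1/m$ for all $k \ge m$. The weak triangle inequality applied to $(\mu_0, \eta_m, \nu_{n_m})$ uniformly bounds $J(\mu_0, \eta_m)$, so $\{\eta_m\}$ is tight by Lemma~\ref{lem:tight}. Lemma~\ref{lem:assoc+symmetr} applied with $\mu_m = \eta_m$, $\nu_m = \lambda_m = \nu_{n_m}$ then gives $J(\nu_{n_m}, \eta_m) \to 0$, and a second application with $\mu_l = \nu_{n_l}$, $\nu_l = \eta_l$, $\lambda_l = \nu_{n_{j(l)}}$ for any choice $j(l) \ge l$ yields $J(\nu_{n_l}, \nu_{n_{j(l)}}) \to 0$. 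A routine extraction-by-contradiction argument upgrades this to $\sup_{j \ge l} J(\nu_{n_l}, \nu_{n_j}) \to 0$, which is the hypothesis of Lemma~\ref{lem:complete}.

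For (iii), Lemma~\ref{lem:complete} supplies $\nu^* \in \Prsp{X}$ with $\nu_{n_k} \xrightarrow{J} \nu^*$; the weak triangle inequality gives $J(\mu_0, \nu^*) \le A + B\bigl(J(\mu_0, \nu_{n_k}) + J(\nu^*, \nu_{n_k})\bigr) < \infty$ for $k$ large, so $\nu^* \in E(\mu_0)$, and closedness of $\mathcal{H}$ in $\bigl(E(\mu_0), \tau_J\bigr)$ forces $\nu^* \in \mathcal{H}$. The main obstacle is precisely step (ii): the additive constant $A$ in Lemma~\ref{lem:general_weak_triangle} rules out a direct Cauchy argument from balls of shrinking $J$-radius, and one is forced to route through Lemma~\ref{lem:assoc+symmetr}, whose sharpened local triangle inequality (Lemma~\ref{lem:compact_growth_condition}) absorbs the constant $A$ by exploiting tightness.
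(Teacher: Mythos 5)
Your proof is correct and follows essentially the same route as the paper's: a diagonal argument over the shrinking covers, tightness via Lemma~\ref{lem:tight}, Lemma~\ref{lem:assoc+symmetr} to produce the Cauchy condition, and Lemma~\ref{lem:complete} together with closedness to identify the limit in $\mathcal{H}$. You supply more detail than the paper does (in particular, on the tightness of the centers and on why the additive constant $A$ forces the detour through Lemma~\ref{lem:assoc+symmetr}), but the argument is the same.
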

	
	\begin{proof}
		Consider an arbitrary sequence $\{\nu_n\}_{n \in \nset} \subset \mathcal{H}$. Using Cantor's diagonal argument and conditions of the lemma one can find a sequence of measures $\{\mu_n\}_{n \in \nset}$ and a subsequence of $\{\nu_n\}_{n \in \nset}$ such that (without relabelling) $J(\mu_n, \nu_k) < 1 / n$ for all $k \ge n$. Then $\{\nu_n\}_{n \in \nset}$ and $\{\mu_n\}_{n \in \nset}$ are tight by Lemma~\ref{lem:tight} and therefore $J(\nu_n, \nu_m) \to 0$ as $m, n \to \infty$ by Lemma~\ref{lem:assoc+symmetr}. Now it follows from Lemma~\ref{lem:complete} that there exists $\nu^* \in \mathcal{H}$ such that $\nu_n \xrightarrow{J} \nu^*$.
	\end{proof}
	
	Now one can prove Theorem~\ref{thm:radon} in the same way as~\cite[Theorem~7.1.7]{Bogachev2007}. The only necessary change regards the criterion of compactness in $\tau_J$, which is considered in Lemma~\ref{lem:precompactness}.

	\section{The case of \texorpdfstring{$\rset^d$}{Rd}}\label{sec:euclidean}
	
	Now consider the locally compact Polish space $X = \rset^d$ with the Euclidean metric and take $c(x, y) = g(x - y)$, where the function $g(\cdot)$ is convex, $g(0) = 0$ and $g(x) > 0$ whenever $x \neq 0$. Obviously, this cost function is continuous and \textit{consistent} in the sense that $c(x, x_n) \to 0$ iff $c(x_n, x) \to 0$ iff $x_n \to x$. Moreover, the space and the cost function satisfy Assumption~\ref{asm:loc_compact}.
	
	Let us assume the following inequality holds:
	\begin{equation}\label{eq:vector_strong_growth_condition}
		B \eqset \sup_{x, y} \frac{g(x + y)}{g(x) + g(y)} < \infty.
	\end{equation}
	Notice that $B \ge 1$ due to convexity of the function $g(\cdot)$.
	
	\begin{theorem}\label{thm:vector_triangle}
		Let inequality~\eqref{eq:vector_strong_growth_condition} hold. Then there exists $q \ge 1$ such that $g^{1/q}(\cdot)$ satisfies the triangle inequality:
		\begin{equation}
			g^{1 / q}(x + y) \le g^{1 / q}(x) + g^{1 / q}(y)
		\end{equation}
		for any $x, y \in \rset^d$
	\end{theorem}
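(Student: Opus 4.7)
The plan is to take $q := 2B - 1 \ge 1$ and to proceed in two steps: first establish the one-sided power scaling
\[g(\lambda x) \le \lambda^q g(x), \qquad \lambda \ge 1,\ x \in \rset^d,\]
and then combine this with the convexity of $g$ by writing $x+y$ as an appropriate convex combination of rescaled copies of $x$ and $y$.

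For the scaling step, fix $x \neq 0$ and consider $\phi(\lambda) := g(\lambda x)$, a convex function on $[0, \infty)$ with $\phi(0) = 0$ and $\phi(\lambda) > 0$ for $\lambda > 0$. Substituting $u = v = \lambda x$ in \eqref{eq:vector_strong_growth_condition} yields $\phi(2\lambda) \le 2B\, \phi(\lambda)$. Since $\phi$ is convex, its right derivative $\phi'_+(\lambda)$ exists everywhere and is bounded above by the chord slope from $\lambda$ to $2\lambda$, so
\[\phi'_+(\lambda) \le \frac{\phi(2\lambda) - \phi(\lambda)}{\lambda} \le (2B - 1)\,\frac{\phi(\lambda)}{\lambda}.\]
A short computation then shows that $\psi(\lambda) := \phi(\lambda)/\lambda^q$ has non-positive right derivative and is therefore non-increasing on $(0, \infty)$; evaluating at $\lambda \ge 1$ gives $\phi(\lambda) \le \phi(1)\, \lambda^q$, which is the claimed scaling. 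The delicate point here is that $\phi$ need not be smooth, but convexity guarantees one-sided derivatives everywhere together with local absolute continuity, which is what makes the Gronwall-style argument rigorous; this is the main obstacle in the proof.

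For the triangle inequality itself, set $a := g(x)^{1/q}$ and $b := g(y)^{1/q}$. If either vanishes the inequality is immediate (since $g(z) = 0$ forces $z = 0$), so assume $a, b > 0$. Writing
\[x + y \;=\; \frac{a}{a+b} \cdot \frac{a+b}{a}\, x \;+\; \frac{b}{a+b} \cdot \frac{a+b}{b}\, y\]
as a convex combination and invoking the convexity of $g$ together with the scaling step (applied with the scale factors $(a+b)/a,\,(a+b)/b \ge 1$), one obtains
\[g(x+y) \le \frac{a}{a+b}\Bigl(\frac{a+b}{a}\Bigr)^q g(x) + \frac{b}{a+b}\Bigl(\frac{a+b}{b}\Bigr)^q g(y) = (a+b)^q,\]
where I have used $g(x) = a^q$ and $g(y) = b^q$. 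Taking $q$-th roots delivers the desired bound $g^{1/q}(x+y) \le g^{1/q}(x) + g^{1/q}(y)$.
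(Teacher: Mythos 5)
Your proof is correct, and it takes a genuinely different route from the paper's. The paper works directly on the sum: writing $g(y)=\xi g(x)$ with $\xi\le 1$, it combines the convexity bound $g(x+y)\le\frac{n-1}{n}g(x)+\frac1n g(x+ny)$ with the iterated doubling estimate $g(2^k y)\le(2B)^k g(y)$, chooses $k$ as a function of $\xi$ to reach $g(x+y)\le g(x)\bigl(1+3B\xi^{1/q_0}\bigr)$ with $q_0=\log_2(2B)$, and then must take $q=\max\{3B,q_0\}$ large enough to absorb the constant $3B$ when passing to $q$-th roots. You instead isolate a clean homogeneity estimate $g(\lambda x)\le\lambda^q g(x)$ for $\lambda\ge1$ with $q=2B-1$, derived from the doubling inequality $\phi(2\lambda)\le 2B\,\phi(\lambda)$ by a Gronwall-type argument on the right derivative of the convex function $\phi(\lambda)=g(\lambda x)$; this is legitimate since a finite convex function on $\rset$ is locally Lipschitz, so $\psi(\lambda)=\phi(\lambda)/\lambda^q$ is locally absolutely continuous and a nonpositive right derivative does force it to be non-increasing (and $B\ge1$ guarantees $q\ge1$). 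The standard convex-combination decomposition of $x+y$ then yields subadditivity of $g^{1/q}$ exactly, with no leftover constant to absorb. Your route is tidier and even gives a slightly smaller exponent ($2B-1$ versus the paper's $\max\{3B,\log_2(2B)\}$); the paper's discrete argument avoids any appeal to differentiation but pays for that in the final choice of $q$.
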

	
	\begin{proof}
		Consider points $x, y \in \rset^d$ such that $g(y) = \xi g(x)$, $\xi \le 1$. Due to convexity of $g(\cdot)$ one can obtain that for any $n \ge 1$ it holds
		\[
		g(x + y) \le \frac{n - 1}n g(x) + \frac1n g(x + n y) \le g(x) + \frac{B}n \bigl(g(x) + g(n y)\bigr).
		\]
		Consider $n = 2^k$; it follows from inequality~\ref{eq:vector_strong_growth_condition} that $g(2^k y) \le (2 B)^k g(y)$ and therefore
		\[
		g(x + y) \le g(x) + 2^{-k} B g(x) + B^{k + 1} g(y) = g(x) \bigl(1 + B (2^{-k} + \xi B^k)\bigr).
		\]
		Take $k \eqset \lfloor -\frac{\ln \xi}{\ln {2 B}}\rfloor$; then 
		\[
		2^{-k} + \xi B^k \le 2^{\frac{\ln \xi}{\ln 2 + \ln B} + 1} + \xi B^{-\frac{\ln \xi}{\ln 2 + \ln B}} = 3 \xi^{1 / q_0},
		\]
		where $q_0 \eqset \frac{\ln {2 B}}{\ln 2} \ge 1$. Thus $g(x + y) \le g(x) \bigl(1 + 3 B \xi^{1 / q_0}\bigr)$. Since $\xi \le 1$ one can obtain that for $q = \max\{3 B, q_0\}$ it holds
		\begin{multline}
			g^{1 / q}(x + y) \le g^{1 / q}(x) \bigl(1 + 3 B \xi^{1 / q_0}\bigr)^{1 / q} \le g^{1 / q}(x) \bigl(1 + \frac{3 B}{q} \xi^{1 / q_0}\bigr)\\
			{} \le g^{1 / q}(x) (1 + \xi^{1 / q}) = g^{1 / q}(x) + g^{1 / q}(y)
		\end{multline}
		for all $x, y \in \rset^d$.
	\end{proof}
	
	\begin{corollary}\label{cor:vector_measure_triangle}
		For all $\mu, \nu, \lambda \in \Prsp{\rset^d}$
		\begin{equation}
			J^{1 / q}(\mu, \nu) \le J^{1 / q}(\mu, \lambda) + J^{1 / q}(\lambda, \nu).
		\end{equation}
	\end{corollary}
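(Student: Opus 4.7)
The plan is to mimic the gluing-plus-pointwise-inequality argument from Lemma~\ref{lem:general_weak_triangle}, but combine the pointwise triangle inequality for $g^{1/q}$ (Theorem~\ref{thm:vector_triangle}) with Minkowski's inequality in $L^q$ instead of just linearity of integration.

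First, I would reduce to the nontrivial case: if $J(\mu, \lambda) = \infty$ or $J(\lambda, \nu) = \infty$, the right-hand side is $+\infty$ and there is nothing to prove. So assume both are finite, pick optimal transport plans $\gamma_1 \in \Pi(\mu, \lambda)$ and $\gamma_2 \in \Pi(\lambda, \nu)$, and invoke the disintegration/gluing theorem exactly as in the proof of Lemma~\ref{lem:general_weak_triangle} to obtain a measure $\sigma \in \Prsp{\rset^d \times \rset^d \times \rset^d}$ with $\pi^{x,y}_\# \sigma = \gamma_1$ and $\pi^{y,z}_\# \sigma = \gamma_2$. Then $\pi^{x,z}_\# \sigma$ is an admissible plan in $\Pi(\mu, \nu)$.

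Next I would apply Theorem~\ref{thm:vector_triangle} pointwise under the integral: since $c(x,z) = g\bigl((x-y) + (y-z)\bigr)$, we have
\[c^{1/q}(x, z) \le c^{1/q}(x, y) + c^{1/q}(y, z)\]
$\sigma$-a.e. Taking $L^q(\sigma)$-norms of both sides and applying Minkowski's inequality yields
\[\Bigl(\int c(x,z)\diff\sigma\Bigr)^{1/q} \le \Bigl(\int c(x,y)\diff\sigma\Bigr)^{1/q} + \Bigl(\int c(y,z)\diff\sigma\Bigr)^{1/q}.\]
The middle and right integrals coincide with $K(\gamma_1) = J(\mu, \lambda)$ and $K(\gamma_2) = J(\lambda, \nu)$ respectively, by the marginal conditions on $\sigma$. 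The left-hand side dominates $J^{1/q}(\mu, \nu)$ since $\pi^{x,z}_\# \sigma \in \Pi(\mu, \nu)$, giving the desired inequality.

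There is no serious obstacle: everything needed is already set up. The only small point worth flagging is that one must use Minkowski rather than simply integrating the pointwise inequality — the latter would only yield a bound on $\int c(x,z)\diff\sigma$ in terms of the mixed integral of $(c^{1/q}(x,y)+c^{1/q}(y,z))^q$, which is useless. Using $L^q$-norms is what converts the pointwise triangle inequality for $g^{1/q}$ into the triangle inequality for $J^{1/q}$.
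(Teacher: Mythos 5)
Your proposal is correct and follows essentially the same route as the paper: glue the optimal plans $\gamma_1 \in \Pi(\mu,\lambda)$ and $\gamma_2 \in \Pi(\lambda,\nu)$ into $\sigma$, apply the pointwise inequality $g^{1/q}(x-z) \le g^{1/q}(x-y) + g^{1/q}(y-z)$ from Theorem~\ref{thm:vector_triangle}, and pass to the integral via Minkowski's inequality in $L^q(\sigma)$. The reduction to the case of finite costs and the remark about why Minkowski (rather than direct integration) is needed are both sensible, if implicit in the paper's version.
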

	
	\begin{proof}
		Let us take measures $\mu, \nu, \lambda \in \Prsp{\rset^d}$ and optimal transport plans $\gamma_1 \in \Pi(\mu, \lambda)$, $\gamma_2\in \Pi(\lambda, \nu)$. Similarly to the proof of Lemma~\ref{lem:general_weak_triangle} consider a measure $\sigma\in \Pi(\mu, \lambda, \nu)$ such that $\pi^{x, y}_\# \sigma = \gamma_1$, $\pi^{y, z}_\# \sigma = \gamma_2$. Applying Theorem~\ref{thm:vector_triangle} and the Minkowski inequality one can obtain that
		\begin{multline}
			J^{1/q}(\mu, \nu) \le K^{1 / q}(\pi^{x, z}_\# \sigma)\\ 
			{} = \left(\int \bigl(g^{1/q}(x - z)\bigr)^q \diff \sigma\right)^{1 / q} \le \left(\int \bigl(g^{1 / q}(x - y) + g^{1 / q}(y - z)\bigr)^q \diff \sigma\right)^{1 / q}\\
			{} \le \left(\int g(x - y) \diff \sigma\right)^{1 / q} + \left(\int g(y - z) \diff \sigma\right)^{1 / q} = K^{1 / q}(\gamma_1) + K^{1 / q}(\gamma_2)\\
			{} = J^{1 / q}(\mu, \lambda) + J^{1 / q}(\lambda, \nu).\qedhere
		\end{multline}
	\end{proof}
	
	\begin{corollary}\label{cor:metric}
		The function $\rho(\mu, \nu) \eqset \max\{J^{1 / q}(\mu, \nu), J^{1 / q}(\nu, \mu)\} \in [0, \infty]$ is a metric on \Prsp{\rset^d} \textup{(}which may make the value $+\infty$\textup{)}.
	\end{corollary}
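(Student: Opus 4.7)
The plan is to verify the four axioms of a metric in turn, with only the triangle inequality requiring any work beyond a one-line observation. Nonnegativity and symmetry are immediate from the definition of $\rho$ as a maximum of two nonnegative quantities and from the evident symmetry $\max\{a,b\} = \max\{b,a\}$. The identity of indiscernibles reduces to the already-established fact (corollary to Theorem~\ref{thm:optimal_plan}) that $J(\mu,\nu) = 0$ iff $\mu = \nu$: if $\rho(\mu,\nu) = 0$ then both $J(\mu,\nu)$ and $J(\nu,\mu)$ vanish, so $\mu = \nu$; conversely $\mu = \nu$ gives $J(\mu,\nu) = J(\nu,\mu) = 0$ by taking the diagonal plan.

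For the triangle inequality, I would simply chain Corollary~\ref{cor:vector_measure_triangle} twice and take the max. Given $\mu, \nu, \lambda \in \Prsp{\rset^d}$, that corollary yields
\begin{equation}
J^{1/q}(\mu,\nu) \le J^{1/q}(\mu,\lambda) + J^{1/q}(\lambda,\nu) \le \rho(\mu,\lambda) + \rho(\lambda,\nu),
\end{equation}
and applying it with the arguments swapped gives
\begin{equation}
J^{1/q}(\nu,\mu) \le J^{1/q}(\nu,\lambda) + J^{1/q}(\lambda,\mu) \le \rho(\mu,\lambda) + \rho(\lambda,\nu).
\end{equation}
Taking the maximum of the two left-hand sides yields $\rho(\mu,\nu) \le \rho(\mu,\lambda) + \rho(\lambda,\nu)$.

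There is no real obstacle here; the only point requiring a small comment is that the inequalities above are meaningful even when some of the values are $+\infty$, since the convention $a \le +\infty$ is automatic and $\rho$ is explicitly allowed to take the value $+\infty$ (so the metric splits $\Prsp{\rset^d}$ into the same equivalence classes $E(\mu)$ introduced in Section~\ref{sec:MK_space}). I would therefore write the proof as essentially three lines invoking Corollary~\ref{cor:vector_measure_triangle} and the corollary of Theorem~\ref{thm:optimal_plan}, with a single sentence observing that symmetry of $\rho$ restores the lost symmetry of $J^{1/q}$ and hence promotes the ``quasi-metric'' $J^{1/q}$ to an honest (extended-valued) metric.
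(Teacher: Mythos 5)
Your proof is correct and is exactly the argument the paper intends: the corollary is stated without proof precisely because it follows immediately from Corollary~\ref{cor:vector_measure_triangle} (applied in both argument orders and combined via the maximum) together with the earlier fact that $J(\mu,\nu)=0$ iff $\mu=\nu$. Nothing is missing.
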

	
	As we have seen, under assumption~\eqref{eq:vector_strong_growth_condition} $\bigl(\Prsp{X}, J\bigr)$ is similar to a $q$-Wasserstein space for some degree $q$.
	
	Now consider Assumption~\ref{asm:general_growth_condition} in the Euclidean case. Obviously, one can rewrite it as
	\begin{equation}\label{eq:vector_general_growth_condition}
		g(\pm x \pm y) \le A + B \bigl(g(x) + g(y)\bigr).
	\end{equation}
	
	\begin{theorem}\label{thm:vector_weak_triangle}
		Under Assumption~\ref{asm:general_growth_condition} for any $\epsilon > 0$ there exist $q_\epsilon \ge 1$, $C_\epsilon > 0$ such that for all $x, y \in \rset^d$ it holds
		\begin{align}
			\bigl(g(\pm x \pm y) + \epsilon\bigr)^{1 / q_\epsilon} &\le \bigl(g(x) + \epsilon\bigr)^{1 / q_\epsilon} + \bigl(g(y) + \epsilon\bigr)^{1 / q_\epsilon},\\
			g(x \pm y) &\le \epsilon + (1 + \epsilon) g(x) + C_\epsilon g(y).
		\end{align}
	\end{theorem}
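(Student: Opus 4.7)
Both estimates will be obtained by running the argument of Theorem~\ref{thm:vector_triangle} for the shifted function $h(x) := g(x) + \epsilon$ in place of $g$. First I would verify that $h$ is convex (addition of a constant preserves convexity), that $h \ge \epsilon > 0$, and that it satisfies a \emph{strong} growth condition of the form $h(\pm x \pm y) \le B_\epsilon (h(x) + h(y))$. Indeed, by Assumption~\ref{asm:general_growth_condition} in its Euclidean form \eqref{eq:vector_general_growth_condition},
\[
h(\pm x \pm y) \le A + \epsilon + B\bigl(g(x) + g(y)\bigr) \le B_\epsilon \bigl(h(x) + h(y)\bigr),
\]
with $B_\epsilon := \max\{B,\; A/(2\epsilon) + 1/2\}$; the additive constant is absorbed by exploiting $h(x) + h(y) \ge 2\epsilon$.

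With these three properties in hand I would repeat the argument of Theorem~\ref{thm:vector_triangle} with $(g, B)$ replaced by $(h, B_\epsilon)$: set $\xi := h(y)/h(x) \le 1$, use convexity of $h$ to write $h(x+y) \le \frac{n-1}{n} h(x) + \frac{1}{n} h(x + ny)$, apply the strong bound $h(x + ny) \le B_\epsilon (h(x) + h(ny))$ together with the iterate $h(2^k y) \le (2 B_\epsilon)^k h(y)$, optimise over $n = 2^k$, and conclude via Bernoulli's inequality that there is $q_\epsilon \ge 1$ for which $h^{1/q_\epsilon}(x + y) \le h^{1/q_\epsilon}(x) + h^{1/q_\epsilon}(y)$. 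The other three sign combinations are treated analogously using the corresponding convex decompositions (e.g.\ $x - y = \frac{n-1}{n} x + \frac{1}{n}(x - n y)$) and the $\pm$ form of the strong bound for $h$. This is precisely the first displayed inequality.

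For the second inequality I would raise the first one to the $q_\epsilon$-th power and apply the elementary Young-type estimate
\[
(a + b)^{q} \le (1 + \delta)\, a^{q} + C(\delta, q)\, b^{q}, \qquad \delta > 0,
\]
which follows from convexity of $s \mapsto s^{q}$ via the splitting $a + b = (1-t) \frac{a}{1-t} + t \frac{b}{t}$ with $t = 1 - (1 + \delta)^{-1/(q-1)}$. Taking $\delta = \epsilon$, $a = (g(x) + \epsilon')^{1/q_{\epsilon'}}$, $b = (g(y) + \epsilon')^{1/q_{\epsilon'}}$ and $q = q_{\epsilon'}$ gives $g(x \pm y) + \epsilon' \le (1+\epsilon)(g(x) + \epsilon') + C(g(y) + \epsilon')$, so that $g(x \pm y) \le (1+\epsilon) g(x) + C g(y) + (\epsilon + C) \epsilon'$; choosing the inner shift $\epsilon'$ small enough that $(\epsilon + C)\epsilon' \le \epsilon$ and setting $C_\epsilon := C$ closes the argument.

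The main obstacle is the bookkeeping of signs and constants: the convex decomposition in step two must be executed for each of the four sign combinations, which in turn requires the auxiliary one-sided estimates $h(-u) \le B_\epsilon h(u)$ (obtained by setting one argument to zero in the $\pm$ strong bound); and in step three the Young constant $C$ depends on $q_{\epsilon'}$, which depends on $\epsilon'$, so one has to verify that $\epsilon'$ can indeed be made small enough relative to $C$. Both dependencies are explicit from the proof of Theorem~\ref{thm:vector_triangle}, so the required smallness of $\epsilon'$ is computable and does not create a cyclic condition.
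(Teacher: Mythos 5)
Your proof of the first inequality is essentially the paper's: both pass to the shifted function $h := g + \epsilon$, absorb the additive constant $A$ of Assumption~\ref{asm:general_growth_condition} into the shift to obtain a strong bound $h(\pm x \pm y) \le B_\epsilon \bigl(h(x) + h(y)\bigr)$, and then rerun the argument of Theorem~\ref{thm:vector_triangle} for $h$. Your derivation of $B_\epsilon$ is in fact a little cleaner than the paper's, which obtains the analogous constant $D_r = A/a_{r/2} + B$ via $a_r := \inf_{\|x\| \ge r} g(x) > 0$; the four sign combinations are handled equally informally in both texts, so I will not press that point.

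The second inequality is where the proposal has a genuine gap, and it is exactly the issue you flagged and then waved away. After the Young step you arrive at $g(x \pm y) \le (1 + \epsilon) g(x) + C\, g(y) + (\epsilon + C)\epsilon'$ with $C = C(\epsilon, q_{\epsilon'})$, and you need $(\epsilon + C)\epsilon' \le \epsilon$. No admissible $\epsilon'$ exists when $A > 0$. Indeed, the exponent produced by your first step grows like $q_{\epsilon'} \ge 3 B_{\epsilon'} \ge 3A/(2\epsilon')$, while the best possible constant in $(a + b)^q \le (1 + \epsilon) a^q + C b^q$ already satisfies $C \ge \bigl((q - 1)/\epsilon\bigr)^{q - 1}$ (maximize $(q t - \epsilon)/t^q$ over $t > 0$, using $(1 + t)^q \ge 1 + q t$). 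Hence $C(\epsilon, q_{\epsilon'})\, \epsilon'$ blows up as $\epsilon' \to 0$ and is bounded away from zero for moderate $\epsilon'$ as well: the dependence is cyclic and fatal, not mere bookkeeping. The paper sidesteps this entirely by proving the second inequality with no shift at all: from convexity, $g(x + y) \le (1 - 2^{-k}) g(x) + 2^{-k} g(x + 2^k y) \le (1 + 2^{-k} B) g(x) + 2^{-k} A + 2^{-k} B\, g(2^k y)$; one first picks $k$ so large that $2^{-k} B < \epsilon$ and $2^{-k} A < \epsilon/2$, then picks $r$ so that $2^{-k} B\, g(2^k y) < \epsilon/2$ whenever $\|y\| \le r$, and for $\|y\| > r$ a doubling bound of the form $g(2^k y) \le (2 D_r)^k g(y)$ dumps all the large constants into the coefficient $C_\epsilon$ of $g(y)$, where they are harmless. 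You should replace your third step by an argument of this type.
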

	
	\begin{proof}
		Fix $\epsilon > 0$. Consider $r > 0$ such that $g(x) \le \epsilon$, $\|x\| \le r$. Since $g(x) = 0$ iff $x = 0$ define $a_r \eqset \inf_{\|x\| \ge r} g(x) > 0$. If $\|x + y\| > r$ then $\|x\| > r / 2$ or $\|y\| > r / 2$ therefore
		\begin{multline*}
			g(x + y) \le A + B \bigl(g(x) + g(y)\bigr) \le \frac{A}{a_{r / 2}} \bigl(g(x) + g(y)\bigr) + B \bigl(g(x) + g(y)\bigr) \\
			= \left(\frac{A}{a_{r / 2}} + B\right) \bigl(g(x) + g(y)\bigr).
		\end{multline*}
		Consequently, there exists $D_r = \frac{A}{a_{r / 2}} + B \ge 1$ such that
		\[g(\pm x \pm y) \le \max\{\epsilon, D_r \bigl(g(x) + g(y)\bigr)\}\]
		for all $x, y \in \rset^d$. In particular,
		\[g(\pm x \pm y) + \epsilon \le D_r \bigl(g(x) + \epsilon + g(y) + \epsilon\bigr)\]
		and one can prove the first inequality in the same way as in Theorem~\ref{thm:vector_triangle}.
		
		In order to prove the second inequality, let us choose $k \in \nset$, $r > 0$ such that
		\begin{align}
			& 2^{-k} B < \epsilon,\\
			& 2^{-k} A < \frac{\epsilon}2,\\
			& 2^{-k} B g(x) < \frac{\epsilon}2 \text{ as } \|x\| \le 2^k r.
		\end{align}
		Then similarly to the proof of Theorem~\ref{thm:vector_triangle} one can show that
		\begin{multline}
			g(x + y) \le (1 + 2^{-k} B) g(x) + 2^{-k} B g(2^k y) + 2^{-k} A\\
			{} \le (1 + \epsilon) g(x) + \frac{\epsilon}2 + \max\{\frac{\epsilon}2, D_r^k B g(y)\} \le \epsilon + (1 + \epsilon) g(x) + C_\epsilon g(y),
		\end{multline}
		where $C_\epsilon \eqset D_r^k B$.
	\end{proof}
	
	\begin{corollary}\label{cor:vector_measure_weak_triangle}
		For any $\epsilon > 0$ and measures $\mu, \nu, \lambda \in \Prsp{\rset^d}$ the following inequalities hold:
		\begin{align}
			\bigl(J(\mu, \nu) + \epsilon\bigr)^{1 / q_\epsilon} &\le \bigl(J(\mu, \lambda) + \epsilon\bigr)^{1 / q_\epsilon} + \bigl(J(\lambda, \nu) + \epsilon\bigr)^{1 / q_\epsilon},\\
			J(\mu, \nu) &\le \epsilon + (1 + \epsilon) J(\mu, \lambda) + C_\epsilon J(\lambda, \nu),\\
			J(\mu, \nu) &\le \epsilon + (1 + \epsilon) J(\lambda, \nu) + C_\epsilon J(\mu, \lambda).
		\end{align}
	\end{corollary}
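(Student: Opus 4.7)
The plan is to mirror the proof of Corollary~\ref{cor:vector_measure_triangle}, which in turn follows Lemma~\ref{lem:general_weak_triangle}: take optimal transport plans $\gamma_1 \in \Pi(\mu, \lambda)$ and $\gamma_2 \in \Pi(\lambda, \nu)$, invoke the disintegration theorem \cite[Lemma~5.3.2]{A-G-S} to obtain a measure $\sigma \in \Pi(\mu, \lambda, \nu)$ with $\pi^{x, y}_\# \sigma = \gamma_1$ and $\pi^{y, z}_\# \sigma = \gamma_2$, and observe that $\pi^{x, z}_\# \sigma \in \Pi(\mu, \nu)$, so $J(\mu, \nu) \le \int g(x - z) \diff \sigma$.

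For the second and third inequalities I would apply the pointwise bound from Theorem~\ref{thm:vector_weak_triangle} directly under the integral, writing $x - z = (x - y) + (y - z)$ to get
\[g(x - z) \le \epsilon + (1 + \epsilon) g(x - y) + C_\epsilon g(y - z),\]
and integrating with respect to $\sigma$ (which is a probability measure, so the constant $\epsilon$ passes through); the marginals give the required $J(\mu, \lambda)$ and $J(\lambda, \nu)$. The third inequality follows identically by writing $x - z = (y - z) + (x - y)$ and applying the same pointwise bound with the roles of the two summands swapped.

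For the first inequality I would use the pointwise estimate
\[\bigl(g(x - z) + \epsilon\bigr)^{1 / q_\epsilon} \le \bigl(g(x - y) + \epsilon\bigr)^{1 / q_\epsilon} + \bigl(g(y - z) + \epsilon\bigr)^{1 / q_\epsilon},\]
and combine it with Minkowski's inequality in $L^{q_\epsilon}(\sigma)$, exactly as in Corollary~\ref{cor:vector_measure_triangle}. Concretely, since $\sigma$ is a probability measure, $J(\mu, \nu) + \epsilon \le \int \bigl(g(x - z) + \epsilon\bigr) \diff \sigma$, and raising both sides to the $1/q_\epsilon$ power and applying Minkowski gives the desired bound with $K(\gamma_1) + \epsilon = J(\mu, \lambda) + \epsilon$ and $K(\gamma_2) + \epsilon = J(\lambda, \nu) + \epsilon$ appearing on the right.

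I do not expect any real obstacle: the entire argument is a routine transport-plan coupling plus integration, the nontrivial content having already been absorbed into Theorem~\ref{thm:vector_weak_triangle}. The only point requiring a modicum of care is the first inequality, where one must exploit that $\epsilon = \int \epsilon \diff \sigma$ in order to pass the additive constant inside the integral before invoking Minkowski.
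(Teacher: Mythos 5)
Your proposal is correct and is exactly the argument the paper intends: the paper itself only remarks that the proof is ``completely similar to the proofs of Lemma~\ref{lem:general_weak_triangle} and Corollary~\ref{cor:vector_measure_triangle},'' i.e.\ couple $\gamma_1$ and $\gamma_2$ via a measure $\sigma \in \Pi(\mu,\lambda,\nu)$, apply the pointwise bounds of Theorem~\ref{thm:vector_weak_triangle} under the integral, and use Minkowski in $L^{q_\epsilon}(\sigma)$ for the first inequality. Your observation that $\epsilon = \int \epsilon \diff\sigma$ is precisely the small point of care needed, and the rest goes through as you describe.
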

	
	The proof of Corollary~\ref{cor:vector_measure_weak_triangle} is completely similar to the proofs of Lemma~\ref{lem:general_weak_triangle} and Corollary~\ref{cor:vector_measure_triangle}.

	\section{\texorpdfstring{Fr{\'e}chet}{Frechet} barycenters}\label{sec:barycenters}
	
	As we have obtained in sec.~\ref{sec:topology}, the space of probability measures endowed with the transportation topology has some good properties. In this section the \emph{barycenter} of measures will be defined, i.e.\ some kind of averaging w.r.t.\ the transportation structure of the space. It generalizes the construction from~\cite{AguehCarlier2011}, where the 2\nobreakdash{-}Wasserstein space is considered. Other related works on barycenters ate~\cite{LeGouicLoubes2015}, where $p$-Wasserstein barycenters are introduced and studied, and~\cite{KimPass2016} devoted to barycenters of measures on Riemannian manifolds. Regularized barycenters are also considered in the recent work~\cite{BigotCazellesPapadakis2016}.
	
	In the section, the Fr{\'e}chet barycenter will be shown to be ``upper semicontinuous'' in some sense and statistically consistent. Analogous results for measures over~$\rset$ and a convex cost function were proved in~\cite{SobolevskiKroshnin2015}.

	\subsection{Generalized averaging in \texorpdfstring{\Prsp{X}}{P(X)}}\label{subsec:bary}
	
	Let the space \Prsp{X} be endowed with the Borel $\sigma$\nobreakdash{-}algebra $\mathcal{B}(\tau_w)$ induced by the topology of weak convergence $\tau_w$. This $\sigma$-algebra is weaker than $\mathcal{B}(\tau_J)$, induced by the transportation topology. However, as we will see later, they are equivalent for defining an averaging in \Prsp{X}.
	
	\begin{definition}\label{def:empirical_barycenter}
		Take a functional $G \colon \Prsp{X} \to \rset \cup \{+\infty\}$ (regularizer) and consider a finite set $\mu_1, \mu_2, \dots, \mu_n$ of measures in~$\Prsp{X}$ and positive \emph{weights} $\lambda_1 > 0, \lambda_2 > 0, \dots, \lambda_n > 0$.
		A \emph{$G$-regularized Fr{\'e}chet barycenter} $\bary_G(\mu_i, \lambda_i)_{1 \le i \le n} \in \Prsp{X}$ (or just the Fr{\'e}chet barycenter if $G \equiv const$) with respect to the transportation functional $J(\cdot, \cdot)$ is a measure that minimizes
		\begin{equation}
			\sum_{1 \le i \le n} \lambda_i J(\mu_i, \nu) + G(\nu)
		\end{equation}
		over $\nu \in \Prsp{X}$. By $\Bary_G(\mu_i, \lambda_i)_{1 \le i \le n} \in \Prsp{X}$ we denote the set of all Fr{\'e}chet barycenters of $(\mu_i, \lambda_i)_{1 \le i \le n}$.
	\end{definition}
		
	\begin{definition}\label{def:population_barycenter}
		Take a functional $G \colon \Prsp{X} \to \rset \cup \{+\infty\}$ and a distribution $P \in \mathcal{P}\bigl(\Prsp{X}\bigr)$. Consider the problem of minimizing 
		\[
		\E_{\bm\mu \sim P} J(\bm\mu, \nu) + G(\nu) \to \min_{\nu \in \Prsp{X}}
		\]
		and denote any its solution by $\bary_G(P)$. We call the measure $\bary_G(P)$ a \emph{$G$-regularized Fr{\'e}chet barycenter} of the distribution~$P$. Respectively, $\Bary_G(P)$ is the set of all $G$-regularized Fr{\'e}chet barycenters of~$P$.
	\end{definition}
	
	Obviously, Definition~\ref{def:empirical_barycenter} is a particular case of Definition~\ref{def:population_barycenter} hence one can consider distribution $P_n \eqset \sum_{i = 1}^n \lambda_i \delta_{\mu_i}$ such that $\Bary_G(P_n) = \Bary_G(\mu_i, \lambda_i)_{1 \le i \le n}$. Under Assumptions~\ref{asm:general_growth_condition} and~\ref{asm:loc_compact} a $G$-regularized Fr{\'e}chet barycenter with lower semicontinuous and bounded below regularizer~$G$ always exists. 
	
	\begin{theorem}\label{thm:bary_existence}
		Let $G \colon \Prsp{X} \to \rset \cup \{+\infty\}$ be bounded below and lower semicontinuous with respect to the weak convergence, and distribution $P \in \mathcal{P}\bigl(\Prsp{X}\bigr)$ be such that 
		\[
		\inf_{\nu \in \Prsp{X}} \Bigl(\int J(\mu, \nu) \diff P(\mu) + G(\nu)\Bigr) < \infty;
		\] 
		then there exists a $G$-regularized Fr{\'e}chet barycenter of $P$. Moreover, any minimizing sequence $\{\nu_n\}_{n \in \nset}$, i.e.\ such that 
		\[
		\int J(\mu, \nu_n) \diff P(\mu) + G(\nu_n) \to \inf_{\nu \in \Prsp{X}} \Bigl(\int J(\mu, \nu) \diff P(\mu) + G(\nu)\Bigr),
		\] 
		is precompact in the topology $\tau_J$ and every its partial limit is a $G$-regularized barycenter of the distribution. In particular, $\Bary_G(P)$ is compact.
	\end{theorem}
	
	\begin{proof}
		Fix $\epsilon > 0$ and a ball $B = \overline{B}_r^c(x_0)$ such that $\mu(X \setminus B) < \epsilon / 2$ for all measures from some set $\mathcal{U} \subset \Prsp{X}$, $P(\mathcal{U}) > 1 / 2$. Consider any $R > r$ and measure $\lambda_R$ such that $\lambda_R\bigl(X \setminus B_R^c(x_0)\bigr) \ge \epsilon$. One can obtain that
		\[\int J(\mu, \lambda_R) \diff P(\mu) \ge \frac12 \frac{\epsilon}2 \inf\bigl\{c(x, y): x \in B,\, y \notin B_R^c(x_0)\bigr\} \to \infty \text{ as } R \to \infty.\]
		Let $m$ be a lower bound for $G(\cdot)$, then
		\begin{multline}
			\limsup \int J(\mu, \nu_n) \diff P(\mu) \le \limsup \Bigl(\int J(\mu, \nu_n) \diff P(\mu) + G(\nu_n) - m\Bigr)\\
			{} = \inf_{\nu \in \Prsp{X}} \Bigl(\int J(\mu, \nu_n) \diff P(\mu) + G(\nu_n)\Bigr) - m < +\infty,
		\end{multline}
		consequently, $\{\nu_n\}_{n \in \nset}$ is a tight sequence (similarly to Lemma~\ref{lem:tight}) and there exists weakly convergent subsequence $\nu_{n_k} \rightharpoonup \nu^*$.
		
		By Fatou's lemma and lower semicontinuity of $J(\cdot, \cdot)$ and $G(\cdot)$
		\begin{multline}
			\int J(\mu, \nu^*) \diff P(\mu) + G(\nu^*) \le \int \liminf J(\mu, \nu_{n_k}) \diff P(\mu) + \liminf G(\nu_{n_k})\\
			{} \le \liminf \Bigl(\int J(\mu, \nu_{n_k}) \diff P(\mu) + G(\nu_{n_k})\Bigr) = \inf_{\nu \in \Prsp{X}} \Bigl(\int J(\mu, \nu) \diff P(\mu) + G(\nu)\Bigr).
		\end{multline}
		Thus $\nu^* \in \Bary_G(P)$. Moreover, $J(\mu, \nu^*) = \liminf J(\mu, \nu_{n_k})$ for $P$-a.e.\ $\mu$, so
		by Theorem~\ref{thm:criterion} there is a subsequence $\nu_{n_k} \xrightarrow{J} \nu^*$ (without relabelling).
	\end{proof}
	
	In particular, notice that $\bary_G(\mu_i, \lambda_i)_{1 \le i \le n} \in \Prsp{X}$ exists iff all $\mu_i$ belong to the same equivalence class and $G(\cdot) \not\equiv +\infty$ on this class. For $\bary_G(P)$ to exist it is necessary but not sufficient that $\supp P \subset E(\mu)$ for some $\mu$, i.e.\ $P\bigl(E(\mu)\bigr) = 1$. Notice that since $E(\mu)$ and every ball $B_r^J(\nu)$ are measurable w.r.t.\ $\mathcal{B}(\tau_w)$, the restriction of $\mathcal{B}(\tau_w)$ to $E(\mu)$ coincides with $\mathcal{B}(\tau_J)$, as the restriction of $\tau_J$ on $E(\mu)$ has a countable basis of balls. Therefore, it is enough to consider the space \Prsp{X} endowed with $\mathcal{B}(\tau_w)$ instead of a stronger $\sigma$-algebra $\mathcal{B}(\tau_J)$.
	
	As example of bounded below and lower semicontinuous regularizer one can consider a characteristic function of some weakly closed subset $\mathcal{G} \subset \Prsp{X}$, or entropy-type functionals: $G(\nu) = \int g\left(\frac{\diff \nu}{\diff \nu_0}\right) \diff \nu_0$, where $\nu \ll \nu_0$ and $g(\cdot)$ is a convex nonnegative function. See~\cite{BigotCazellesPapadakis2016} for more details on regularized barycenters in the $2$-Wasserstein space.
	
	Consider the case where $G(\cdot)$ is convex. Due to convexity of $J(\cdot, \cdot)$ by Lemma~\ref{lem:convex}, $\Bary_G(P)$ also is a convex set. Moreover, if $X = \rset^d$ and $c(x, y) = g(x - y)$, where $g(\cdot)$ is \emph{strictly} convex, then $J(\mu, \cdot)$ is also strictly convex, whenever $\mu$ is absolutely continuous w.r.t.\ the Lebesgue measure $\mathcal{L}$. It follows from the fact that in this case for any $\nu \sim \mu$ there exists a unique optimal transport plan from $\mu$ to $\nu$ of the form $\gamma = (id \times T)_\# \mu$, where $T$ is an optimal \emph{transport map}~\cite[see][Section~1.3]{Santambrogio2015}. Therefore, there exists a \emph{unique} barycenter of $P$, whenever $P\bigl(\{\mu : \mu \ll \mathcal{L}\}\bigr) > 0$. However, even without any assumption about measures one can take a strictly convex regularizer and ensure the uniqueness of the barycenter.

	\subsection{Consistency of barycenters}\label{subsec:consistency}
	
	Let us consider distributions on $E_0 = E(\delta_{x_0})$ where $x_0$ is some fixed point in $X$. One can define the Monge--Kantorovich distance between them with $J(\cdot, \cdot)$ as a cost function:
	\begin{equation}
		\mathcal{J}(P, P') \eqset \inf_{F \in \Pi(P, P')} \int J(\mu, \nu) \diff F(\mu, \nu),\; P, P' \in \Prsp{E_0}.
	\end{equation}
	As $E_0$ endowed with the topology $\tau_J$ is a Polish space, and $J(\cdot, \cdot)$ as a cost function satisfies Assumption~\ref{asm:general_growth_condition} and other conditions, all the results from section~\ref{sec:topology} hold except for those which need Assumption~\ref{asm:loc_compact}.
	
	Now let us show that convergence of distributions with respect to $\mathcal{J}(\cdot, \cdot)$ implies the transportation convergence of its barycenters. This result is similar to~\cite[Theorem~2]{LeGouicLoubes2015} in case of $p$-Wasserstein spaces. Also, we will obtain the law of large numbers for empirical barycenters proved in~\cite[Theorem~6.1]{BigotKlein2015} for the 2\nobreakdash{-}Wasserstein space and measures with compact support.
	
	First, one can show that for measures from $E_0$ the following analogue of Prokhorov's theorem holds.
	
	\begin{lemma}\label{lem:compactness}
		A family of measures $\mathcal{H} \subset E_0$ is precompact in the transportation topology iff for any $\epsilon > 0$ there exists a compact set $D_\epsilon \subset X$ such that $\int_{X \setminus D_\epsilon} c(x, x_0) \diff \nu \le \epsilon$ for all $\nu \in \mathcal{H}$.
	\end{lemma}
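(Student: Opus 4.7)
The statement is a Prokhorov-type criterion for $(\Prsp{X}, \tau_J)$ restricted to the class $E(\delta_{x_0})$, so my plan is to reduce everything to Theorem~\ref{thm:criterion} applied with the test measure $\mu = \delta_{x_0}$. Observe that $\Pi(\nu, \delta_{x_0})$ contains only $\nu \otimes \delta_{x_0}$, so $J(\nu, \delta_{x_0}) = \int c(x, x_0) \diff\nu(x)$, and the hypothesis of the lemma is exactly a uniform integrability condition on the functional $\nu \mapsto J(\nu, \delta_{x_0})$ over $\mathcal{H}$.

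For the sufficient direction, I would first establish weak tightness. The weak triangle inequality (Assumption~\ref{asm:general_growth_condition}) yields $c(x_0, y) \le A + B\, c(y, x_0)$, so any $y \notin B_R$ satisfies $c(y, x_0) \ge (R - A)/B$; hence
\[\nu(X \setminus B_R)\cdot \frac{R - A}{B} \le \int_{X \setminus B_R} c(x, x_0) \diff\nu(x),\]
and the hypothesis together with compactness of $\overline{B_R}$ (Assumption~\ref{asm:loc_compact}) gives weak tightness of $\mathcal{H}$. From any sequence in $\mathcal{H}$ I extract a weakly convergent subsequence $\nu_{n_k} \rightharpoonup \nu^*$; Fatou combined with the hypothesis places $\nu^* \in E(\delta_{x_0})$. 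To upgrade to $\tau_J$-convergence via Theorem~\ref{thm:criterion} (arguments swapped, per the Remark), it suffices to show $\int c(x, x_0) \diff\nu_{n_k} \to \int c(x, x_0) \diff\nu^*$. The $\liminf$ direction is Fatou; for the matching $\limsup$ I would truncate with a continuous cutoff $\psi_R$ equal to $1$ on $B_R$ and vanishing off $B_{2R}$, so that $c(\cdot, x_0) \psi_R$ is bounded continuous and its integrals against $\nu_{n_k}$ converge to those against $\nu^*$, while $\int c(\cdot, x_0)(1 - \psi_R) \diff\nu_{n_k} \le \int_{X \setminus B_R} c(x, x_0) \diff\nu_{n_k}$ is uniformly below any prescribed $\epsilon$ by hypothesis.

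For the necessary direction I argue by contradiction. If the uniform integrability condition failed, there would exist $\epsilon_0 > 0$, $R_n \to \infty$ and $\nu_n \in \mathcal{H}$ with $\int_{X \setminus B_{R_n}} c(x, x_0) \diff\nu_n \ge \epsilon_0$. Precompactness produces a subsequence $\nu_{n_k} \xrightarrow{J} \nu^* \in E(\delta_{x_0})$, and Theorem~\ref{thm:criterion} (swapped form) then yields $\int c(x, x_0) \diff\nu_{n_k} \to \int c(x, x_0) \diff\nu^* < \infty$. Because $B_R$ is open, $c(\cdot, x_0) \chi_{B_R}$ is nonnegative lower semicontinuous, so the Portmanteau inequality gives $\liminf_k \int_{B_R} c(x, x_0) \diff\nu_{n_k} \ge \int_{B_R} c(x, x_0) \diff\nu^*$; subtracting from the convergence of the total integrals yields
\[\limsup_k \int_{X \setminus B_R} c(x, x_0) \diff\nu_{n_k} \le \int_{X \setminus B_R} c(x, x_0) \diff\nu^* \xrightarrow[R \to \infty]{} 0.\]
Fix $R_0$ so that the right-hand side is below $\epsilon_0 / 2$; for $k$ large enough one has $R_{n_k} > R_0$, so $\int_{X \setminus B_{R_0}} c(x, x_0) \diff\nu_{n_k} \ge \int_{X \setminus B_{R_{n_k}}} c(x, x_0) \diff\nu_{n_k} \ge \epsilon_0$, contradicting the $\limsup$ bound.

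The main technical point that I expect to be slightly delicate is the asymmetry of the cost: the balls $B_R = B_R^c(x_0)$ are defined via $c(x_0, \cdot)$, whereas every integrand appearing in the statement and throughout the argument is $c(\cdot, x_0)$. This mismatch is resolved once and for all by the one-line weak triangle estimate above, after which the proof is a direct bookkeeping exercise around Theorem~\ref{thm:criterion}; the heart of the argument is the truncation step in the sufficiency, which converts the moment bound into the tight uniform control needed for convergence of the integrals.
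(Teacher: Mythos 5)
Your proof is correct and follows essentially the same route as the paper: identify $J(\nu,\delta_{x_0})=\int c(x,x_0)\diff\nu$, extract a weakly convergent subsequence, show the uniform tail bound forces $J(\nu_{n_k},\delta_{x_0})\to J(\nu^*,\delta_{x_0})$ (respectively, that a failing tail bound creates a deficit of size $\epsilon_0$), and invoke Theorem~\ref{thm:criterion}. The only differences are cosmetic: you fill in the tightness step via the weak triangle inequality and use a continuous cutoff where the paper splits over $\overline{B}_{R_\epsilon}$, which the paper leaves implicit.
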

	
	\begin{proof}
		Let the condition of the lemma holds. Consider the image of $\mathcal{H}$ under the isomorphism $F(\nu) \eqset \bigl(1 + c(x, x_0)\bigr) \lfloor \nu$ introduced in the proof of Corollary~\ref{cor:polish}. The set $F(\mathcal{H})$ is tight and uniformly bounded in $\mathcal{M}_+(X)$; indeed, for any $\nu \in \mathcal{H}$ one has
		\begin{multline}
			\int \diff \bigl(F(\nu)\bigr) \eqset \int \bigl(1 + c(x, x_0)\bigr) \diff \nu = 1 + \int_{X \setminus D_1} c(x, x_0) \diff \nu + \int_{D_1} c(x, x_0) \diff \nu\\
			{} \le 2 + \max_{x \in D_1} c(x, x_0) < \infty.
		\end{multline}
		Therefore, $F(\mathcal{H})$ is weakly precompact by the variant of Prokhorov's theorem~\cite[Theorem~8.6.7]{Bogachev2007}. Since $F(E_0)$ is weakly closed in $\mathcal{M}_+(X)$, it follows that $\mathcal{H}$ is precompact in $E_0$ with transportation topology by Theorem~\ref{thm:dual}.
		
		On the other hand, if $\mathcal{H}$ is precompact then $F(\mathcal{H})$ is weakly precompact. Hence $F(\mathcal{H})$ is tight what implies the statement of the lemma.
	\end{proof}
	
	\begin{theorem}\label{thm:bary_consistency}
		Take a regularizer $G$ which is bounded below and lower semocontinuous with respect to $\tau_w$. Let a sequence $\{P_n\}_{n \in \nset} \subset \Prsp{E_0}$ be such that $P_n \xrightarrow{\mathcal{J}} P$ for some distribution $P$ and let there exist a $G$-regularized Fr{\'e}chet barycenter of~$P$. Then there exist barycenters $\nu_n \in \Bary_G(P_n)$ beginning from some $n_0$, the sequence $\{\nu_n\}_{n \ge n_0}$ is precompact and every its partial limit is a barycenter of~$P$. In particular, if $\nu^* \eqset \bary_G(P)$ is unique, then $\nu_n \xrightarrow{J} \nu^*$.
	\end{theorem}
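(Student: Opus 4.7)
My plan has four steps: (i) produce $\nu_n$ via the weak triangle inequality together with Theorem~\ref{thm:bary_existence}; (ii) extract a weakly convergent subsequence of $\{\nu_n\}$ via Lemma~\ref{lem:tight}; (iii) identify the weak limit $\tilde\nu$ as a constrained barycenter of $P$ via Skorokhod's theorem combined with lower semicontinuity of $J$ and Fatou's lemma; (iv) upgrade the weak convergence to transportation convergence through Theorem~\ref{thm:criterion}.

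\textbf{Existence and weak tightness.} Take an optimal $\mathcal{J}$-coupling $F_n \in \Pi(P_n, P)$ with $\int J(\mu, \mu') \diff F_n = \mathcal{J}(P_n, P) \to 0$. Integrating Lemma~\ref{lem:general_weak_triangle} against $F_n$ gives
\[
\int J(\mu, \nu^*) \diff P_n(\mu) \le A + B\bigl(\mathcal{J}(P_n, P) + \int J(\mu', \nu^*) \diff P(\mu')\bigr),
\]
uniformly bounded by hypothesis, so Theorem~\ref{thm:bary_existence} supplies $\nu_n \in \Bary_\mathcal{G}(P_n)$ for $n \ge n_0$ with $\int J(\mu, \nu_n) \diff P_n \le \int J(\mu, \nu^*) \diff P_n \le M$. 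A second weak-triangle estimate $J(\nu_n, \nu^*) \le A + B(J(\mu, \nu_n) + J(\mu, \nu^*))$, integrated against $P_n$, bounds $J(\nu_n, \nu^*)$ uniformly; Lemma~\ref{lem:tight} (centered at $\nu^*$) then yields weak tightness, and I extract $\nu_{n_k} \rightharpoonup \tilde\nu \in \mathcal{G}$.

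\textbf{Identification.} The couplings $F_n$ converge weakly to $(\mathrm{id} \times \mathrm{id})_\# P$: any weak subsequential limit has the correct marginals and satisfies $\int J \diff F^* \le \liminf \int J \diff F_n = 0$, forcing concentration on the diagonal. Skorokhod's theorem furnishes $(\bm\mu_k, \bm\mu_k') \sim F_{n_k}$ on a common probability space with $(\bm\mu_k, \bm\mu_k') \to (\bm\mu, \bm\mu)$ weakly a.s., $\bm\mu \sim P$. Lemma~\ref{lem:l.s.c.} combined with $\nu_{n_k} \rightharpoonup \tilde\nu$ gives $J(\bm\mu, \tilde\nu) \le \liminf_k J(\bm\mu_k, \nu_{n_k})$ a.s., and Fatou yields
\[
\int J(\mu, \tilde\nu) \diff P \le \liminf_k \int J(\mu, \nu_{n_k}) \diff P_{n_k}(\mu).
\]
For any competitor $\nu \in \mathcal{G}$ with $\int J(\mu, \nu) \diff P < \infty$, Theorem~\ref{thm:criterion} applied to $\delta_\nu$ in $\Prsp{E(\delta_{x_0})}$ gives $\int J(\mu, \nu) \diff P_n \to \int J(\mu, \nu) \diff P$; optimality of $\nu_{n_k}$ then forces $\limsup \int J(\mu, \nu_{n_k}) \diff P_{n_k} \le \int J(\mu, \nu) \diff P$. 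Hence $\tilde\nu \in \Bary_\mathcal{G}(P)$, and the two bounds collapse to equalities.

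\textbf{Upgrade and main obstacle.} Equality in Fatou forces $J(\bm\mu_k, \nu_{n_k}) \to J(\bm\mu, \tilde\nu)$ in $L^1$, hence a.s.\ along a subsequence. Applying the same trick to $J(\bm\mu_k, \nu^*)$ (whose expectation $\int J(\mu, \nu^*) \diff P_{n_k}$ converges to $\int J(\mu, \nu^*) \diff P$) gives $J(\bm\mu_k, \nu^*) \to J(\bm\mu, \nu^*)$ a.s., and Theorem~\ref{thm:criterion} in swapped form with witness $\nu^*$ delivers $\bm\mu_k(\omega) \xrightarrow{J} \bm\mu(\omega) =: \mu_\infty$ for suitable $\omega$, with $\mu_\infty \in E(\tilde\nu)$ (since $\nu^* \sim \tilde\nu$, both being barycenters of $P$). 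The conclusion $\nu_{n_k} \xrightarrow{J} \tilde\nu$ would then follow from Theorem~\ref{thm:criterion} (condition~3 with witness $\mu_\infty$) once one promotes $J(\bm\mu_k(\omega), \nu_{n_k}) \to J(\mu_\infty, \tilde\nu)$ (\emph{varying} witness) to $J(\mu_\infty, \nu_{n_k}) \to J(\mu_\infty, \tilde\nu)$ (\emph{fixed} witness). This is the main technical step: a naive weak-triangle bound only yields $\limsup J(\mu_\infty, \nu_{n_k}) \le A + B J(\mu_\infty, \tilde\nu)$, too loose when $A > 0$ or $B > 1$, so one must invoke the local refinement Lemma~\ref{lem:compact_growth_condition} (which avoids the additive constant $A$) in the manner of the proof of the continuity lemma preceding Corollary~\ref{cor:topology}, combined with the tightness of all sequences involved, to close the gap and obtain transportation convergence with a fixed witness.
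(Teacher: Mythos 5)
Your steps (i)--(iii) are sound, and your identification of the weak limit as a constrained barycenter (Skorokhod representation of the optimal couplings $F_{n_k}$ plus joint lower semicontinuity of $J$ and Fatou) is a legitimate alternative to the paper's route, which instead establishes $\tau_J$-convergence first and only then identifies the limit by continuity of $J$. The problem is step (iv), which you yourself flag as ``the main technical step'' and do not carry out --- and the method you propose for closing it does not work. If you glue an optimal plan in $\Pi(\mu_\infty,\bm\mu_k(\omega))$ to an optimal plan $\gamma_k\in\Pi(\bm\mu_k(\omega),\nu_{n_k})$ and apply Lemma~\ref{lem:compact_growth_condition} on a compact set and Assumption~\ref{asm:general_growth_condition} off it (exactly as in the continuity lemma before Corollary~\ref{cor:topology}), the off-compact remainder contains the term $B\int_{\text{far}} c(y,z)\diff\gamma_k$, the ``escaping $c$-mass'' of the optimal plans \emph{into} $\nu_{n_k}$. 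In the continuity lemma this term is controlled because \emph{both} sequences are assumed to converge in $\tau_J$; here $\tau_J$-convergence of $\nu_{n_k}$ is precisely the conclusion you are trying to reach. Weak tightness of $\{\nu_{n_k}\}$ bounds $\gamma_k(\text{far})$ but not $\int_{\text{far}} c\diff\gamma_k$ (Lemma~\ref{lem:compactness} is exactly the statement that these are different things on $E(\delta_{x_0})$), so the circle does not close.

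The missing idea is variational, not metric: one must use that $\nu_{n}$ \emph{minimizes} $\int J(\mu,\cdot)\diff P_n$. The paper argues by contradiction via Lemma~\ref{lem:compactness}: if some $c(\cdot,x_0)$-mass $\epsilon_0$ of $\nu_n$ escapes every ball, replace $\nu_n$ by the truncation $\tilde\nu_n:=f_R\lfloor\nu_n+\bigl(1-(f_R\lfloor\nu_n)(X)\bigr)\delta_{x_0}$ and show, by splitting the optimal plans $\gamma_{\mu'}^{\nu_n}$ over $X\times B_{R+1}$ and its complement and using Assumption~\ref{asm:general_growth_condition} with the carefully chosen radii $r,R$, that $J(\mu',\tilde\nu_n)\le J(\mu',\nu_n)-3\epsilon_0/8$ uniformly over $\mu'$ in small $J$-balls; summing over a countable cover $\{U_k\}$ of $E(\delta_{x_0})$ this yields $\liminf\int J(\mu,\nu_n)\diff P_n\ge\int J(\mu,\nu^*)\diff P+\epsilon_0/8$, contradicting $\int J(\mu,\nu_n)\diff P_n\le\int J(\mu,\nu^*)\diff P_n\to\int J(\mu,\nu^*)\diff P$. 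This comparison-with-a-truncation argument, which quantifies the gain from pushing escaping mass onto $\delta_{x_0}$ and is where the restriction to the class $E(\delta_{x_0})$ (or alternatively the stronger condition~\eqref{eq:strong_growth_condition}) is actually used, is absent from your proposal; without it, or condition~\eqref{eq:strong_growth_condition}, the fixed-witness convergence $J(\mu_\infty,\nu_{n_k})\to J(\mu_\infty,\tilde\nu)$ --- and hence precompactness of $\{\nu_n\}$ in $\tau_J$ --- remains unproven.
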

	
	\begin{remark}
		One can rewrite the statement of the theorem as follows: let the distribution $P$ have a $G$-regularized Fr{\'e}chet barycenter; then for any $\epsilon > 0$ there exists $\delta > 0$ such that 
		\[\Bary_G(P') \subset U_\epsilon\bigl(\Bary_G(P)\bigr) \text{ as } \mathcal{J}(P, P') < \delta,\]
		where $U_\delta\bigl(\Bary_G(P)\bigr) \eqset \bigcup_{\mu \in \Bary_G(P)} B^J_\epsilon(\mu)$ is an open neighbourhood of $\Bary_G(P)$. One can say that set-valued map $P \mapsto \Bary_G(P)$ is upper-semicontinuous with respect to a Hausdorf-like distance.
	\end{remark}
	
	\begin{proof}
		Obviously, for any $\mu_0 \in E_0$ it holds $\limsup J(\nu_n, \mu_0) < \infty$ therefore $\{\nu_n\}_{n \in \nset}$ is tight. Let $\nu_n \rightharpoonup \nu^*$ without relabelling. Assume that there is no subsequence convergent to $\nu^*$ in $\tau_J$. Then by Lemma~\ref{lem:compactness} one can assume without loss of generality that for some $\epsilon_0 > 0$ and any $R > 0$
		\[
		\liminf \int_{X \setminus B_R} c(x, x_0) \diff \nu_n > \epsilon_0 B.
		\]
		For any $R > 0$ fix continuous function $f_R \colon X \to [0, 1]$ such that $f_R(x) = 1$, $x \in B_R$ and $f_R(x) = 0$, $x \notin B_{R + 1}$. For given $R$ and measure $\nu$ let us define measure $\tilde\nu  = \tilde\nu_R \eqset f_R \lfloor \nu + \bigl(1 - (f_R \lfloor \nu)(X)\bigr) \delta_{x_0}$ similarly to one in Section~\ref{sec:topology}. Notice that $\tilde\nu_n \xrightarrow{J} \tilde\nu^*$ for any $R$ and $\tilde\nu^* \xrightarrow{J} \nu^*$ as $R \to \infty$.
		
		Consider an arbitrary measure $\mu \in E_0$. Due to Lemma~\ref{lem:compactness},
		\[
		\sup_{\mu' \in B^J_\delta(\mu)} \int_{X \setminus B_r} c(x, x_0) \diff \mu' \to 0, \text{ as } r \to \infty,\; \delta \to 0.
		\]
		Now one can choose $r, R, \delta > 0$ and $n_0 \in \nset$ such that for any $n \ge n_0$ and for any $\mu' \in B^J_\delta(\mu)$ the following inequalities hold:
		\begin{align}
			& J(\mu', \nu^*) < J(\mu', \tilde\nu_n) + \frac{\epsilon_0}{4},\\
			& \int_{X \setminus B_{R + 1}} c(y, x_0) \diff \nu_n(y) > \epsilon_0 B,\\
			& \int_{X \setminus B_r} c(x, x_0) \diff \mu'(x) < \frac{\epsilon_0}{8},\\
			& r \nu_n\bigl(X \setminus B_R\bigr) < \frac{\epsilon_0}{8},\\
			& \nu_n\bigl(X \setminus B_{R + 1}\bigr) < \epsilon_0 \frac{B}{8 A}.
		\end{align}
		
		Let us denote by $\gamma_{\mu'}^{\nu_n}$ an optimal transport plan from $\mu'$ to $\nu_n$. Then 
		\[
		J(\mu', \tilde\nu_n) \le \int_{X \times B_{R + 1}} c \diff \gamma_{\mu'}^{\nu_n} + \int_{X \times (X \setminus B_R)} c(x, x_0) \diff \gamma_{\mu'}^{\nu_n}.
		\]
		One can obtain that 
		\begin{multline}
			\int_{X \times (X \setminus B_R)} c(x, x_0) \diff \gamma_{\mu'}^{\nu_n}
			{} \le \int_{B_r \times (X \setminus B_R)} c(x, x_0) \diff \gamma_{\mu'}^{\nu_n} + \int_{(X \setminus B_r) \times X} c(x, x_0) \diff \gamma_{\mu'}^{\nu_n}\\
			{} \le r \nu_n(X \setminus B_R) + \int_{X \setminus B_r} c(x, x_0) \diff \mu' \le \frac{\epsilon_0}{4},
		\end{multline}
		hence
		\[
		J(\mu', \tilde\nu_n) \le \int_{X \times B_{R + 1}} c(x, y) \diff \gamma_{\mu'}^{\nu_n} + \frac{\epsilon_0}{4}.
		\]
		On the other hand,
		\begin{multline}
			J(\mu', \nu_n) = \int_{X \times B_{R + 1}} c(x, y) \diff \gamma_{\mu'}^{\nu_n} + \int_{X \times (X \setminus B_{R + 1})} c(x, y) \diff \gamma_{\mu'}^{\nu_n}\\
			{} \ge \int_{X \times B_{R + 1}} c(x, x_0) \diff \gamma_{\mu'}^{\nu_n} + \int_{X \times (X \setminus B_{R + 1})} \bigl(\frac1B c(y, x_0) - c(x, x_0) - \frac{A}{B}\bigr) \diff \gamma_{\mu'}^{\nu_n}\\
			{} = \int_{X \times B_{R + 1}} c(x, x_0) \diff \gamma_{\mu'}^{\nu_n} + \frac1B \int_{X \setminus B_{R + 1}} c(y, x_0) \diff \nu_n\\
			{} - \int_{X \times (X \setminus B_{R + 1})} c(x, x_0) \diff \gamma_{\mu'}^{\nu_n} - \frac{A}{B} \nu_n(X \setminus B_{R + 1})\\
			{} \ge \int_{X \times B_{R + 1}} c(x, x_0) \diff \gamma_{\mu'}^{\nu_n} + \epsilon_0 - \frac{\epsilon_0}{4} - \frac{\epsilon_0}{8}\\
			{} = \int_{X \times B_{R + 1}} c(x, x_0) \diff \gamma_{\mu'}^{\nu_n} + \frac{5 \epsilon_0}{8}.
		\end{multline}
		Therefore $J(\mu', \tilde\nu_n) \le J(\mu', \nu_n) - 3 \epsilon_0 / 8$ and
		\[
		J(\mu', \nu^*) < J(\mu', \tilde\nu_n) + \frac{\epsilon_0}{4} \le J(\mu', \nu_n) - \frac{\epsilon_0}{8}
		\]
		for all $\mu' \in B^J_\delta(\mu)$. Thus for any $k \in \nset$ there exists an open set $U_k \subset B^J_k(\nu^*)$ such that for all $\mu \in U_k$ 
		\[
		J(\mu, \nu_n) \ge J(\mu, \nu^*) + \frac{\epsilon_0}{8},\; n \ge k,
		\]
		and $\bigcup_{k = 1}^\infty U_k = E_0$. Now one can obtain that for any $k$
		\begin{align}
			\liminf \Bigl(\int J(\mu, \nu_n) \diff P_n + G(\nu_n)\Bigr) &\ge \liminf \Bigl(\int_{U_k} J(\mu, \nu_n) \diff P_n + G(\nu_n)\Bigr)\\
			{} &\ge \liminf \Bigl(\int_{U_k} J(\mu, \nu^*) \diff P_n + \frac{\epsilon_0}{8} P_n(U_k) + G(\nu_n)\Bigr) \\
			{} &\ge \int_{U_k} J(\mu, \nu^*) \diff P + \frac{\epsilon_0}{8} P(U_k) + G(\nu^*).
		\end{align}
		But since $\bigcup_{k = 1}^\infty U_k = E_0$
		\[
		\liminf \Bigl(\int J(\mu, \nu_n) \diff P_n + G(\nu_n)\Bigr) \ge \int J(\mu, \nu^*) \diff P + G(\nu^*) + \frac{\epsilon_0}{8},
		\]
		what contradicts to the fact that
		\[
		\int J(\mu, \nu_n) \diff P_n + G(\nu_n) \le \int J(\mu, \nu^*) \diff P_n + G(\nu^*) \to \int J(\mu, \nu^*) \diff P + G(\nu^*).
		\]
		Consequently, there exists a convergent subsequence of barycenters $\nu_{n_k} \xrightarrow{J} \nu^*$. 
		
		Let us show that $\nu^*$ is a $G$-regularized barycenter of~$P$. Indeed, consider any $\nu \in \Prsp{X}$. Continuity of the transportation distance implies 
		\begin{multline}
			\int J(\mu, \nu^*) \diff P + G(\nu^*) \le \lim \int J(\mu, \nu_{n_k}) \diff P_{n_k} + \liminf G(\nu_{n_k})\\
			{} \le \liminf \Bigl(\int J(\mu, \nu) \diff P_{n_k} + G(\nu)\Bigr) = \int J(\mu, \nu) \diff P + G(\nu),
		\end{multline}
		thus, $\nu^* \in \Bary_G(P)$.
	\end{proof}

	\begin{remark}
		Although the set-valued map $P \mapsto \Bary_G(P)$ is in some sense ``upper semicontinuous'', in general case there do not exist a \emph{continuous} function $P \mapsto \bary_G(P)$, even for $G(\cdot) \equiv const$. However, if $G(\cdot)$ is strictly convex then $P \mapsto \bary_G(P)$ is actually continuous.
	\end{remark}
	
	\begin{corollary}[upper semicontinuity of empirical barycenters]\label{cor:bary_continuity}
		Take a sequences of measures $\{\mu_i^n\}_{n \in \nset} \subset \Prsp{X}$ and weights $\{\alpha_i^n\}_{n \in \nset} \subset [0, +\infty)$ such that $\mu_i^n \xrightarrow{J} \mu_i$, $\alpha_i^n \to \alpha_i$ for $1 \le i \le m$. Then, if all $\mu_i \in E_0$, the sequence of Fr{\'e}chet barycenters $\{\bary_G(\mu_i^n, \alpha_i^n)_{1 \le i \le m}\}_{n \in \nset}$ is precompact and every its partial limit belongs to $\Bary_G(\mu_i, \alpha_i)_{1 \le i \le m}$.
	\end{corollary}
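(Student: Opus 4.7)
The plan is to realize the empirical barycenters as (unconstrained) Fr{\'e}chet barycenters of normalized discrete probability distributions on $\Prsp{X}$ and to reduce the statement to Theorem~\ref{thm:bary_consistency}. First I would set $s_n := \sum_{i=1}^m \alpha_i^n$ and $s := \sum_{i=1}^m \alpha_i$; excluding the trivial case $s = 0$, one has $s > 0$ and $s_n > 0$ eventually, so the normalized weights $\lambda_i^n := \alpha_i^n/s_n$ and $\lambda_i := \alpha_i/s$ are well defined and $\lambda_i^n \to \lambda_i$. Because the barycenter problem is invariant under positive rescaling of weights,
\[\Bary_{\mathcal{G}}(\mu_i^n, \alpha_i^n)_{1 \le i \le m} = \Bary_{\mathcal{G}}(P_n), \qquad \Bary_{\mathcal{G}}(\mu_i, \alpha_i)_{1 \le i \le m} = \Bary_{\mathcal{G}}(P),\]
where $P_n := \sum_i \lambda_i^n \delta_{\mu_i^n}$ and $P := \sum_i \lambda_i \delta_{\mu_i}$. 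Since each $\mu_i \in E(\delta_{x_0})$ and $\mu_i^n \xrightarrow{J} \mu_i$, Lemma~\ref{lem:general_weak_triangle} yields $\sup_n J(\delta_{x_0}, \mu_i^n) < \infty$, so $P_n$ (for $n$ large) and $P$ both belong to $\mathcal{P}\bigl(E(\delta_{x_0})\bigr)$.

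Next I would show $P_n \xrightarrow{\mathcal{J}} P$ by writing down an explicit transport plan. Put $\delta_n := \sum_i (\lambda_i^n - \lambda_i)_+$, which coincides with $\sum_i (\lambda_i - \lambda_i^n)_+$ and tends to zero. Consider the plan placing mass $\min(\lambda_i^n, \lambda_i)$ at each diagonal pair $(\mu_i^n, \mu_i)$ and redistributing the leftover mass $\delta_n$ via any discrete coupling $\sum_{i,j} p_{ij}^n \delta_{(\mu_i^n, \mu_j)}$ of the two residual marginals. The diagonal cost $\sum_i \min(\lambda_i^n, \lambda_i) J(\mu_i^n, \mu_i)$ tends to $0$ since each $J(\mu_i^n, \mu_i) \to 0$ by hypothesis; the residual cost is bounded above by $\delta_n \cdot \max_{i,j} J(\mu_i^n, \mu_j)$, and Lemma~\ref{lem:general_weak_triangle} combined with $\sup_n J(\delta_{x_0}, \mu_i^n) < \infty$ makes this maximum uniformly bounded in $n$, so the residual cost also vanishes. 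Hence $\mathcal{J}(P_n, P) \to 0$.

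Finally, the existence of the empirical barycenters on the left-hand side implies, via Theorem~\ref{thm:bary_existence} and the weak triangle inequality, that $\inf_{\nu \in \mathcal{G}} \int J(\mu,\nu)\diff P(\mu) < \infty$, so the limit problem also admits a constrained Fr{\'e}chet barycenter. Theorem~\ref{thm:bary_consistency} applied to the convergence $P_n \xrightarrow{\mathcal{J}} P$ then produces precompactness of the sequence of empirical barycenters and the fact that every partial limit belongs to $\Bary_{\mathcal{G}}(P)$, which is precisely the desired conclusion. The main obstacle is the second step, namely producing a coupling between $P_n$ and $P$ whose $J$-cost vanishes: this hinges on the uniform boundedness of the pairwise costs $J(\mu_i^n, \mu_j)$, which, while not immediate, follows by combining the weak triangle inequality with the transportation convergence $\mu_i^n \xrightarrow{J} \mu_i$ to the base points $\mu_i \in E(\delta_{x_0})$.
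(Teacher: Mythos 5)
Your proposal is correct and follows essentially the same route as the paper: encode the data as discrete distributions $P_n$, $P$ on $E(\delta_{x_0})$, couple them by placing mass $\min\{\alpha_i^n,\alpha_i\}$ on the diagonal pairs $(\mu_i^n,\mu_i)$ and bounding the residual cost by $\max_{i,j}J(\mu_i^n,\mu_j)\sum_i|\alpha_i^n-\alpha_i|$, conclude $\mathcal{J}(P_n,P)\to 0$, and invoke Theorem~\ref{thm:bary_consistency}. The only (harmless) divergences are that you normalize the weights so that $P_n,P$ are genuine probability measures and verify the existence hypothesis for $\bary_{\mathcal{G}}(P)$ explicitly, and that you bound $\max_{i,j}J(\mu_i^n,\mu_j)$ via the weak triangle inequality through $\delta_{x_0}$ where the paper cites continuity of $J$; these are refinements of, not departures from, the paper's argument.
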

	
	\begin{proof}
		Let us consider $P_n \eqset \sum_i \alpha_i^n \delta_{\mu_i^n}$, $P \eqset \sum_i \alpha_i \delta_{\mu_i}$, so $\Bary_G(\mu_i^n, \alpha_i^n)_{1 \le i \le m} = \Bary_G(P_n)$ and $\Bary_G(\mu_i, \alpha_i)_{1 \le i \le m} = \Bary_G(P)$.  Notice that $J(\mu_i^n, \mu_j) \to J(\mu_i, \mu_j)$ and $\max_{i, j} J(\mu_i, \mu_j) < \infty$ hence
		\[
		\mathcal{J}(P_n, P) \le \sum_{i = 1}^m \min\{\alpha_i^n, \alpha_i\} J(\mu_i^n, \mu_i) + \max_{i, j} J(\mu_i^n, \mu_j) \sum_{i = 1}^m |\alpha_i^n - \alpha_i| \to 0.
		\]
		This shows that the conditions of Theorem~\ref{thm:bary_consistency} hold.
	\end{proof}
	
	\begin{corollary}[law of large numbers]\label{cor:empirical_bary_consistency}
		Let $\left\{\bm\mu_n\right\}_{n \in \mathbb{N}} \subset E_0$ be a sequence of i.i.d.\ random elements with distribution $P_\mu$ such that there exists $\bary_G(P_\mu)$, and $\overline{\bm\mu}_n \in \Bary_G(\bm\mu_i, 1 / n)_{1 \le i \le n}$ be a measurable choice of empirical Fr{\'e}chet barycenters. Then the sequence $\{\overline{\bm\mu}_n\}_{n \in \nset}$ is precompact a.s.\ and every its partial limit is a barycenter of the distribution $P_\mu$.
	\end{corollary}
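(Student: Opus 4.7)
The approach is to reduce the claim to Theorem~\ref{thm:bary_consistency} applied to the empirical distributions $P_n := \frac{1}{n}\sum_{i = 1}^n \delta_{\bm\mu_i}$, which satisfy $\overline{\bm\mu}_n \in \Bary_{\mathcal{G}}(P_n)$ by the remark following Definition~\ref{def:empirical_barycenter}. It therefore suffices to show that $P_n \xrightarrow{\mathcal{J}} P_\mu$ almost surely in $\mathcal{P}\bigl(E(\delta_{x_0})\bigr)$; the precompactness of $\{\overline{\bm\mu}_n\}_{n \in \nset}$ and the identification of its partial limits as elements of $\Bary_{\mathcal{G}}(P_\mu)$ then follow immediately from that theorem.

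To obtain $\mathcal{J}$-convergence I plan to invoke the criterion of Theorem~\ref{thm:criterion} in the Monge--Kantorovich space built over the Radon space $\bigl(E(\delta_{x_0}), \tau_J\bigr)$ with cost function $J$, which is legitimate by the remark preceding Theorem~\ref{thm:bary_consistency}. This requires (i) weak convergence $P_n \rightharpoonup P_\mu$ almost surely in $\mathcal{P}\bigl(E(\delta_{x_0})\bigr)$, and (ii) convergence of $\int J(\delta_{x_0}, \nu)\diff P_n(\nu)$ to a finite limit $\int J(\delta_{x_0}, \nu)\diff P_\mu(\nu)$. For (i), since $\bigl(E(\delta_{x_0}), \tau_J\bigr)$ is separable by Lemma~\ref{lem:separable} and metrizable by Lemma~\ref{lem:metrizable}, Varadarajan's theorem on empirical measures over a separable metric space yields $P_n \rightharpoonup P_\mu$ almost surely.

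For (ii), let $\nu^* := \bary_{\mathcal{G}}(P_\mu)$, which exists by hypothesis, so that $\est J(\bm\mu, \nu^*) < \infty$. Since $P_\mu$-almost every $\bm\mu$ lies in $E(\delta_{x_0})$, we may fix some $\mu$ with $J(\mu, \delta_{x_0}) < \infty$ and $J(\mu, \nu^*) < \infty$; then Lemma~\ref{lem:general_weak_triangle} (with $\lambda = \mu$) gives $J(\delta_{x_0}, \nu^*) < \infty$, i.e.\ $\nu^* \in E(\delta_{x_0})$. Applying Lemma~\ref{lem:general_weak_triangle} once more and taking expectations yields
\[\est J(\delta_{x_0}, \bm\mu) \le A + B\bigl(J(\delta_{x_0}, \nu^*) + \est J(\bm\mu, \nu^*)\bigr) < \infty.\]
The classical Kolmogorov strong law of large numbers applied to the real-valued i.i.d.\ sequence $\bigl\{J(\delta_{x_0}, \bm\mu_i)\bigr\}_{i \in \nset}$ then gives
\[\int J(\delta_{x_0}, \nu)\diff P_n(\nu) = \frac{1}{n}\sum_{i = 1}^n J(\delta_{x_0}, \bm\mu_i) \xrightarrow{\text{a.s.}} \est J(\delta_{x_0}, \bm\mu) = \int J(\delta_{x_0}, \nu)\diff P_\mu(\nu) < \infty,\]
establishing (ii). Combining (i) and (ii) via Theorem~\ref{thm:criterion} gives $P_n \xrightarrow{\mathcal{J}} P_\mu$ almost surely, and Theorem~\ref{thm:bary_consistency} closes the argument.

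The main delicate point is the two-level nature of the argument: Theorem~\ref{thm:criterion} must be applied not to $\Prsp{X}$ with cost $c$, but to $\mathcal{P}\bigl(E(\delta_{x_0})\bigr)$ with cost $J$ and topology $\tau_J$ on the base space. Everything hinges on this being legitimate, which rests on Theorem~\ref{thm:radon} and on the observation (already in the paper) that $J$ itself satisfies Assumption~\ref{asm:general_growth_condition} and the consistency of a cost function. Once this is in place, together with the measurability of the selection $\overline{\bm\mu}_n$ granted by hypothesis, the remaining steps are routine applications of previously established results.
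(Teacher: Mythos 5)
Your proposal is correct and follows essentially the same route as the paper: reduce to Theorem~\ref{thm:bary_consistency} by establishing $\bm P_n \xrightarrow{\mathcal{J}} P_\mu$ a.s.\ via Varadarajan-type weak convergence of empirical measures plus the classical strong law applied to an i.i.d.\ real sequence, combined through Theorem~\ref{thm:criterion}. The only (immaterial) difference is the reference measure in condition~(3) of the criterion: you use $\delta_{\delta_{x_0}}$, which costs you the extra weak-triangle-inequality step to check $\est J(\delta_{x_0}, \bm\mu) < \infty$, whereas the paper uses $\delta_{\nu^*}$ and reads finiteness of $\est J(\bm\mu, \nu^*)$ directly off the hypothesis.
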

	
	\begin{proof}
		Let us consider empirical measures $\bm{P}_n \eqset \frac1n \sum_{i = 1}^n \delta_{\bm\mu_i}$. Obviously, the empirical barycenter $\bm\nu_n \eqset \bary_G(\bm\mu_i, 1 / n)_{1 \le i \le n} = \bary_G(\bm{P}_n)$. By the law of large numbers
		\[
		\mathcal{J}(\bm{P}_n, \delta_{\nu^*}) = \frac1n \sum_i J(\bm\mu_i, \nu^*) \to \E J(\bm\mu, \nu^*) = \mathcal{J}(P_\mu, \nu^*) < \infty \text{ a.s.},
		\]
		and $\bm{P}_n \rightharpoonup P_\mu$ since the topology of the weak convergence in $\Prsp{E_0}$ has a countable basis due to separability of $E_0$. Then by Theorem~\ref{thm:criterion} $\bm{P}_n \xrightarrow{\mathcal{J}} P_\mu$ almost surely, i.e.\ the conditions of the theorem hold.
	\end{proof}
	
	Actually, the results just proved also hold for an arbitrary equivalence class under some additional assumption: let for any $\epsilon > 0$ there exist constants $A_\epsilon, C_\epsilon > 0$ such that
	\begin{align}\label{eq:strong_growth_condition}
		c(x, y) &\le A_\epsilon + (1 + \epsilon) c(x, z) + C_\epsilon c(y, z),\\
		c(x, y) &\le A_\epsilon + (1 + \epsilon) c(z, y) + C_\epsilon c(z, x),
	\end{align}
	for all $x, y, z \in X$. This condition is stronger than that in Assumption~\ref{asm:general_growth_condition}, but they coincide i.e.\ in Euclidean case with convex cost function considered in section~\ref{sec:euclidean}  (Corollary~\ref{cor:vector_measure_weak_triangle}). Under such an assumption Theorem~\ref{thm:bary_consistency} holds for any equivalence class  $E(\mu_0)$.
	
	Notice that all the statements in this section also hold for the space \Prsp{X} instead of $\mathcal{P}\bigl(\Prsp{X}\bigr)$ because one can identify a point $x \in X$ with a Dirac measure $\delta_x \in \Prsp{X}$ so that $J(\delta_x, \delta_y) = c(x, y)$ for all $x, y \in X$.


\end{document}